\documentclass[11pt]{article}

\usepackage[margin=1.25in]{geometry}

\usepackage{amssymb,amsmath,bm}
\usepackage{xcolor}
\usepackage{amsthm}
\usepackage{hyperref}
\usepackage{mathrsfs}
\usepackage{textcomp}

\allowdisplaybreaks[1]

\usepackage{soul}
\soulregister\cite7 
\soulregister\citep7 
\soulregister\citet7 
\soulregister\ref7 
\soulregister\pageref7 
\soulregister\eqref7

\hypersetup{
    colorlinks,%
    citecolor=blue,%
    filecolor=blue,%
    linkcolor=blue,%
    urlcolor=black
}

\usepackage{verbatim}

\usepackage{sectsty}

\makeatletter

\newdimen\bibspace
\renewenvironment{thebibliography}[1]{%
 \section*{\refname 
       \@mkboth{\MakeUppercase\refname}{\MakeUppercase\refname}}%
     \list{\@biblabel{\@arabic\c@enumiv}}%
          {\settowidth\labelwidth{\@biblabel{#1}}%
           \leftmargin\labelwidth
           \advance\leftmargin\labelsep
           \itemsep\bibspace
           \parsep\z@skip     %
           \@openbib@code
           \usecounter{enumiv}%
           \let\p@enumiv\@empty
           \renewcommand\theenumiv{\@arabic\c@enumiv}}%
     \sloppy\clubpenalty4000\widowpenalty4000%
     \sfcode`\.\@m}
    {\def\@noitemerr
      {\@latex@warning{Empty `thebibliography' environment}}%
     \endlist}

\makeatother

\makeatletter

\newtheorem{thm}{Theorem}[section]
\newtheorem{lem}[thm]{Lemma}
\newtheorem{prop}[thm]{Proposition}

\def\XXint#1#2#3{{\setbox0=\hbox{$#1{#2#3}{\int}$}
  \vcenter{\hbox{$#2#3$}}\kern-.5\wd0}}

\newcommand{\al}{\alpha}                \newcommand{\lda}{\lambda}
\newcommand{\om}{\Omega}                \newcommand{\pa}{\partial}
\newcommand{\va}{\varepsilon}           \newcommand{\ud}{\mathrm{d}}
\newcommand{\be}{\begin{equation}}      \newcommand{\ee}{\end{equation}}

\newcommand{\Lda}{\Lambda}              \newcommand{\V}{\mathscr{V}}
\newcommand{\R}{\mathbb{R}}              
  \newcommand{\A}{\mathscr{A}}

\newcommand{\loc}{\text{loc}}

\begin{document}

\title{\textbf{{\L}ojasiewicz--Simon inequalities near bubbling
configurations  for the Yamabe functional  on bounded domains}
\bigskip}

\author{\medskip  Tianling Jin\footnote{T. Jin is partially supported by NSFC grant 12122120, and Hong Kong RGC grants GRF 16304125, GRF 16303624  and GRF 16303822.}, \  \
Jingang Xiong\footnote{J. Xiong is partially supported by NSFC grant 12325104.}, \  \ Ning Zhou\footnote{N. Zhou is partially supported by NSFC grant 12601198.}}

\date{\today}

\maketitle

\begin{abstract}
We establish {\L}ojasiewicz--Simon type gradient inequalities for the Yamabe functional on bounded smooth domains near configurations consisting of finitely many concentrating bubbles, with or without a regular component. A weighted decomposition separates the finite-dimensional spectral and bubble parameters from an infinite-dimensional coercive remainder. We obtain quantitative estimates for the parameters, the remainder, and the corresponding energy
gaps, in both the regular-plus-bubbling and the pure-bubbling regimes. These inequalities quantify the deviation from such surfaces and are essential for studying the dynamics of related parabolic flows.

\medskip
\noindent{\it Keywords}:   {\L}ojasiewicz--Simon inequalities, Yamabe functional, bubbling.
\medskip

\noindent {\it MSC (2010)}: Primary 35B38; Secondary 35B44, 35J20.

\end{abstract}

\section{Introduction}

Let $\Omega\subset\mathbb{R}^n$ be an open set, let
$f:\Omega\to\mathbb{R}$ be real analytic, and suppose that
$x_0\in\Omega$ is a critical point of $f$. In his seminal work
\cite{L}, {\L}ojasiewicz proved that there exist constants
$\gamma\in(0,1]$, $C>0$, and $\delta>0$ such that
$B_\delta(x_0)\subset\Omega$ and
\[
 |f(x)-f(x_0)|
 \leq C|\nabla f(x)|^{1+\gamma},
 \qquad x\in B_\delta(x_0).
\]
Simon \cite{Simon} developed an infinite-dimensional analogue of
this gradient inequality for a class of analytic functionals arising
from elliptic variational problems. The resulting
{\L}ojasiewicz--Simon inequality has proved to be a powerful tool for understanding the asymptotic behavior of nonlinear evolution
equations. In particular, such inequalities play an important role
in determining convergence rates for Yamabe-type flows; see, for
example, Brendle \cite{Br05} and
Carlotto--Chodosh--Rubinstein \cite{CCR}. Part of the motivation for the present work comes from the the study of Sobolev-critical fast diffusion in bounded domains, where related bubbling dynamics were studied in \cite{JX20-a,JX24}.

We consider the Sobolev quotient
\[
 Y_\Omega(\xi)
 :=
 \frac{\displaystyle\int_\Omega |\nabla\xi|^2\,\ud x}
 {\left(\displaystyle\int_\Omega|\xi|^{\frac{2n}{n-2}}\,\ud x
 \right)^{\frac{n-2}{n}}},
 \qquad
 \xi\in H_0^1(\Omega)\setminus\{0\},
\]
where $\Omega\subset\mathbb{R}^n$ is a bounded smooth domain,
$n\geq3$. Here $H_0^1(\Omega)$ denotes the closure of
$C_c^\infty(\Omega)$ in $H^1(\Omega)$, equipped with the equivalent
norm
\[
 \|\xi\|
 :=
 \left(\int_\Omega|\nabla\xi|^2\,\ud x\right)^{1/2}.
\]

The classical abstract {\L}ojasiewicz--Simon theory cannot be applied
directly to $Y_\Omega$ for two reasons. First, the functional
$Y_\Omega$ is not, in general, real analytic on
$H_0^1(\Omega)\setminus\{0\}$. Second, the functional may have no
nontrivial critical point. Indeed, a critical point on the normalized
slice satisfies an equation of the form
\be\label{eq:EL}
 \begin{cases}
 -\Delta S
 =
 r_\infty |S|^{\frac{4}{n-2}}S
 &\mbox{in }\Omega,\\
 S=0
 &\mbox{on }\partial\Omega,
 \end{cases}
\ee
where $r_\infty>0$. If $\Omega$ is star-shaped, the Pohozaev
identity rules out nontrivial $H_0^1(\Omega)$ weak solutions of
\eqref{eq:EL}; see \cite{P,BrezisKato}.

To overcome the first difficulty, fix $p>n$ and restrict $Y_\Omega$ to the open cone
\[
 \mathcal D
 :=
 \left\{
 v\in  W^{2,p}(\Omega)\cap W_0^{1,p}(\Omega):
 0<\inf_{x\in\Omega}\frac{v(x)}{d(x)}
 \leq
 \sup_{x\in\Omega}\frac{v(x)}{d(x)}
 <\infty
 \right\},
\]
where
\[
 d(x):=\operatorname{dist}(x,\partial\Omega).
\]
Following the argument of Feireisl-Simondon \cite{FS}, for every
$v\in\mathcal D$ there exists an open neighborhood
$U(v)\subset W^{2,p}(\Omega)\cap W_0^{1,p}(\Omega)$ such that $Y_\Omega$ is real analytic on $U(v)$.

The second difficulty is related to the loss of compactness at the
critical Sobolev exponent. Define
\[
 U_{a,\lambda}(x)
 :=
 [n(n-2)]^{\frac{n-2}{4}}
 \left(
 \frac{\lambda}
 {1+\lambda^2|x-a|^2}
 \right)^{\frac{n-2}{2}},
 \qquad
 a\in\mathbb{R}^n,\quad \lambda>0.
\]
Then
\[
 -\Delta U_{a,\lambda}=U_{a,\lambda}^{\frac{n+2}{n-2}}
 \qquad\mbox{in }\mathbb{R}^n.
\]
Let
\[
 PU_{a,\lambda}:=U_{a,\lambda}-h_{a,\lambda},
\]
where $h_{a,\lambda}$ is the harmonic extension of the boundary
values of $U_{a,\lambda}$:
\[
 \begin{cases}
 \Delta h_{a,\lambda}=0
 &\mbox{in }\Omega,\\
 h_{a,\lambda}=U_{a,\lambda}
 &\mbox{on }\partial\Omega.
 \end{cases}
\]
Equivalently,
\[
 \begin{cases}
 -\Delta PU_{a,\lambda}=U_{a,\lambda}^{\frac{n+2}{n-2}}
 &\mbox{in }\Omega,\\
 PU_{a,\lambda}=0
 &\mbox{on }\partial\Omega.
 \end{cases}
\]
By the classification theorem of Caffarelli-Gidas-Spruck
\cite{CGS}, the functions
\[
 r_\infty^{-\frac{n-2}{4}}U_{a,\lambda},
 \qquad a\in\mathbb{R}^n,\quad\lambda>0,
\]
are precisely all the positive solutions of
\[
 -\Delta U=r_\infty U^{\frac{n+2}{n-2}}
 \qquad\mbox{in }\mathbb{R}^n.
\]
We refer to $U_{a,\lambda}$ and $PU_{a,\lambda}$ as bubbles centered
at $a$, with concentration parameter $\lambda$. Let
\[
 S_n
 :=
 \inf\left\{
 \int_{\mathbb{R}^n}|\nabla\eta|^2\,\ud x:
 \eta\in C_c^\infty(\mathbb{R}^n),\quad
 \int_{\mathbb{R}^n}|\eta|^{\frac{2n}{n-2}}\,\ud x=1
 \right\}
\]
be the optimal Sobolev constant. With the normalization above,
\[
 \int_{\mathbb{R}^n}|\nabla U_{a,\lambda}|^2\,\ud x
 =
 \int_{\mathbb{R}^n}U_{a,\lambda}^{\frac{2n}{n-2}}\,\ud x
 =
 S_n^{\frac n2}.
\]
Motivated by Struwe's global compactness theorem \cite{St}, we study
functions close to configurations of the form
\[
 u_\infty
 +
 r_\infty^{-\frac{n-2}{4}}
 \sum_{i=1}^{\ell}PU_{a_i,\lambda_i},
\]
where $\ell\geq0$, and either $u_\infty\equiv0$ or
$u_\infty\in\mathcal D$ is a solution of \eqref{eq:EL}. We exclude
the case $u_\infty\equiv0$ and $\ell=0$. As usual, when $\ell=0$,
the bubble part is absent. Quantitative versions of Struwe's decomposition have 
been obtained by Ciraolo--Figalli--Maggi \cite{CFM}, Figalli--Glaudo \cite{FG2020} and Deng--Sun--Wei \cite{DSW}. A related result was obtained by Malchiodi--Rupflin--Sharp
\cite{MRS}, who established {\L}ojasiewicz inequalities near simple
bubble trees for the $H$-functional, whereas we study the Dirichlet
Yamabe functional near finitely many interacting bubbles, possibly
with a nontrivial regular component.

Throughout the paper, $u_\infty$, $\ell$, and $r_\infty$ are assumed
to satisfy the compatibility relation
\be\label{eq:r-infty-level}
 r_\infty
 =
 \left(
 Y_\Omega(u_\infty)^{\frac n2}
 +
 \ell S_n^{\frac n2}
 \right)^{\frac2n},
\ee
where we use the convention $Y_\Omega(u_\infty)=0$ when
$u_\infty\equiv0$. 

For
\[
 \boldsymbol\alpha=(\alpha_1,\ldots,\alpha_\ell)\in\mathbb{R}^\ell,
 \qquad
 \boldsymbol\lambda=(\lambda_1,\ldots,\lambda_\ell)
 \in(0,\infty)^\ell, \qquad \boldsymbol a=(a_1,\ldots,a_\ell)\in\Omega^\ell,
\]
define the interaction parameters (see Br\'ezis-Coron \cite{BC})
\[
 \Lambda_{i,j}(\boldsymbol a,\boldsymbol\lambda)
 :=
 \left(
 \frac{\lambda_i}{\lambda_j}
 +
 \frac{\lambda_j}{\lambda_i}
 +
 \lambda_i\lambda_j|a_i-a_j|^2
 \right)^{\frac{2-n}{2}},
 \qquad i\neq j,
\]
and set $\Lambda_{i,i}=0$. We use the notation
\[
\bm\Lambda
:=
\sum_{i,j=1}^{\ell}
\Lambda_{i,j}(\bm a,\bm\lambda).
\]

We restrict attention to interior bubbling configurations for which
\be\label{eq:center-boundary}
 d(a_i)\geq\delta_0>0,
 \qquad 1\leq i\leq\ell,
\ee
where $\delta_0$ is fixed. As
\[
 \sum_{i=1}^{\ell}\lambda_i^{-1}
 +
 \sum_{i=1}^{\ell}|\alpha_i-1|
 +
\bm{\Lambda}
 \longrightarrow0,
\]
one has
\[
 Y_\Omega\left(
 u_\infty
 +
 r_\infty^{-\frac{n-2}{4}}
 \sum_{i=1}^{\ell}\alpha_iPU_{a_i,\lambda_i}
 \right)
 \longrightarrow
 \left(
 Y_\Omega(u_\infty)^{\frac n2}
 +
 \ell S_n^{\frac n2}
 \right)^{\frac2n}
 =
 r_\infty.
\]

Since $Y_\Omega$ is invariant under multiplication by nonzero
constants, we work on the normalized slice
\be\label{eq:normalizedmass}
 \int_\Omega u^{\frac{2n}{n-2}}\,\ud x=1.
\ee

Suppose that $u\in\mathcal D$ lies in a sufficiently small
neighborhood of one of the above configurations. In Section~\ref{sec:2}, we
construct a finite-dimensional surface $\Sigma\subset H_0^1(\Omega)$
adapted to the bubble parameters and to the relevant spectral
directions of the linearized operator. This is a standard
finite-dimensional reduction framework; see, for example,
\cite{Bahri,Bahri-C,St,DruetHebeyRobertBlow-up2004}.

Let $\sigma\in\Sigma$ denote the projection of $u$ onto
$\Sigma$; see \eqref{eq:sigma11} and \eqref{eq:decomposition-1} for the
precise definitions. We write
\[
 u=\sigma+w,
 \qquad
 \sigma\in\Sigma,
\]
where $w$ satisfies the corresponding orthogonality conditions. The
surface is chosen so that the second variation is coercive in the
$w$-directions. The bubble component of $\sigma$ is parametrized by
$\boldsymbol\alpha$, $\boldsymbol\lambda$, and $\boldsymbol a$, and
\[
 \left\|
 \sigma-u_\infty
 -
 r_\infty^{-\frac{n-2}{4}}
 \sum_{i=1}^{\ell}\alpha_iPU_{a_i,\lambda_i}
 \right\|
\]
is small.

For a positive function $u\in\mathcal D$, define
\[
 \mathcal R_u
 :=
 -u^{-\frac{n+2}{n-2}}\Delta u
\]
and introduce the Euler--Lagrange residual at the limiting level
$r_\infty$,
\[
 \mathfrak G(u)
 :=
 \left\|
 \Delta u+r_\infty u^{\frac{n+2}{n-2}}
 \right\|_{L^{\frac{2n}{n+2}}(\Omega)}.
\]
Since
\[
 \Delta u+r_\infty u^{\frac{n+2}{n-2}}
 =
 -(\mathcal R_u-r_\infty)u^{\frac{n+2}{n-2}},
\]
we also have
\be\label{eq:Gu}
 \mathfrak G(u)
 =
 \left(
 \int_\Omega
 |\mathcal R_u-r_\infty|^{\frac{2n}{n+2}}
 u^{\frac{2n}{n-2}}\,\ud x
 \right)^{\frac{n+2}{2n}}.
\ee

The main purpose of this paper is to establish gradient and
energy-gap estimates near the almost-critical surfaces described
above. The precise projection estimates are stated in
Theorem~\ref{thm:LS}, and their energy consequences are collected in
Section~\ref{sec:energy-gap}. We record here the principal
conclusions.

If $u_\infty\equiv0$, then
\[
\begin{aligned}
-C\mathfrak G(u)^2
-C\bm\Lambda
-C\sum_{k=1}^{\ell}\lambda_k^{2-n}
\leq{}&Y_\Omega(u)-r_\infty
\leq{}
C\mathfrak G(u)^2
+C\sum_{k=1}^{\ell}\lambda_k^{2-n}.
\end{aligned}
\]
If $u_\infty>0$, then
\[
\begin{aligned}
Y_\Omega(u)-r_\infty
\geq
-C\mathfrak G(u)-
\begin{cases}
\displaystyle
C\sum_{k=1}^{\ell}
\lambda_k^{-\frac{n+2}{4}}
(\ln\lambda_k)^{\frac{n+2}{2n}}+C\bm\Lambda^{\frac{n+2}{2(n-2)}}
\left(
\ln\bm\Lambda^{-1}
\right)^{\frac{n+2}{2n}},
\ n\geq6,\\[3mm]
\displaystyle
C\sum_{k=1}^{\ell}\lambda_k^{\frac{2-n}{2}}+C\bm\Lambda,
\quad n=3,4,5,
\end{cases}
\end{aligned}
\]
and
\[
Y_\Omega(u)-r_\infty
\leq C\mathfrak G(u)^{1+\gamma},
\]
where $\gamma\in(0,1]$ is a constant.

These results are of {\L}ojasiewicz--Simon type. The norm
$\mathfrak G(u)$ plays the role of the dual norm of the derivative
of $Y_\Omega$, but remains well adapted to the critical Sobolev
nonlinearity and to bubbling configurations.

The parameters and the orthogonal remainder are also
controlled by the residual $\mathfrak G(u)$. The precise estimates for the amplitude
parameters, the remainder $w$, and the regular-part parameters are
stated in Theorem~\ref{thm:LS}.

The paper is organized as follows. Section~\ref{sec:2} develops the
spectral decomposition, constructs the finite-dimensional regular
and bubble surfaces, and proves the raw bubble-energy expansion.
Section~\ref{sec:modulation} selects the modulation parameters,
records the orthogonality and coercivity properties, and establishes
auxiliary estimates and preliminary energy identities. Section~\ref{sec:gradient} proves the
coupled amplitude and remainder estimates of
Theorem~\ref{thm:LS}. Finally, Section~\ref{sec:energy-gap} derives the
energy-gap estimates from the preceding results.

\medskip

\textbf{Declaration on the Use of AI Tools.}
An initial version of this manuscript was prepared by the authors without the use of AI-assisted tools. The authors subsequently used AI-assisted tools to help check certain calculations, including some integral computations, and to polish the language. All mathematical statements, proofs, logical arguments, citations, and conclusions in the final manuscript were independently reviewed and verified by the authors. The authors take full responsibility for the content of the manuscript.

\section{Construction of finite-dimensional surfaces}\label{sec:2}

For $\xi,\eta\in H_0^1(\Omega)$, we define
\be\label{eq:innerproduct}
 \langle \xi,\eta\rangle
 :=
 \int_\Omega \nabla\xi\cdot\nabla\eta\,\ud x.
\ee
For every $u\in\mathcal D$, let
\[
 \mathcal L_u(\Omega)
 :=
 L^2\left(\Omega,u^{\frac4{n-2}}\,\ud x\right)
 =
 \left\{
 f\in L^2_{\loc}(\Omega):
 \int_\Omega f^2u^{\frac4{n-2}}\,\ud x<\infty
 \right\},
\]
equipped with the inner product
\[
 \langle f,g\rangle_{\mathcal L_u}
 :=
 \int_\Omega fgu^{\frac4{n-2}}\,\ud x.
\]

\subsection{Surface generated by a nontrivial critical point}

Suppose that $u_\infty$ is a nonnegative solution of
\eqref{eq:EL}. By the strong maximum principle, either
$u_\infty\equiv0$ or $u_\infty>0$ in $\Omega$. In the latter case,
elliptic regularity and the Hopf boundary lemma imply that
\[
 c\,d(x)\leq u_\infty(x)\leq C\,d(x)
 \qquad\mbox{in }\Omega
\]
for some constants $c,C>0$. In particular,
$u_\infty\in\mathcal D$.

In this subsection, we assume that $u_\infty\not\equiv0$. The operator
\[
f \longmapsto [- u_\infty^{-\frac{4}{n-2}}\Delta]^{-1}f
\]
is a bounded linear compact symmetric operator that maps $\mathcal{L}_{u_\infty}(\Omega)$ to itself. By the spectral theorem, there exist eigenfunctions
$\{\phi_l\}_{l\geq1}\subset H_0^1(\Omega)$ and eigenvalues
$\{\mu_l\}_{l\geq1}\subset(0,\infty)$ such that
\[
 0<\mu_1<\mu_2\leq\mu_3\leq\cdots,
 \qquad
 \mu_l\longrightarrow\infty,
\]
and
\be\label{eq:phil}
 \begin{cases}
 -\Delta\phi_l
 =
 \mu_lu_\infty^{\frac4{n-2}}\phi_l
 &\mbox{in }\Omega,\\
 \phi_l=0
 &\mbox{on }\partial\Omega.
 \end{cases}
\ee
The family $\{\phi_l\}_{l\geq1}$ is an orthonormal basis of
$\mathcal L_{u_\infty}(\Omega)$; in particular,
\be\label{eq:phiij}
 \int_\Omega
 \phi_i\phi_ju_\infty^{\frac4{n-2}}\,\ud x
 =
 \delta_{ij}.
\ee
By the regularity theory of linear elliptic equations, $\phi_l\in C^{2+\frac{4}{n-2}}(\overline\Omega)\cap C^{\infty}(\Omega)$ for every $\ell$.

Since
\[
 -\Delta u_\infty
 =
 r_\infty u_\infty^{\frac4{n-2}}\ u_\infty,
\]
and $u_\infty>0$, it follows from the characterization and simplicity
of the first eigenvalue that
\(
 \mu_1=r_\infty
\)
and
\(
 \phi_1
 =
 u_\infty
 \left(
 \int_\Omega u_\infty^{\frac{2n}{n-2}}\,\ud x
 \right)^{-\frac12}.
\)
Moreover,
\(
 \left\{
 \frac{\phi_l}{\sqrt{\mu_l}}:l\geq1
 \right\}
\)
is an orthonormal basis of $H_0^1(\Omega)$ with respect to the inner
product \eqref{eq:innerproduct}.

Let
\[
 L
 :=
 \max\left\{
 l\in\mathbb N:
 \mu_l\leq \frac{n+2}{n-2} r_\infty
 \right\}.
\]
For
$f\in L^p(\Omega)$, $1\leq p<\infty$, define
\be\label{eq:Pif}
 \Pi f
 :=
 f-
 \sum_{i=1}^L
 \left(
 \int_\Omega f\phi_i\,\ud x
 \right)
u_\infty^{\frac4{n-2}}\phi_i.
\ee
Then $\Pi$ is a bounded projection on $L^p(\Omega)$, and
\[
 \Pi(L^p(\Omega))
 =
 \left\{
 f\in L^p(\Omega):
 \int_\Omega f\phi_i\,\ud x=0,\quad 1\leq i\leq L
 \right\}.
\]
In particular, $\Pi(L^p(\Omega))$ is a closed subspace of
$L^p(\Omega)$.

The following estimates are from
Lemmas 4.7 and 4.8 of \cite{JX20-a}; see also Brendle \cite{Br05}.

\begin{lem}[Lemma 4.7 \& 4.8 of \cite{JX20-a}] \label{lem:4.7-8-jx}
(i) For every $1\le p<\infty$, we can find a constant $C$ depending only on $n,\Omega,p$ and $u_\infty$ such that
\[
\|f\|_{L^p(\om)} \le C \Big \|\Delta f+\frac{n+2}{n-2} r_\infty u_\infty^{\frac{4}{n-2}} f \Big\|_{L^p(\om)}+C \sup_{1\le l\le L} \Big|\int_\om u_\infty^{\frac{4}{n-2}} \phi_l f \,\ud x\Big|
\]
for all $f\in W^{2,p}(\Omega)\cap W^{1,p}_0(\Omega)$.

(ii) There exists a constant $C$ depending only on $n,\Omega$ and $u_\infty$ such that
\begin{itemize}
\item
\[
\|f\|_{L^{\frac{n+2}{n-2}}(\om)}\le C \Big \| \Pi(\Delta f+\frac{n+2}{n-2}r_\infty u_\infty^{\frac{4}{n-2}} f )\Big\|_{L^{\frac{n(n+2)}{n^2+4}}(\om)}+C \sup_{1\le l\le L} \Big|\int_\om u_\infty^{\frac{4}{n-2}} \phi_l f \,\ud x \Big|
\]
for all $f\in W^{2,\frac{n(n+2)}{n^2+4}}(\Omega)\cap W^{1,\frac{n(n+2)}{n^2+4}}_0(\Omega)$.
\item
\[
\|f\|_{L^{1}(\om)}\le C \Big \| \Pi(\Delta f+\frac{n+2}{n-2}r_\infty u_\infty^{\frac{4}{n-2}} f )\Big\|_{L^{1}(\om)}+C \sup_{1\le l\le L} \Big|\int_\om u_\infty^{\frac{4}{n-2}} \phi_l f \,\ud x \Big|
\]
for all $f\in W^{2,1}(\Omega)\cap W^{1,1}_0(\Omega)$.
\end{itemize}
\end{lem}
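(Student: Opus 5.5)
Both estimates will be proved by contradiction plus compactness, using the spectral decomposition above. Write $V:=u_\infty^{\frac4{n-2}}$, which satisfies $cd^{\frac4{n-2}}\le V\le Cd^{\frac4{n-2}}$ and is bounded. Set $\kappa:=\frac{n+2}{n-2}r_\infty$ and $T f:=\Delta f+\kappa Vf$. The kernel of $T$ on $H_0^1(\Omega)$ is spanned by those $\phi_l$ with $\mu_l=\kappa$. All such $l$ satisfy $l\le L$ by the definition of $L$. So the last term in each inequality, $\sup_{l\le L}|\int V\phi_l f|$, controls the component of $f$ in $\ker T$, and also its components in the finitely many directions with $\mu_l<\kappa$.

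\emph{Part (i).} Suppose (i) fails. Then there are $f_k\in W^{2,p}\cap W_0^{1,p}$ with $\|f_k\|_{L^p}=1$, $\|Tf_k\|_{L^p}\to0$ and $\int V\phi_lf_k\to0$ for $l\le L$. Since $V\in L^\infty$, we have $\|\Delta f_k\|_{L^p}\le \|Tf_k\|_{L^p}+C\|f_k\|_{L^p}$. Calder\'on--Zygmund estimates then bound $f_k$ in $W^{2,p}$. By compactness, a subsequence converges in $L^p$ and weakly in $W^{2,p}$ to some $f$ with $\|f\|_{L^p}=1$, $Tf=0$ and $\int V\phi_l f=0$ for $l\le L$. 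Bootstrapping places $f$ in $H_0^1$, so $f$ is a combination of the $\phi_l$ with $\mu_l=\kappa$. Hence $f=0$, a contradiction. For $p=1$, $W^{2,1}$ estimates fail, so I would run the same argument through duality instead, as described next.

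\emph{Part (ii).} Here only $\Pi Tf$ is controlled, not $Tf$. Decompose $Tf=\Pi Tf+\sum_{i\le L}c_iV\phi_i$ with $c_i=\int (Tf)\phi_i$. Integrating by parts gives $c_i=\int f(\Delta\phi_i+\kappa V\phi_i)=(\kappa-\mu_i)\int V\phi_i f$, since $\phi_i=0$ on $\partial\Omega$ and $f\in W_0^{1,q}$. So $|c_i|\le C\sup_{l\le L}|\int V\phi_lf|$, and $\|Tf\|_{L^q}\le C\|\Pi Tf\|_{L^q}+C\sup_l|\int V\phi_lf|$ because $V\phi_i\in L^\infty$. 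Both bullets therefore reduce to bounds of the form $\|f\|_{L^r}\le C\|Tf\|_{L^q}+C\sup_l|\int V\phi_l f|$. This is where the main work lies.

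\emph{Main obstacle: the mixed exponents.} For $(r,q)=(\frac{n+2}{n-2},\frac{n(n+2)}{n^2+4})$, Sobolev embedding gives $W^{2,q}\hookrightarrow L^{r}$, since $\frac1q-\frac2n=\frac{n-2}{n+2}$. So the same contradiction argument applies. One uses the $W^{2,q}$ bound from $\Delta f_k=Tf_k-\kappa Vf_k$, first controlling $\|Vf_k\|_{L^q}\le C\|f_k\|_{L^r}$ by H\"older, and then the compact embedding $W^{2,q}\hookrightarrow L^{s}$ for $s<r$. The compactness step is the delicate point: the normalization sits in the borderline $L^r$ norm. I would handle this by first proving the estimate with $\|f\|_{L^1}$ as the error term, and then absorbing the error via the $W^{2,q}\to L^r$ a priori bound.

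For $p=1$ I would use duality. Given $g\in L^\infty$, solve $T\psi=\Pi^* g$ with $\psi\perp\ker T$, which is possible by Fredholm theory. Then part (i) with large $p$ gives $\|\psi\|_{L^\infty}\le C\|\psi\|_{W^{2,p}}\le C\|g\|_{L^\infty}$. Testing $f$ against $\Pi^*g$ and integrating by parts gives $\int f\,\Pi^*g=\int (\Pi Tf)\psi$, up to terms controlled by the $\phi_l$-moments. Taking the supremum over $g$ then yields the $L^1$ bound.
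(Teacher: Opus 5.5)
Your treatment of (i) for $1<p<\infty$ and of the first bullet of (ii) is correct. It is the standard argument behind the cited lemmas.
\begin{itemize}
\item For (i): contradiction, Calder\'on--Zygmund estimates and compactness.
\item For (ii): the identity $\int_\Omega (Tf)\phi_i\,\ud x=(\kappa-\mu_i)\int_\Omega V\phi_i f\,\ud x$ lets you pass from $\Pi Tf$ to $Tf$.
\item The borderline exponent is not delicate. Since $q=\frac{n(n+2)}{n^2+4}>1$, apply (i) with $p=q$, and then $\|f\|_{L^{\frac{n+2}{n-2}}(\Omega)}\le C\|f\|_{W^{2,q}(\Omega)}\le C\|Tf\|_{L^q(\Omega)}+C\|f\|_{L^q(\Omega)}$.
\end{itemize}

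The step that fails as written is the $L^1$ duality. You rely on it both for (i) with $p=1$ and for the second bullet of (ii).

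\textbf{Solvability fails.} The adjoint is $\Pi^*g=g-\sum_{i\le L}(\int_\Omega V\phi_i g\,\ud x)\phi_i$. It is orthogonal to the $\phi_l$ only in the weighted inner product. The range of the formally self-adjoint operator $T$, however, consists of those $h$ with $\int_\Omega h\phi_l\,\ud x=0$ (unweighted) for every $l$ with $\mu_l=\kappa$. For instance, if $\mu_L=\kappa$ and $g=V\phi_L$, then $\int_\Omega \Pi^*g\,\phi_L\,\ud x=1-\int_\Omega\phi_L^2\,\ud x$, which need not vanish. So $T\psi=\Pi^*g$ is in general unsolvable precisely when $\ker T\neq\{0\}$, which is the degenerate case the lemma must handle.

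\textbf{The error term is uncontrolled.} Independently of solvability, $\int_\Omega fg\,\ud x-\int_\Omega f\,\Pi^*g\,\ud x=\sum_{i\le L}(\int_\Omega V\phi_i g\,\ud x)\int_\Omega f\phi_i\,\ud x$. This involves the unweighted moments $\int_\Omega f\phi_i\,\ud x$, which the right-hand side of the lemma does not control.

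\textbf{The fix is to test with $\Pi g$ instead.}
\begin{itemize}
\item By the characterization of $\Pi(L^p(\Omega))$, $\int_\Omega \Pi g\,\phi_i\,\ud x=0$ for all $i\le L$. So there is $\psi\in W^{2,p}(\Omega)\cap W^{1,p}_0(\Omega)$, $p>n$, with $T\psi=\Pi g$ and $\int_\Omega V\phi_l\psi\,\ud x=0$ whenever $\mu_l=\kappa$.
\item For $\mu_l<\kappa$ you get $\int_\Omega V\phi_l\psi\,\ud x=(\kappa-\mu_l)^{-1}\int_\Omega \Pi g\,\phi_l\,\ud x=0$ automatically.
\item Hence your part (i) gives $\|\psi\|_{L^\infty(\Omega)}\le C\|g\|_{L^\infty(\Omega)}$, and also $\Pi^*\psi=\psi$.
\item Therefore
\[
\int_\Omega fg\,\ud x=\int_\Omega (\Pi Tf)\,\psi\,\ud x+\sum_{i\le L}\Big(\int_\Omega g\phi_i\,\ud x\Big)\int_\Omega V\phi_i f\,\ud x .
\]
\end{itemize}
This gives the $L^1$ bound in (ii) directly. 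Since $\Pi$ is bounded on $L^1$, it also gives (i) for $p=1$.

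Alternatively, (i) for $p=1$ follows from your contradiction argument itself. Replace the unavailable $W^{2,1}$ estimates by the compactness of the Dirichlet Green operator $L^1(\Omega)\to W^{1,s}_0(\Omega)\subset\subset L^1(\Omega)$, $s<\frac{n}{n-1}$, then bootstrap the limit into $H_0^1(\Omega)$ as before.
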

By the Lyapunov-Schmidt reduction, we have the following result.

\begin{lem}[Adaptation of Lemma 4.9 of \cite{JX20-a}]\label{lem:ift}
There exists $\delta_1>0$ such that, for every
$\bm z=(z_1,\dots,z_L)\in\mathbb R^L$ with
$|\bm z|\leq\delta_1$, there exists
$\xi_{\bm z}\in C^{\frac{3n-2}{n-2}}(\overline\Omega)$ satisfying
$1/2\leq\xi_{\bm z}/u_\infty\leq2$ in $\Omega$,
\[
\int_{\om} u_\infty^{\frac{4}{n-2}} (\xi_{\bm{z}}- u_\infty) \phi_l\,\ud x= z_l, \quad l=1,\dots, L,
\]
and
\begin{equation}\label{eq:projection0}
\Pi(\Delta \xi_{\bm{z}}+ r_\infty\xi_{\bm{z}} ^{\frac{n+2}{n-2}})=0.
\end{equation}
Furthermore, the map $\bm z\mapsto\xi_{\bm z}$ is real
analytic as a map into $W^{2,p}(\Omega)\cap W_0^{1,p}(\Omega)$,
and
\[
\left.\frac{\partial\xi_{\bm z}}{\partial z_l}\right|_{\bm z=0}
=\phi_l,
\qquad 1\leq l\leq L.
\]
\end{lem}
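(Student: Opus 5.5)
We will construct $\xi_{\bm z}$ by a Lyapunov--Schmidt reduction, i.e.\ by applying the analytic implicit function theorem in the Banach space $X:=W^{2,p}(\Omega)\cap W_0^{1,p}(\Omega)$ with $p>n$. We look for $\xi_{\bm z}=u_\infty+\sum_{l=1}^L z_l\phi_l+\psi$, where $\psi$ lies in the closed subspace
\[
X^\perp:=\Big\{\psi\in X:\int_\Omega u_\infty^{\frac4{n-2}}\phi_l\psi\,\ud x=0,\ 1\le l\le L\Big\}.
\]
By \eqref{eq:phiij}, the constraint $\int_\Omega u_\infty^{\frac4{n-2}}(\xi_{\bm z}-u_\infty)\phi_l\,\ud x=z_l$ is then automatic. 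It remains to solve
\[
F(\bm z,\psi):=\Pi\Big(\Delta\xi+r_\infty|\xi|^{\frac4{n-2}}\xi\Big)=0,
\qquad \xi=u_\infty+\sum_l z_l\phi_l+\psi,
\]
for $\psi$. We regard $F$ as a map from a neighborhood of $(0,0)$ in $\mathbb R^L\times X^\perp$ into $Y:=\Pi(L^p(\Omega))$.

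\textbf{Step 1: analyticity.} Since $p>n$, $X\hookrightarrow C^1(\overline\Omega)$. For $\xi$ in a small $X$-ball around $u_\infty\in\mathcal D$, one has $\xi/d$ bounded between positive constants, as in \cite{FS}. Hence $\xi/u_\infty\in[1/2,2]$ once $|\bm z|+\|\psi\|_X$ is small, because $u_\infty\sim d$ and $\phi_l\in C^1$ vanishes on $\partial\Omega$, so $|\phi_l|\le Cd$. On this set we write $\xi^{\frac{n+2}{n-2}}=u_\infty^{\frac{n+2}{n-2}}(1+h)^{\frac{n+2}{n-2}}$ with $h=(\xi-u_\infty)/u_\infty$. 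The map $\xi\mapsto h$ is bounded linear into $L^\infty$ (a Hardy-type quotient by $d$, controlled by the $C^1$ norm), and $(1+h)^{\frac{n+2}{n-2}}$ is given by a convergent power series for $\|h\|_\infty<1$. Thus $F$ is real analytic from $\mathbb R^L\times X^\perp$ into $L^p$, and into $Y$ after applying the bounded projection $\Pi$. Moreover, $F(0,0)=0$ by \eqref{eq:EL}.

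\textbf{Step 2: invertibility of the linearization.} We have $D_\psi F(0,0)\psi=\Pi(\Delta\psi+\frac{n+2}{n-2}r_\infty u_\infty^{\frac4{n-2}}\psi)$. Injectivity on $X^\perp$ follows from Lemma~\ref{lem:4.7-8-jx}(i): if $\Pi(\cdots)=0$, then $\Delta\psi+\frac{n+2}{n-2}r_\infty u_\infty^{\frac4{n-2}}\psi$ lies in the span of $u_\infty^{\frac4{n-2}}\phi_i$, $i\le L$. Testing against $\phi_i$ and using \eqref{eq:phil} together with the orthogonality of $\psi$ shows the coefficients vanish, so the estimate gives $\psi=0$. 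For surjectivity, given $g\in Y$, we solve the equation spectrally. Expanding in $\{\phi_l\}$, the operator $-u_\infty^{-\frac4{n-2}}\Delta-\frac{n+2}{n-2}r_\infty$ has eigenvalues $\mu_l-\frac{n+2}{n-2}r_\infty\neq0$ for $l>L$. Hence there is a unique solution in the orthogonal complement, first in $H_0^1$, and it is upgraded to $X$ by $W^{2,p}$ elliptic estimates and a bootstrap, with the bound supplied by Lemma~\ref{lem:4.7-8-jx}(i). Fredholm-index reasoning also works: the operator is $\Delta$ plus a compact perturbation, composed with finite-rank corrections. We expect this step to be the main technical point, in particular checking that $u_\infty^{\frac4{n-2}}\psi\in L^p$ and that the codomain is exactly $Y$.

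\textbf{Step 3: conclusion.} The analytic implicit function theorem gives $\delta_1>0$ and an analytic map $\bm z\mapsto\psi(\bm z)\in X^\perp$ with $\psi(0)=0$ and $F(\bm z,\psi(\bm z))=0$, which is \eqref{eq:projection0}. Shrinking $\delta_1$ ensures $1/2\le\xi_{\bm z}/u_\infty\le2$, and hence positivity, so that $|\xi|^{\frac4{n-2}}\xi=\xi^{\frac{n+2}{n-2}}$. Differentiating $F(\bm z,\psi(\bm z))=0$ at $0$ gives $D_\psi F\,\partial_{z_l}\psi=-\Pi(\Delta\phi_l+\frac{n+2}{n-2}r_\infty u_\infty^{\frac4{n-2}}\phi_l)$. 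For $l\le L$ this right-hand side is $-(\frac{n+2}{n-2}r_\infty-\mu_l)\,\Pi(u_\infty^{\frac4{n-2}}\phi_l)=0$, since $\Pi$ annihilates $u_\infty^{\frac4{n-2}}\phi_l$ by \eqref{eq:phiij}. Injectivity then gives $\partial_{z_l}\psi(0)=0$, hence $\partial_{z_l}\xi_{\bm z}|_{\bm z=0}=\phi_l$. Finally, the $C^{\frac{3n-2}{n-2}}(\overline\Omega)$ regularity follows from Schauder theory. Write $-\Delta\xi_{\bm z}=r_\infty\xi_{\bm z}^{\frac{n+2}{n-2}}-\sum_i c_iu_\infty^{\frac4{n-2}}\phi_i$. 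The right-hand side is $C^{\frac4{n-2}}$ when $n\ge6$, or better when $n<6$, since $u_\infty^{\frac4{n-2}}$ is a power of a function comparable to $d$. This gives $C^{2+\frac4{n-2}}=C^{\frac{3n-2}{n-2}}$, matching the regularity of $\phi_l$.
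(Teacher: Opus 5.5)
\textbf{Gap: the H\"older regularity step.} The rest of your argument is sound and matches the construction the paper cites: $\xi_{\bm z}=u_\infty+\sum_l z_l\phi_l+h_{\bm z}$ with $h_{\bm z}\perp\phi_1,\dots,\phi_L$. Your weighted orthogonality is the same as $H_0^1$-orthogonality, because $\langle\phi_l,h\rangle=\mu_l\int_\Omega u_\infty^{\frac4{n-2}}\phi_l h\,\ud x$. The following steps are all correct:
\begin{itemize}
\item analyticity through the binomial series for $(1+h)^{\frac{n+2}{n-2}}$ in $L^\infty$;
\item the isomorphism $D_\psi F(0,0)\colon X^\perp\to\Pi(L^p(\Omega))$;
\item the formula $\partial_{z_l}\xi_{\bm z}|_{\bm z=0}=\phi_l$;
\item the two-sided bound $\frac12\le\xi_{\bm z}/u_\infty\le2$.
\end{itemize}

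The last step fails. You conclude $C^{2+\frac4{n-2}}=C^{\frac{3n-2}{n-2}}$, but $2+\frac4{n-2}=\frac{2n}{n-2}$, whereas $\frac{3n-2}{n-2}=2+\frac{n+2}{n-2}=3+\frac4{n-2}$. Feeding a $C^{\frac4{n-2}}$ right-hand side into Schauder theory therefore gives only $\xi_{\bm z}\in C^{\frac{2n}{n-2}}(\overline\Omega)$. That is one full derivative short of what the lemma asserts. The same slip is behind ``matching the regularity of $\phi_l$'': the paper records $\phi_l\in C^{2+\frac4{n-2}}$, which is a different class.

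The missing idea is that the right-hand side is much better than $C^{\frac4{n-2}}$, because it vanishes at $\partial\Omega$ to order $\frac{n+2}{n-2}$:
\[
r_\infty\xi_{\bm z}^{\frac{n+2}{n-2}}-\sum_i c_iu_\infty^{\frac4{n-2}}\phi_i
=u_\infty^{\frac{n+2}{n-2}}\Big[r_\infty\Big(\frac{\xi_{\bm z}}{u_\infty}\Big)^{\frac{n+2}{n-2}}-\sum_i c_i\frac{\phi_i}{u_\infty}\Big].
\]
Two facts make this work:
\begin{itemize}
\item $u_\infty$ vanishes linearly with $\partial_\nu u_\infty<0$ by Hopf, so $u_\infty^{\frac{n+2}{n-2}}\in C^{\frac{n+2}{n-2}}(\overline\Omega)$.
\item If $\xi_{\bm z},\phi_i,u_\infty\in C^{k,\beta}(\overline\Omega)$ vanish on $\partial\Omega$, then the quotients in the bracket lie in $C^{k-1,\beta}(\overline\Omega)$.
\end{itemize}
Bootstrapping this for $u_\infty$, $\phi_i$ and $\xi_{\bm z}$ simultaneously shows the right-hand side lies in $C^{\frac{n+2}{n-2}}(\overline\Omega)$, i.e.\ $C^{1,\frac4{n-2}}$ for $n\ge7$. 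Schauder theory then yields $\xi_{\bm z}\in C^{2+\frac{n+2}{n-2}}(\overline\Omega)=C^{\frac{3n-2}{n-2}}(\overline\Omega)$, the same boundary regularity that $u_\infty$ itself has. For $n=3,4,6$ the exponent $\frac{n+2}{n-2}$ is an integer and everything is in fact $C^\infty(\overline\Omega)$.

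This defect is localized, but as written your proof does not establish the regularity class claimed in the statement.
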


The image
\[
 \Sigma_\infty
 :=
 \left\{
 \xi_{\bm z}:
 |\bm z|<\delta_1
 \right\}
\]
is an $L$-dimensional real-analytic surface in $\mathcal D$. By the
construction in the proof of Lemma \ref{lem:ift},
\be\label{eq:xiz}
 \xi_{\bm z}
 =
 u_\infty
 +
 \sum_{l=1}^Lz_l\phi_l
 +
 h_{\bm z},
\ee
where
\[
h_{\bm z}\in
\operatorname{span}\{\phi_1,\ldots,\phi_L\}^{\perp},
\]
the orthogonal complement is taken with respect to
\eqref{eq:innerproduct}, and
\[
\|h_{\bm z}\|=O(|\bm z|^2)
\qquad\mbox{as }|\bm z|\to0.
\]

\begin{lem}\label{lem:L-ineq}
There exist constants $C,\delta_2>0$ and $\gamma\in(0,1)$,
depending on $u_\infty$, such that, if $|\bm z|\leq\delta_2$,
then
\[
\left|Y_\Omega(\xi_{\bm z})-Y_\Omega(u_\infty)\right|
\leq
C\left(
\sup_{1\leq l\leq L}
\left|
\int_\Omega
\left(
\Delta\xi_{\bm z}
+r_\infty\xi_{\bm z}^{\frac{n+2}{n-2}}
\right)\phi_l\,\ud x
\right|
\right)^{1+\gamma}.
\]
\end{lem}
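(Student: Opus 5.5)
This is a finite-dimensional {\L}ojasiewicz inequality. The plan is to reduce it to the classical result of \cite{L} applied to the real-analytic function
\[
 f(\bm z):=Y_\Omega(\xi_{\bm z}),\qquad |\bm z|<\delta_1,
\]
and then identify $\nabla f$ with the quantities on the right-hand side.

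\emph{Analyticity and criticality of $f$ at $\bm z=0$.} By Lemma~\ref{lem:ift}, $\bm z\mapsto\xi_{\bm z}$ is real analytic into $W^{2,p}(\Omega)\cap W_0^{1,p}(\Omega)$, and $\xi_{\bm z}\in\mathcal D$ with $\xi_{\bm z}/u_\infty\in[1/2,2]$. By the Feireisl--Simondon argument recalled in the introduction, $Y_\Omega$ is real analytic on a neighborhood $U(u_\infty)$. Shrinking $\delta_1$ so that $\xi_{\bm z}\in U(u_\infty)$, the composition $f$ is real analytic near $0$. Since $u_\infty$ solves \eqref{eq:EL}, it is a critical point of $Y_\Omega$, so $\nabla f(0)=0$. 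The {\L}ojasiewicz inequality of \cite{L} then gives $\gamma\in(0,1]$ and $\delta_2>0$ with
\[
|f(\bm z)-f(0)|\leq C|\nabla f(\bm z)|^{1+\gamma}
\]
for $|\bm z|\le\delta_2$. If $\gamma$ happens to equal $1$, decrease it slightly, which is allowed since $|\nabla f|\le 1$ for $|\bm z|$ small; this yields $\gamma\in(0,1)$.

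\emph{Computing $\nabla f$.} Write $\xi=\xi_{\bm z}$, $A=\int_\Omega|\nabla\xi|^2$ and $B=\int_\Omega\xi^{\frac{2n}{n-2}}$. For $\eta=\partial_{z_l}\xi_{\bm z}$,
\[
\partial_{z_l}f=\frac{2}{B^{\frac{n-2}{n}}}\int_\Omega\Big(-\Delta\xi-\frac{A}{B}\,\xi^{\frac{n+2}{n-2}}\Big)\eta\,\ud x .
\]
By \eqref{eq:projection0}, $E:=\Delta\xi+r_\infty\xi^{\frac{n+2}{n-2}}$ lies in $\operatorname{span}\{u_\infty^{\frac4{n-2}}\phi_i\}_{i\le L}$, and its coefficients are exactly $\int_\Omega E\phi_i\,\ud x$ up to the invertible Gram matrix (close to the identity by \eqref{eq:phiij}). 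Hence $\|E\|_{L^\infty}\le C\sup_l|\int_\Omega E\phi_l|=:C\,\mathcal M$.

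Next, the factor $A/B$ must be controlled. Testing $E$ against $\xi$ gives $-A+r_\infty B=\int_\Omega E\xi=O(\mathcal M)$, so $A/B-r_\infty=O(\mathcal M)$. Therefore
\[
-\Delta\xi-\frac AB\,\xi^{\frac{n+2}{n-2}}=-E+\Big(r_\infty-\frac AB\Big)\xi^{\frac{n+2}{n-2}},
\]
which is $O(\mathcal M)$ in $L^\infty$ (note $\xi\le 2u_\infty$ is bounded). Since $\partial_{z_l}\xi_{\bm z}$ is bounded in $W^{2,p}\subset L^\infty$ uniformly in $\bm z$, we get $|\nabla f(\bm z)|\le C\mathcal M$. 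Substituting this into the {\L}ojasiewicz inequality, and using $f(0)=Y_\Omega(u_\infty)$, gives the claim.

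\emph{Main obstacle.} The only delicate point is justifying analyticity of $f$, i.e.\ that $\xi_{\bm z}$ stays in a single neighborhood $U(u_\infty)$ on which $Y_\Omega$ is analytic. This uses the positivity $\xi\ge u_\infty/2\ge c\,d(x)$: the map $v\mapsto v^{\frac{2n}{n-2}}=d^{\frac{2n}{n-2}}(v/d)^{\frac{2n}{n-2}}$ is analytic, because $v/d$ ranges in a compact subset of $(0,\infty)$ and depends continuously on $v$ in $W^{2,p}\cap W^{1,p}_0$, with $p>n$.
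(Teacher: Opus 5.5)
Your proof is correct and is essentially the argument the paper invokes: analyticity of $Y_\Omega$ near $u_\infty$ via Feireisl--Simondon, then the proof of Brendle's Lemma 6.5. That is, you apply the finite-dimensional {\L}ojasiewicz inequality to $\bm z\mapsto Y_\Omega(\xi_{\bm z})$ and bound its gradient by the $\phi_l$-projections, using \eqref{eq:projection0} and the estimate $A/B-r_\infty=O(\mathcal M)$. One cosmetic remark: by \eqref{eq:phiij} the Gram matrix is exactly the identity, so $E=\sum_{l}\big(\int_\Omega E\phi_l\,\ud x\big)u_\infty^{\frac4{n-2}}\phi_l$ holds with no correction.
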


\begin{proof}
Using the analyticity of $Y_{\om}(\cdot)$ shown in Lemma 5.3 of Feireisl-Simondon \cite{FS}, the proof is identical to that of Lemma 6.5 of \cite{Br05}.
\end{proof}

\subsection{Surface generated by bubbles}

Note that
\[
 \begin{split}
 \lambda^{\frac{n-2}{2}}U_{a,\lambda}(x)
 &=
[n(n-2)]^{\frac{n-2}{4}}
 \left(
 \frac{1}
 {\lambda^{-2}+|x-a|^2}
 \right)^{\frac{n-2}{2}}\\
 &\longrightarrow
[n(n-2)]^{\frac{n-2}{4}}|x-a|^{2-n}
 \quad\mbox{in }
 C_{\loc}^2(\mathbb R^n\setminus\{a\})
 \quad\mbox{as }\lambda\to\infty.
 \end{split}
\]
For $a\in\Omega$, let $H(a,\cdot)$ be the solution of
\[
 \begin{cases}
 -\Delta H(a,x)=0
 &\mbox{in }\Omega,\\[1mm]
 H(a,x)=[n(n-2)]^{\frac{n-2}{4}}|a-x|^{2-n}
 &\mbox{on }\partial\Omega.
 \end{cases}
\]

\begin{prop}[Proposition 1 of Rey \cite{Rey}] \label{prop:bubble-derivative}
Suppose $a\in \om$ with $\operatorname{dist}(a,\partial\Omega)\ge \delta>0$ and $\lda>1$.  Let $H(a, \cdot)$ be defined as above and $h_{a,\lda}=U_{a,\lda}-PU_{a,\lda}$. Then
\[
f_{a,\lda} (x):= h_{a,\lda}(x) - \lda^{-\frac{n-2}{2}} H(a,x), \quad  x\in \overline\om
\]
is a smooth function on $\overline \om$, also smooth in parameters $a$ and $\lda$. Moreover,  there hold
\begin{align*}
& |f_{a,\lda} | \le C \lda^{-\frac{n+2}{2}}, \quad |\pa_{a^j} f_{a,\lda}| \le  C \lda^{-\frac{n+2}{2}}, \quad |\pa_\lda f_{a,\lda} | \le  C\lda^{-\frac{n+4}{2}}\\
& |\pa_{a^j}  \pa_\lda f_{a,\lda} | \le  C\lda^{-\frac{n+4}{2}},\quad |\pa_{a^j} \pa_{a^k}   f_{a,\lda} | \le  C\lda^{-\frac{n+2}{2}},\quad |\pa_\lda^2 f_{a,\lda} | \le  C\lda^{-\frac{n+6}{2}},
\end{align*}
for every $j, k=1,\dots, n, $ where  $C$ depends only on $n, \om$ and $\delta$.
\end{prop}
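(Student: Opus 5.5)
The function $f_{a,\lambda}$ is harmonic in $\Omega$, since both $h_{a,\lambda}$ and $H(a,\cdot)$ are. Its boundary values are explicit:
\[
f_{a,\lambda}(x)=U_{a,\lambda}(x)-\lambda^{-\frac{n-2}{2}}[n(n-2)]^{\frac{n-2}{4}}|x-a|^{2-n},\qquad x\in\partial\Omega.
\]
All estimates therefore reduce, via the maximum principle, to estimates of the boundary datum and its parameter derivatives on $\partial\Omega$. Derivatives in $a$ and $\lambda$ commute with the harmonic extension, because the boundary datum depends smoothly on parameters and $\Omega$ is fixed. So $\partial_a^\beta\partial_\lambda^k f_{a,\lambda}$ is the harmonic function with boundary values $\partial_a^\beta\partial_\lambda^k$ of the datum. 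Smoothness on $\overline\Omega$ follows from elliptic regularity, since the datum is smooth on $\partial\Omega$ when $|x-a|\ge\delta$.

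To compute the datum, write $r=|x-a|\ge\delta$ on $\partial\Omega$ and $c_n=[n(n-2)]^{\frac{n-2}{4}}$. Then
\[
U_{a,\lambda}(x)=c_n\lambda^{-\frac{n-2}{2}}\,(\lambda^{-2}+r^2)^{-\frac{n-2}{2}}
=c_n\lambda^{-\frac{n-2}{2}}r^{2-n}\big(1+\lambda^{-2}r^{-2}\big)^{-\frac{n-2}{2}}.
\]
Set $t=\lambda^{-2}$ and $g(t,r)=(t+r^2)^{-\frac{n-2}{2}}-r^{2-n}$. The datum equals $c_n\lambda^{-\frac{n-2}{2}}g(\lambda^{-2},r)$. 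The function $g$ is smooth in $(t,r)$ for $t\in[0,1]$ and $r\in[\delta,\operatorname{diam}\Omega]$, with $g(0,r)=0$. Hence $g=t\,\tilde g(t,r)$ with $\tilde g$ smooth and bounded together with all its derivatives. This gives $|{\rm datum}|\le C\lambda^{-\frac{n-2}{2}-2}=C\lambda^{-\frac{n+2}{2}}$.

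For the $a$-derivatives, $\partial_{a^j}$ and $\partial_{a^j}\partial_{a^k}$ act only through $r$. They keep the factor $t$, with uniformly bounded coefficients as long as $r\ge\delta$, so they also give $C\lambda^{-\frac{n+2}{2}}$.

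For the $\lambda$-derivatives, write the datum as $c_n\,\lambda^{-\frac{n+2}{2}}\,\tilde g(\lambda^{-2},r)$. Each $\partial_\lambda$ either differentiates the power $\lambda^{-\frac{n+2}{2}}$, gaining a factor $\lambda^{-1}$, or falls on $\tilde g(\lambda^{-2},r)$, producing the factor $-2\lambda^{-3}$, which is even better. So one $\lambda$-derivative gives $\lambda^{-\frac{n+4}{2}}$ and two give $\lambda^{-\frac{n+6}{2}}$. The mixed derivative $\partial_{a^j}\partial_\lambda$ combines both arguments and gives $\lambda^{-\frac{n+4}{2}}$.

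Finally, the maximum principle transfers each sup bound on $\partial\Omega$ to $\overline\Omega$, with constants depending only on $n$, $\delta$ and $\operatorname{diam}\Omega$.

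The only mildly delicate point is justifying the interchange of parameter derivatives with the harmonic extension. I would handle it by noting that difference quotients in the parameters are harmonic functions whose boundary data converge uniformly. The maximum principle then gives uniform convergence in $\overline\Omega$, and harmonicity persists in the limit. Interior and boundary Schauder estimates upgrade this convergence to smoothness in $(x,a,\lambda)$ jointly. The constraint $\lambda>1$ keeps $t\in(0,1)$, so all bounds are uniform.
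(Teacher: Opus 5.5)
Your proposal is correct: $f_{a,\lambda}$ is harmonic in $\Omega$ with boundary datum $c_0\lambda^{-\frac{n+2}{2}}\tilde g(\lambda^{-2},|x-a|)$, with $\tilde g$ smooth because $|x-a|\ge\delta$ on $\partial\Omega$. The maximum principle, applied to $f_{a,\lambda}$ and to its parameter derivatives (which commute with the harmonic extension since $\Omega$ is fixed), then yields all six bounds.

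This is essentially the maximum-principle argument behind Rey's Proposition~1. The paper itself gives no proof and only cites Rey, so your write-up supplies the omitted details along the same standard route.
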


Let $\ell\geq1$, and consider parameters
\[
 \bm\alpha
 =
 (\alpha_1,\ldots,\alpha_\ell)
 \in(0,\infty)^\ell,
 \qquad
 \bm\lambda
 =
 (\lambda_1,\ldots,\lambda_\ell)
 \in(1,\infty)^\ell,
\]
and
\[
 \bm a
 =
 (a_1,\ldots,a_\ell)
 \in\Omega^\ell,
 \qquad \mbox{with }
 a_k=(a_k^1,\ldots,a_k^n)\in\Omega.
\]
Then $ (\bm{\al},\bm{a},  \bm{\lda}) \mapsto \sum_i\alpha_iPU_{a_i,\lda_i} $ is a surface in $\mathcal{D}$. Recall the bubble interactions by
\begin{align*}
&\Lda_{i,j}(\bm{a}, \bm{\lda}):= \Big(\frac{\lda_i}{\lda_j}+\frac{\lda_j}{\lda_i}+ \lda_i\lda_j |a_i-a_j|^2 \Big)^{\frac{2-n}{2}} \quad \mbox{if }i\neq j,\\  &\mbox{and }\Lda_{i,i}(\bm{a}, \bm{\lda})=0.
\end{align*}
We shall restrict these parameters to a neighborhood in which
\[
 |\alpha_i-1|\ll1,
 \qquad
 \lambda_i^{-1}\ll1,
 \qquad
 d(a_i)\geq\delta_0.
\]

\subsection{The energy level of the bubbles}

For simplicity, we write $PU_i=PU_{a_i,\lambda_i}$ and
$h_i=h_{a_i,\lambda_i}$. Set
\begin{align*}
c_0&=[n(n-2)]^{\frac{n-2}{4}},\\
C_1&=
c_0^{\frac{n+2}{n-2}}
\int_{\mathbb R^n}
\left(\frac{1}{1+|y|^2}\right)^{\frac{n+2}{2}}
\,\ud y\\
C_2
&=
c_0^{\frac{2n}{n-2}}
\int_{\mathbb R^n}
\left(\frac{1}{1+|y|^2}\right)^n
\,\ud y.
\end{align*}
Notice that $C_2^{\frac2n}=S_n$. In the case
$u_\infty\equiv0$, $
r_\infty=C_2^{\frac2n}\ell^{\frac2n}.
$

\begin{prop}\label{prop:quantif-1}
Let
\be\label{eq:sigma11}
\sigma
=
r_\infty^{-\frac{n-2}{4}}
\sum_{i=1}^{\ell}\alpha_iPU_i.
\ee
There exist $\va_0\in(0,1)$ and $C>0$, depending only on
$\Omega$, $n$, $\ell$, and $\delta_0$, such that, if
\[
\sum_{i=1}^{\ell}|\alpha_i-1|
+
\sum_{i=1}^{\ell}\lambda_i^{-1}
+
\sum_{i\neq j}\Lambda_{i,j}
\leq\va_0,
\]
then
\be\label{eq:energy-sigma-alpha}
\begin{aligned}
Y_\Omega(\sigma)
={}&
r_\infty
-
C_1c_0r_\infty^{-\frac{n-2}{2}}
\sum_{i\neq j}\Lambda_{i,j}\\
&+
O\left(\sum_{i=1}^{\ell}|\alpha_i-1|^2\right)
+
o\left(\sum_{i\neq j}\Lambda_{i,j}\right)
+
O\left(\sum_{i=1}^{\ell}\lambda_i^{2-n}\right).
\end{aligned}
\ee
Here and below, the $o$-terms are uniform as
\[
\sum_{i=1}^{\ell}|\alpha_i-1|
+
\sum_{i=1}^{\ell}\lambda_i^{-1}
+
\sum_{i\neq j}\Lambda_{i,j}
\longrightarrow0
\]
under the condition $d(a_i)\geq\delta_0$.

\end{prop}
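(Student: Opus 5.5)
Since $Y_\Omega$ is homogeneous of degree zero, the factor $r_\infty^{-\frac{n-2}{4}}$ drops out, and it suffices to expand the numerator $N:=\int_\Omega|\nabla\sum_i\alpha_iPU_i|^2$ and the denominator $D:=\int_\Omega(\sum_i\alpha_iPU_i)^{\frac{2n}{n-2}}$. Here $r_\infty=C_2^{2/n}\ell^{2/n}$ because $u_\infty\equiv0$. We use the classical Bahri--Rey estimates for the individual pieces; these require $d(a_i)\ge\delta_0$ together with Proposition~\ref{prop:bubble-derivative}, which gives $0\le h_i\le C\lambda_i^{-\frac{n-2}{2}}$.

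\textbf{Numerator.} Since $-\Delta PU_i=U_i^{\frac{n+2}{n-2}}$ with zero boundary data, we have $\langle PU_i,PU_j\rangle=\int_\Omega U_i^{\frac{n+2}{n-2}}PU_j$. The diagonal terms give $\int_\Omega U_i^{\frac{2n}{n-2}}-\int_\Omega U_i^{\frac{n+2}{n-2}}h_i=C_2+O(\lambda_i^{2-n})$, since $\int_{\mathbb R^n\setminus\Omega}U_i^{2n/(n-2)}=O(\lambda_i^{-n})$ and the $h_i$-term is $O(\lambda_i^{-(n-2)})$. For $i\ne j$, the standard interaction estimate gives $\int_{\mathbb R^n}U_i^{\frac{n+2}{n-2}}U_j=C_1c_0\Lambda_{i,j}+o(\Lambda_{i,j})$, with the boundary/$h_j$ corrections $O(\lambda_i^{-\frac{n-2}{2}}\lambda_j^{-\frac{n-2}{2}})=O(\sum_k\lambda_k^{2-n})$. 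Hence
\[
N=C_2\sum_i\alpha_i^2+C_1c_0\sum_{i\ne j}\alpha_i\alpha_j\Lambda_{i,j}+o(\bm\Lambda)+O\Big(\sum_k\lambda_k^{2-n}\Big).
\]

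\textbf{Denominator.} This is the main step. We split $\Omega$ into regions where a single bubble dominates and use the elementary inequality
\[
\Big|(\textstyle\sum_i b_i)^{p+1}-\sum_i b_i^{p+1}-(p+1)\sum_{i\ne j}b_i^{p}b_j\Big|\le C\sum_{i\ne j}(b_ib_j)^{\frac{p+1}{2}}\ \ (\text{plus }b_i^{p-1}b_j^2\text{ if }p>1),
\]
with $p=\frac{n+2}{n-2}$ and $b_i=\alpha_iPU_i$. The key fact is $\int(U_iU_j)^{\frac n{n-2}}=O(\Lambda_{i,j}^{\frac n{n-2}}\ln\Lambda_{i,j}^{-1})=o(\Lambda_{i,j})$. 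Replacing $PU_i$ by $U_i$ in $\int PU_i^{\frac{2n}{n-2}}$ costs $\frac{2n}{n-2}\int U_i^{\frac{n+2}{n-2}}h_i=O(\lambda_i^{2-n})$, and the cross terms $\int PU_i^{p}PU_j$ reduce to $C_1c_0\Lambda_{i,j}$ up to the same errors. Therefore
\[
D=C_2\sum_i\alpha_i^{\frac{2n}{n-2}}+\frac{2n}{n-2}\,C_1c_0\sum_{i\ne j}\alpha_i^{p}\alpha_j\Lambda_{i,j}+o(\bm\Lambda)+O\Big(\sum_k\lambda_k^{2-n}\Big).
\]
Some care is needed with the $\Lambda$-mixed terms when $n\ge6$ (where $p\le2$); we handle them by the pointwise dichotomy $U_j\le U_i$ versus $U_j>U_i$.

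\textbf{Expansion.} We substitute $\alpha_i=1+\beta_i$. The $\beta$-linear terms in $N$ are $2C_2\sum\beta_i$, and in $D^{\frac{n-2}{n}}$ they are $\frac{n-2}{n}(C_2\ell)^{-2/n}\cdot\frac{2n}{n-2}C_2\sum\beta_i=2(C_2\ell)^{-2/n}C_2\sum\beta_i$. The ratio $N/D^{\frac{n-2}n}$ has base value $(C_2\ell)^{1-\frac{n-2}{n}}=(C_2\ell)^{2/n}=r_\infty$. Its linear variation is
\[
\frac{2C_2\sum\beta_i}{(C_2\ell)^{\frac{n-2}n}}-r_\infty\cdot\frac{2C_2\sum\beta_i}{C_2\ell}=0,
\]
so only $O(|\beta|^2)$ remains. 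Products $\beta\Lambda$ are $o(\bm\Lambda)$. For the $\Lambda$ terms, the numerator contributes $C_1c_0\bm\Lambda/(C_2\ell)^{\frac{n-2}{n}}$. The denominator contributes
\[
-\frac{n-2}{n}\,r_\infty\,\frac{\frac{2n}{n-2}C_1c_0\bm\Lambda}{C_2\ell}=-2C_1c_0\bm\Lambda\,(C_2\ell)^{-\frac{n-2}{n}}.
\]
The net coefficient is $-C_1c_0(C_2\ell)^{-\frac{n-2}{n}}=-C_1c_0r_\infty^{-\frac{n-2}{2}}$, which matches the statement. The main obstacle is the $o(\Lambda_{i,j})$ control of the nonlinear remainder in $D$, which we obtain from the $L^{\frac n{n-2}}$ product-interaction bound above.
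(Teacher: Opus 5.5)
Your proposal is correct and follows essentially the same route as the paper. The numerator is handled by Green's identity with $-\Delta PU_j=U_j^{\frac{n+2}{n-2}}$ and Bahri's estimate (E1), the denominator by splitting $\Omega$ into regions where one bubble dominates, and the quotient by a Taylor expansion at $\alpha_1=\cdots=\alpha_\ell=1$, where the linear terms cancel and the interaction coefficients combine to $-C_1c_0r_\infty^{-\frac{n-2}{2}}$.

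The differences are minor. The paper bounds the remainders on the dominance regions by $\int(PU_i)^{\frac{n+2}{n-2}-\varepsilon}(PU_j)^{1+\varepsilon}\,\ud x\leq C\Lambda_{i,j}^{1+\varepsilon}$ rather than by your $L^{\frac{n}{n-2}}$ product bound, and it tracks the $H(a_i,a_j)$ terms before absorbing them into $O(\sum_k\lambda_k^{2-n})$. Your extra term $b_i^{p-1}b_j^2$ is genuinely needed only for $n=3$. For $n\geq6$ it is not $o(\Lambda_{i,j})$ if integrated over all of $\Omega$. On $\{b_j\leq b_i\}$, however, it is dominated by $(b_ib_j)^{\frac{p+1}{2}}$ whenever $n\geq4$, and that restriction is exactly what your dichotomy provides.
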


\begin{proof}
Since
\[
-\Delta PU_j
=
U_j^{\frac{n+2}{n-2}}
\qquad\mbox{in }\Omega,
\]
we have
\be\label{eq:sigma-gradient-pre}
\begin{aligned}
\int_\Omega|\nabla\sigma|^2\,\ud x
&=-\int_{\om} \sigma\Delta \sigma \,\ud x\\
&=r_\infty^{-\frac{n-2}{2}}
\sum_{i,j=1}^{\ell}
\alpha_i\alpha_j
\int_\Omega
PU_iU_j^{\frac{n+2}{n-2}}\,\ud x\\
&=
r_\infty^{-\frac{n-2}{2}}
\left[
\sum_{i=1}^{\ell}\alpha_i^2
\int_\Omega
PU_iU_i^{\frac{n+2}{n-2}}\,\ud x+
\sum_{i\neq j}\alpha_i\alpha_j
\int_\Omega
PU_iU_j^{\frac{n+2}{n-2}}\,\ud x
\right].
\end{aligned}
\ee

We first record the expansions of the integrals in
\eqref{eq:sigma-gradient-pre}. For $i\neq j$, we write
\[
\begin{aligned}
\int_\Omega
PU_iU_j^{\frac{n+2}{n-2}}\,\ud x
={}&
\int_{\mathbb R^n}
U_iU_j^{\frac{n+2}{n-2}}\,\ud x-
\int_{\mathbb R^n\setminus\Omega}
U_iU_j^{\frac{n+2}{n-2}}\,\ud x
-
\int_\Omega
h_iU_j^{\frac{n+2}{n-2}}\,\ud x.
\end{aligned}
\]
By \cite[(E1)]{Bahri},
\begin{equation}\label{eq:twodiffbubble}
\int_{\mathbb R^n}
U_iU_j^{\frac{n+2}{n-2}}\,\ud x
=
C_1c_0\Lambda_{i,j}
+
o(\Lambda_{i,j}).
\end{equation}
We use only the $o(\Lambda_{i,j})$ remainder here; no more precise
remainder is needed in the argument.

Since $d(a_i),d(a_j)\geq\delta_0$, Young's inequality gives
\[
\int_{\mathbb R^n\setminus\Omega}
U_iU_j^{\frac{n+2}{n-2}}\,\ud x
\leq \int_{\R^n\backslash\om} \left(U_{i}^{\frac{2n}{n-2}}+U_{j}^{\frac{2n}{n-2}}\right) \,\ud x 
=
O(\lambda_i^{-n})+O(\lambda_j^{-n}).
\]
Moreover, since $\Delta_x H(a_i,x)=0$, by Taylor expanding $H(a_i,x)$ at $x=a_j$ and using the symmetries of $U_{j}^{\frac{n+2}{n-2}}$, we obtain
\[
\begin{aligned}
\int_\Omega
U_j^{\frac{n+2}{n-2}}H(a_i,x)\,\ud x
={}&
C_1\lambda_j^{-\frac{n-2}{2}}H(a_i,a_j)
+
O\left(\lambda_j^{-\frac{n+2}{2}}\right).
\end{aligned}
\]
Hence, by using Proposition \ref{prop:bubble-derivative}, we obtain
\begin{align}
\int_\Omega
h_iU_j^{\frac{n+2}{n-2}}\,\ud x
&=\int_{\om} U_{j}^{\frac{n+2}{n-2}}(\lda_i^{-\frac{n-2}{2}} H(a_i,x)+O(\lda_i^{-\frac{n+2}{2}})) \,\ud x\nonumber\\
&=
C_1(\lambda_i\lambda_j)^{-\frac{n-2}{2}}
H(a_i,a_j)
+
O(\lambda_i^{-n})+O(\lambda_j^{-n}). \label{eq:remainderonebubble}
\end{align}
Consequently,
\be\label{eq:PUiUj^p}
\begin{aligned}
\int_\Omega
PU_iU_j^{\frac{n+2}{n-2}}\,\ud x
=
C_1c_0\Lambda_{i,j}-
C_1(\lambda_i\lambda_j)^{-\frac{n-2}{2}}
H(a_i,a_j)+
o(\Lambda_{i,j})
+
O(\lambda_i^{-n})+O(\lambda_j^{-n}).
\end{aligned}
\ee

The same argument with $i=j$ yields
\be\label{eq:PUiUi^p}
\int_\Omega
PU_iU_i^{\frac{n+2}{n-2}}\,\ud x
=
C_2
-
C_1\lambda_i^{2-n}H(a_i,a_i)
+
O(\lambda_i^{-n}).
\ee
Substituting \eqref{eq:PUiUj^p} and
\eqref{eq:PUiUi^p} into \eqref{eq:sigma-gradient-pre}, we obtain
\be\label{eq:sigma1}
\begin{aligned}
\int_\Omega|\nabla\sigma|^2\,\ud x
=
r_\infty^{-\frac{n-2}{2}}
\Bigg\{&
C_2\sum_{i=1}^{\ell}\alpha_i^2
-
C_1\sum_{i=1}^{\ell}
\alpha_i^2\lambda_i^{2-n}H(a_i,a_i)\\
&+
C_1\sum_{i\neq j}
\alpha_i\alpha_j
\left(
c_0\Lambda_{i,j}
-
(\lambda_i\lambda_j)^{-\frac{n-2}{2}}
H(a_i,a_j)
\right)\\
&+
o\left(\sum_{i\neq j}\Lambda_{i,j}\right)
+
O\left(\sum_{i=1}^{\ell}\lambda_i^{-n}\right)
\Bigg\}.
\end{aligned}
\ee

We now prove the expansion
\be\label{eq:sum-bubbles-power}
\begin{aligned}
\int_\Omega
\left(
\sum_{i=1}^{\ell}\alpha_iPU_i
\right)^{\frac{2n}{n-2}}
\,\ud x
={}&
\sum_{i=1}^{\ell}
\alpha_i^{\frac{2n}{n-2}}
\int_\Omega
(PU_i)^{\frac{2n}{n-2}}\,\ud x\\
&+
\frac{2n}{n-2}
\sum_{i\neq j}
\alpha_i\alpha_j^{\frac{n+2}{n-2}}
\int_\Omega
PU_i(PU_j)^{\frac{n+2}{n-2}}\,\ud x
+
o\left(\sum_{i\neq j}\Lambda_{i,j}\right).
\end{aligned}
\ee

For $\ell=1$, this identity is immediate. We therefore assume that
$\ell\geq2$. By decreasing $\va_0$ if necessary, we may suppose that
\[
\frac12\leq\alpha_i\leq\frac32,
\qquad 1\leq i\leq\ell.
\]
Fix
\[
0<\varepsilon<
\min\left\{1,\frac{2}{n-2}\right\}.
\]
Then
\[
\frac{n+2}{n-2}-\varepsilon>1+\varepsilon
\quad
\mbox{and}
\quad
\left(
\frac{n+2}{n-2}-\varepsilon
\right)
+
(1+\varepsilon)
=
\frac{2n}{n-2}.
\]

We first record the two-bubble estimate that will be used repeatedly.
Since $0<PU_i\leq U_i$ in $\Omega$, by Proposition B.2 of \cite{FG2020}, for every $i\neq j$,
\be\label{eq:two-bubble-epsilon}
\int_\Omega
(PU_i)^{\frac{n+2}{n-2}-\varepsilon}
(PU_j)^{1+\varepsilon}
\,\ud x
\leq \int_{\R^n}
(U_i)^{\frac{n+2}{n-2}-\varepsilon}
(U_j)^{1+\varepsilon}
\,\ud x
\leq
C\Lambda_{i,j}^{1+\varepsilon}.
\ee

We next introduce a measurable partition
\[
\Omega=A_1\cup\cdots\cup A_\ell
\]
such that the sets $A_i$ are pairwise disjoint and
\[
\alpha_iPU_i
=
\max_{1\leq k\leq\ell}\alpha_kPU_k
\qquad\mbox{on }A_i.
\]
Points at which the maximum is attained by more than one index may be
assigned to any one of the corresponding sets. In particular, on
$A_i$,
\be\label{eq:dominant-bubble}
\alpha_jPU_j\leq\alpha_iPU_i
\qquad\mbox{for every }j,
\ee
and hence
\[
PU_j\leq3PU_i
\qquad\mbox{on }A_i.
\]

For $s,t\geq0$ satisfying $t\leq(\ell-1)s$, Taylor's formula gives
\be\label{eq:algebraic-taylor-bubbles}
\left|
(s+t)^{\frac{2n}{n-2}}
-
s^{\frac{2n}{n-2}}
-
\frac{2n}{n-2}s^{\frac{n+2}{n-2}}t
\right|
\leq
Cs^{\frac4{n-2}}t^2.
\ee
On $A_i$, we apply \eqref{eq:algebraic-taylor-bubbles} with
\[
s=\alpha_iPU_i,
\qquad
t=\sum_{j\neq i}\alpha_jPU_j.
\]
The defining property of $A_i$ gives
$
t\leq(\ell-1)s.
$
Consequently,
\be\label{eq:power-expansion-on-Ai}
\begin{aligned}
\left(
\sum_{j=1}^{\ell}\alpha_jPU_j
\right)^{\frac{2n}{n-2}}
={}&
\alpha_i^{\frac{2n}{n-2}}
(PU_i)^{\frac{2n}{n-2}}+
\frac{2n}{n-2}
\alpha_i^{\frac{n+2}{n-2}}
(PU_i)^{\frac{n+2}{n-2}}
\sum_{j\neq i}\alpha_jPU_j\\
&+
O\left(
(PU_i)^{\frac4{n-2}}
\left(
\sum_{j\neq i}PU_j
\right)^2
\right)
\end{aligned}
\ee
on $A_i$, where we have used the uniform upper and lower bounds for
the coefficients $\alpha_i$. We claim that the integral of the last term in
\eqref{eq:power-expansion-on-Ai} is
$o(\sum_{i\neq j}\Lambda_{i,j})$. Indeed,
\[
\left(
\sum_{j\neq i}PU_j
\right)^2
\leq
(\ell-1)\sum_{j\neq i}(PU_j)^2.
\]
Moreover, by \eqref{eq:dominant-bubble}, on $A_i$ we have
$PU_j\leq3PU_i$. Since $\varepsilon<1$, it follows that
\[
\begin{aligned}
(PU_i)^{\frac4{n-2}}(PU_j)^2
&=
(PU_i)^{\frac{n+2}{n-2}-\varepsilon}
(PU_j)^{1+\varepsilon}
\left(\frac{PU_j}{PU_i}\right)^{1-\varepsilon}\leq
C
(PU_i)^{\frac{n+2}{n-2}-\varepsilon}
(PU_j)^{1+\varepsilon}
\end{aligned}
\]
on $A_i$. Therefore, by \eqref{eq:two-bubble-epsilon},
\be\label{eq:Taylor-remainder-bubbles}
\begin{aligned}
\sum_{i=1}^{\ell}
\int_{A_i}
(PU_i)^{\frac4{n-2}}
\left(
\sum_{j\neq i}PU_j
\right)^2
\,\ud x
&\leq
C\sum_{i\neq j}
\int_\Omega
(PU_i)^{\frac{n+2}{n-2}-\varepsilon}
(PU_j)^{1+\varepsilon}
\,\ud x\\
&\leq
C\sum_{i\neq j}\Lambda_{i,j}^{1+\varepsilon}\\
&=
o\left(\sum_{i\neq j}\Lambda_{i,j}\right).
\end{aligned}
\ee
In the last line, we used
$
\sum_{i\neq j}\Lambda_{i,j}^{1+\varepsilon}
\leq
\left(
\max_{i\neq j}\Lambda_{i,j}
\right)^\varepsilon
\sum_{i\neq j}\Lambda_{i,j}
=
o\left(\sum_{i\neq j}\Lambda_{i,j}\right).
$

Integrating \eqref{eq:power-expansion-on-Ai} over $A_i$, summing over
$i$, and using \eqref{eq:Taylor-remainder-bubbles}, we obtain
\be\label{eq:power-expansion-partition}
\begin{aligned}
\int_\Omega
\left(
\sum_{i=1}^{\ell}\alpha_iPU_i
\right)^{\frac{2n}{n-2}}
\,\ud x
={}&
\sum_{i=1}^{\ell}
\alpha_i^{\frac{2n}{n-2}}
\int_{A_i}
(PU_i)^{\frac{2n}{n-2}}
\,\ud x\\
&+
\frac{2n}{n-2}
\sum_{i\neq j}
\alpha_i^{\frac{n+2}{n-2}}\alpha_j
\int_{A_i}
(PU_i)^{\frac{n+2}{n-2}}PU_j
\,\ud x\\
&+
o\left(\sum_{i\neq j}\Lambda_{i,j}\right).
\end{aligned}
\ee

It remains to replace each $A_i$ by $\Omega$ in the first two terms.

We first consider the pure-power terms. Since the sets $A_k$ form a
partition of $\Omega$,
\[
\Omega\setminus A_i
=
\bigcup_{k\neq i}A_k.
\]
On $A_k$, the maximality of $\alpha_kPU_k$ gives
\[
PU_i\leq3PU_k.
\]
It follows that
\[
\begin{aligned}
(PU_i)^{\frac{2n}{n-2}}
&=
(PU_i)^{1+\varepsilon}
(PU_k)^{\frac{n+2}{n-2}-\varepsilon}
\left(
\frac{PU_i}{PU_k}
\right)^{\frac{n+2}{n-2}-\varepsilon}\leq
C
(PU_i)^{1+\varepsilon}
(PU_k)^{\frac{n+2}{n-2}-\varepsilon}
\end{aligned}
\]
on $A_k$. Thus, using \eqref{eq:two-bubble-epsilon} with the indices
interchanged,
\[
\begin{aligned}
\sum_{i=1}^{\ell}
\int_{\Omega\setminus A_i}
(PU_i)^{\frac{2n}{n-2}}
\,\ud x
&=
\sum_{i=1}^{\ell}
\sum_{k\neq i}
\int_{A_k}
(PU_i)^{\frac{2n}{n-2}}
\,\ud x\\
&\leq
C\sum_{i\neq k}
\int_\Omega
(PU_i)^{1+\varepsilon}
(PU_k)^{\frac{n+2}{n-2}-\varepsilon}
\,\ud x\\
&\leq
C\sum_{i\neq k}\Lambda_{i,k}^{1+\varepsilon}\\
&=
o\left(\sum_{i\neq j}\Lambda_{i,j}\right).
\end{aligned}
\]
Consequently,
\be\label{eq:extend-pure-term}
\begin{aligned}
\sum_{i=1}^{\ell}
\alpha_i^{\frac{2n}{n-2}}
\int_{A_i}(PU_i)^{\frac{2n}{n-2}}\,\ud x
={}&
\sum_{i=1}^{\ell}
\alpha_i^{\frac{2n}{n-2}}
\int_\Omega(PU_i)^{\frac{2n}{n-2}}\,\ud x+
o\left(\sum_{i\neq j}\Lambda_{i,j}\right).
\end{aligned}
\ee

We next consider the first-order interaction terms. We claim that
\be\label{eq:extend-cross-term}
\begin{aligned}
&\sum_{i\neq j}
\alpha_i^{\frac{n+2}{n-2}}\alpha_j
\int_{A_i}
(PU_i)^{\frac{n+2}{n-2}}PU_j\,\ud x\\
={}&
\sum_{i\neq j}
\alpha_i^{\frac{n+2}{n-2}}\alpha_j
\int_\Omega
(PU_i)^{\frac{n+2}{n-2}}PU_j\,\ud x
+
o\left(\sum_{i\neq j}\Lambda_{i,j}\right).
\end{aligned}
\ee
To prove this, write
\[
\begin{aligned}
&\sum_{i\neq j}
\int_{\Omega\setminus A_i}
(PU_i)^{\frac{n+2}{n-2}}PU_j\,\ud x=
\sum_{i\neq j}\sum_{k\neq i}
\int_{A_k}
(PU_i)^{\frac{n+2}{n-2}}PU_j\,\ud x.
\end{aligned}
\]
On $A_k$, we have
\[
PU_i\leq3PU_k,
\qquad
PU_j\leq3PU_k.
\]
Therefore,
\[
(PU_i)^{\frac{n+2}{n-2}}PU_j
\leq
C(PU_i)^{\frac{n+2}{n-2}}PU_k.
\]
Furthermore, since
\[
\frac4{n-2}-\varepsilon>0,
\]
we have on $A_k$
\[
\begin{aligned}
(PU_i)^{\frac{n+2}{n-2}}PU_k
&=
(PU_i)^{1+\varepsilon}
(PU_k)^{\frac{n+2}{n-2}-\varepsilon}
\left(
\frac{PU_i}{PU_k}
\right)^{\frac4{n-2}-\varepsilon}\leq
C
(PU_i)^{1+\varepsilon}
(PU_k)^{\frac{n+2}{n-2}-\varepsilon}.
\end{aligned}
\]
It follows from \eqref{eq:two-bubble-epsilon} that
\[
\begin{aligned}
\sum_{i\neq j}
\int_{\Omega\setminus A_i}
(PU_i)^{\frac{n+2}{n-2}}PU_j\,\ud x&\leq
C\sum_{i\neq j}\sum_{k\neq i}
\int_\Omega
(PU_i)^{1+\varepsilon}
(PU_k)^{\frac{n+2}{n-2}-\varepsilon}
\,\ud x\\
&\quad\leq
C\sum_{i\neq k}\Lambda_{i,k}^{1+\varepsilon}\\
&\quad=
o\left(\sum_{i\neq j}\Lambda_{i,j}\right).
\end{aligned}
\]
Here the additional sum over $j$ contributes only a constant
depending on $\ell$. This proves \eqref{eq:extend-cross-term}.

Combining \eqref{eq:power-expansion-partition},
\eqref{eq:extend-pure-term}, and \eqref{eq:extend-cross-term}, we
obtain
\[
\begin{aligned}
\int_\Omega
\left(
\sum_{i=1}^{\ell}\alpha_iPU_i
\right)^{\frac{2n}{n-2}}
\,\ud x
={}&
\sum_{i=1}^{\ell}
\alpha_i^{\frac{2n}{n-2}}
\int_\Omega
(PU_i)^{\frac{2n}{n-2}}\,\ud x\\
&+
\frac{2n}{n-2}
\sum_{i\neq j}
\alpha_i^{\frac{n+2}{n-2}}\alpha_j
\int_\Omega
(PU_i)^{\frac{n+2}{n-2}}PU_j\,\ud x\\
&+
o\left(\sum_{i\neq j}\Lambda_{i,j}\right).
\end{aligned}
\]
This proves \eqref{eq:sum-bubbles-power}.

We next compute the two types of integrals appearing in
\eqref{eq:sum-bubbles-power}. Since
\[
0\leq h_i\leq U_i
\qquad\mbox{and}\qquad
\|h_i\|_{L^\infty(\Omega)}
\leq
C\lambda_i^{-\frac{n-2}{2}},
\]
Taylor's formula gives
\[
\begin{aligned}
\int_\Omega(PU_i)^{\frac{2n}{n-2}}\,\ud x
={}&
\int_\Omega U_i^{\frac{2n}{n-2}}\,\ud x-
\frac{2n}{n-2}
\int_\Omega
U_i^{\frac{n+2}{n-2}}h_i\,\ud x\\
&+
O\left(
\int_\Omega
U_i^{\frac4{n-2}}h_i^2\,\ud x
+
\int_\Omega h_i^{\frac{2n}{n-2}}\,\ud x
\right).
\end{aligned}
\]
Furthermore,
\[
\int_\Omega
U_i^{\frac4{n-2}}h_i^2\,\ud x
\leq
\begin{cases}
C\lambda_i^{-2},&n=3,\\
C\lambda_i^{-4}\log\lambda_i,&n=4,\\
C\lambda_i^{-n},&n\geq5,
\end{cases}
\]
and
\[
\int_\Omega h_i^{\frac{2n}{n-2}}\,\ud x
=
O(\lambda_i^{-n}).
\]
Both terms are $o(\lambda_i^{2-n})$. Therefore,
\be\label{eq:PUi^p+1}
\begin{aligned}
\int_\Omega(PU_i)^{\frac{2n}{n-2}}\,\ud x
=
C_2
-
\frac{2n}{n-2}
C_1\lambda_i^{2-n}H(a_i,a_i)
+
o(\lambda_i^{2-n}).
\end{aligned}
\ee

We next estimate the interaction involving two projected bubbles.
Since
\[
PU_j=U_j-h_j>0
\qquad\mbox{in }\Omega,
\]
we have $0\leq h_j\leq U_j$. Hence, by the mean value theorem,
\be\label{eq:Uj-PUj-power}
0
\leq
U_j^{\frac{n+2}{n-2}}
-
(PU_j)^{\frac{n+2}{n-2}}
\leq
C U_j^{\frac4{n-2}}h_j.
\ee

We first give the details of the interaction estimate used below.
By H\"older's inequality and \eqref{eq:twodiffbubble},
\[
\begin{aligned}
\int_\Omega
U_iU_j^{\frac4{n-2}}\,\ud x
&=
\int_\Omega
U_i^{\frac{n-2}{n+2}}\left(
U_iU_j^{\frac{n+2}{n-2}}
\right)^{\frac4{n+2}}
\,\ud x\\
&\leq
\left(
\int_\Omega U_i\,\ud x
\right)^{\frac{n-2}{n+2}}
\left(
\int_\Omega
U_iU_j^{\frac{n+2}{n-2}}\,\ud x
\right)^{\frac4{n+2}}
\\
&\le C\lambda_i^{-\frac{(n-2)^2}{2(n+2)}} \Lambda_{i,j}^{\frac4{n+2}}.
\end{aligned}
\]
This yields
\be\label{eq:linear-projection-error}
\begin{aligned}
\int_\Omega
|U_iU_j^{\frac4{n-2}}h_j|\,\ud x
\leq{}&
C
\Lambda_{i,j}^{\frac4{n+2}}
\lambda_i^{-\frac{(n-2)^2}{2(n+2)}}
\lambda_j^{-\frac{n-2}{2}}=
o(\Lambda_{i,j})
+
o(\lambda_i^{2-n})
+
o(\lambda_j^{2-n}),
\end{aligned}
\ee
where we used Young's inequality in the last inequality. Then it follows from \eqref{eq:Uj-PUj-power} that
\[
\begin{aligned}
\int_\Omega
U_i(PU_j)^{\frac{n+2}{n-2}}\,\ud x
={}&
\int_\Omega
U_iU_j^{\frac{n+2}{n-2}}\,\ud x+
o(\Lambda_{i,j})
+
o(\lambda_i^{2-n})
+
o(\lambda_j^{2-n})\\
={}&C_1c_0\Lambda_{i,j}
+
o(\Lambda_{i,j})
+
o(\lambda_i^{2-n})
+
o(\lambda_j^{2-n}).
\end{aligned}
\]

By
\eqref{eq:Uj-PUj-power},
\[
\begin{aligned}
&
\left|
\int_\Omega
h_i(PU_j)^{\frac{n+2}{n-2}}\,\ud x
-
\int_\Omega
h_iU_j^{\frac{n+2}{n-2}}\,\ud x
\right|\leq
C\int_\Omega
h_iU_j^{\frac4{n-2}}h_j\,\ud x.
\end{aligned}
\]
Note
\[
\begin{aligned}
\int_\Omega U_j^{\frac4{n-2}}\,\ud x
&\leq
C\lambda_j^{2-n}
\int_0^{\lambda_j\operatorname{diam}(\Omega)}
\frac{s^{n-1}}{(1+s^2)^2}\,\ud s
\leq
C\lambda_j^{-1}.
\end{aligned}
\]
Indeed, the last integral is bounded if $n=3$, is
$O(\log\lambda_j)$ if $n=4$, and is $O(\lambda_j^{n-4})$ if
$n\geq5$. Thus,
\[
\begin{aligned}
\int_\Omega
|h_iU_j^{\frac4{n-2}}h_j|\,\ud x
&\leq
C\lambda_i^{-\frac{n-2}{2}}
\lambda_j^{-\frac{n-2}{2}}
\int_\Omega U_j^{\frac4{n-2}}\,\ud x\\
&\leq
C\lambda_i^{-\frac{n-2}{2}}\lambda_j^{-\frac n2}\\
&=
o(\lambda_i^{2-n})+o(\lambda_j^{2-n}).
\end{aligned}
\]
Combining this estimate with \eqref{eq:remainderonebubble}, we find
\[
\begin{aligned}
\int_\Omega
h_i(PU_j)^{\frac{n+2}{n-2}}\,\ud x
={}&
C_1(\lambda_i\lambda_j)^{-\frac{n-2}{2}}
H(a_i,a_j)+
o(\lambda_i^{2-n})
+
o(\lambda_j^{2-n}).
\end{aligned}
\]
Therefore,
\be\label{eq:PUiPUj^p}
\begin{aligned}
\int_\Omega
PU_i(PU_j)^{\frac{n+2}{n-2}}\,\ud x
={}&
C_1c_0\Lambda_{i,j}-
C_1(\lambda_i\lambda_j)^{-\frac{n-2}{2}}
H(a_i,a_j)\\
&+
o(\Lambda_{i,j})
+
o(\lambda_i^{2-n})
+
o(\lambda_j^{2-n}).
\end{aligned}
\ee

It follows from \eqref{eq:sum-bubbles-power},
\eqref{eq:PUi^p+1}, and \eqref{eq:PUiPUj^p} that
\be\label{eq:sigma22}
\begin{aligned}
\int_\Omega
\sigma^{\frac{2n}{n-2}}\,\ud x
=
r_\infty^{-\frac n2}
\Bigg\{&
C_2\sum_{i=1}^{\ell}
\alpha_i^{\frac{2n}{n-2}}\\
&-
\frac{2n}{n-2}C_1
\sum_{i=1}^{\ell}
\alpha_i^{\frac{2n}{n-2}}
\lambda_i^{2-n}H(a_i,a_i)\\
&+
\frac{2n}{n-2}C_1
\sum_{i\neq j}
\alpha_i\alpha_j^{\frac{n+2}{n-2}}
\left(
c_0\Lambda_{i,j}
-
(\lambda_i\lambda_j)^{-\frac{n-2}{2}}
H(a_i,a_j)
\right)\\
&+
o\left(
\sum_{i\neq j}\Lambda_{i,j}
+
\sum_{i=1}^{\ell}\lambda_i^{2-n}
\right)
\Bigg\}.
\end{aligned}
\ee

Let
\[
K_1=\sum_{i=1}^{\ell}\alpha_i^2,
\qquad
K_2=\sum_{i=1}^{\ell}
\alpha_i^{\frac{2n}{n-2}}.
\]
Expanding the quotient by means of \eqref{eq:sigma1} and
\eqref{eq:sigma22}, we obtain
\[
\begin{aligned}
Y_\Omega(\sigma)
={}&
C_2^{\frac2n}K_1K_2^{-\frac{n-2}{n}}\\
&+
C_1C_2^{-\frac{n-2}{n}}
K_2^{-\frac{n-2}{n}}
\Bigg[
\sum_{i=1}^{\ell}
\left(
2\frac{K_1}{K_2}
\alpha_i^{\frac{2n}{n-2}}
-
\alpha_i^2
\right)
\lambda_i^{2-n}H(a_i,a_i)\\
&\hspace{17mm}
+
\sum_{i\neq j}
\left(
\alpha_i\alpha_j
-
2\frac{K_1}{K_2}
\alpha_i\alpha_j^{\frac{n+2}{n-2}}
\right)
\left(
c_0\Lambda_{i,j}
-
(\lambda_i\lambda_j)^{-\frac{n-2}{2}}
H(a_i,a_j)
\right)
\Bigg]\\
&+
o\left(
\sum_{i\neq j}\Lambda_{i,j}
+
\sum_{i=1}^{\ell}\lambda_i^{2-n}
\right).
\end{aligned}
\]

A Taylor expansion at
$\alpha_1=\cdots=\alpha_\ell=1$ gives
\[
K_1K_2^{-\frac{n-2}{n}}
=
\ell^{\frac2n}
+
O\left(\sum_{i=1}^{\ell}|\alpha_i-1|^2\right),
\]
\[
K_2^{-\frac{n-2}{n}}
=
\ell^{-\frac{n-2}{n}}
+
O\left(\sum_{i=1}^{\ell}|\alpha_i-1|\right),
\]
\[
2\frac{K_1}{K_2}
\alpha_i^{\frac{2n}{n-2}}
-
\alpha_i^2
=
1+
O\left(\sum_{k=1}^{\ell}|\alpha_k-1|\right),
\]
and, for $i\neq j$,
\[
\alpha_i\alpha_j
-
2\frac{K_1}{K_2}
\alpha_i\alpha_j^{\frac{n+2}{n-2}}
=
-1+
O\left(\sum_{k=1}^{\ell}|\alpha_k-1|\right).
\]
Since $H$ is uniformly bounded when
$d(a_i),d(a_j)\geq\delta_0$, 
we conclude that
\[
\begin{aligned}
Y_\Omega(\sigma)
={}&
C_2^{\frac2n}\ell^{\frac2n}
-
C_1c_0C_2^{-\frac{n-2}{n}}
\ell^{-\frac{n-2}{n}}
\sum_{i\neq j}\Lambda_{i,j}\\
&+
O\left(\sum_{i=1}^{\ell}|\alpha_i-1|^2\right)
+
o\left(\sum_{i\neq j}\Lambda_{i,j}\right)
+
O\left(\sum_{i=1}^{\ell}\lambda_i^{2-n}\right).
\end{aligned}
\]
Finally,
\[
C_2^{\frac2n}\ell^{\frac2n}=r_\infty
\]
and
\[
C_2^{-\frac{n-2}{n}}
\ell^{-\frac{n-2}{n}}
=
r_\infty^{-\frac{n-2}{2}},
\]
which proves \eqref{eq:energy-sigma-alpha}.

\end{proof}


\section{Selection of parameters and preliminary energy identities}\label{sec:modulation}

Fix $\ell\geq0$ and $\delta>0$. For any $v\in H_0^1(\Omega)$
and $\va>0$, we define
\begin{align*}
\A_v(u_\infty,\ell,\delta;\va)
:=
\Bigg\{
(\bm z,\bm\alpha,\bm a,\bm\lambda):
\;&
\alpha_i>0,\quad
\lambda_i>0,\quad
a_i\in\Omega,
\qquad 1\leq i\leq\ell,
\\
&
|\bm z|
+
\sum_{i=1}^{\ell}|\alpha_i-1|
+
\sum_{i=1}^{\ell}\frac1{\lambda_i}
+
\sum_{i\neq j}\Lambda_{i,j}
<\va,
\\
&
\left\|
v-\xi_{\bm z}
-r_\infty^{-\frac{n-2}{4}}
\sum_{k=1}^{\ell}
\alpha_kPU_{a_k,\lambda_k}
\right\|
<\va,
\\
&
\operatorname{dist}(a_k,\partial\Omega)\geq\delta,
\qquad 1\leq k\leq\ell
\Bigg\}.
\end{align*}
We then set
\[
\V(u_\infty,\ell,\delta;\va)
:=
\left\{
v\in H_0^1(\Omega):
\A_v(u_\infty,\ell,\delta;\va)\neq\emptyset
\right\}.
\]
When $u_\infty\equiv0$, the $\bm z$-component is omitted and
$\xi_{\bm z}\equiv0$. When $\ell=0$, all the bubble parameters and
the corresponding sums are omitted.

We regard two parameter tuples that differ only by a simultaneous
permutation of
\[
(\alpha_i,a_i,\lambda_i),
\qquad 1\leq i\leq\ell,
\]
as representing the same configuration.

Let $u\in\mathcal D\cap\V(u_\infty,\ell,\delta;\va)$ satisfy $\|\mathcal R_u-r_\infty\|_{C^0(\Omega)}<\va$. We impose the normalization \eqref{eq:normalizedmass}.
By adapting Proposition 0.7 of Bahri \cite{Bahri} or
Proposition 3.10 of Mayer \cite{Mayer} to the present Dirichlet
setting, we obtain the following parameter-selection statement. There exists $\va_1>0$ such that whenever $u\in \mathcal{D}\cap\V(u_\infty,\ell,\delta;\va)$ and satisfies $\|\mathcal R_u-r_\infty\|_{C^0(\Omega)}<\va$ with $0<\va<\va_1$, the variational problem
\be\label{eq:vp}
\inf_{{(\bm z,\bm\alpha,\bm a,\bm\lambda)}
\in\A_u(u_\infty,\ell,\delta;2\va_0)}
\left\|
 u-\xi_{\bm z}
-r_\infty^{-\frac{n-2}{4}}
\sum_{k=1}^{\ell}\alpha_kPU_{a_k,\lambda_k}
\right\|_{\mathcal L_u(\Omega)}^2
\ee
admits a minimizer in
$\A_u(u_\infty,\ell,\delta;\va_0)$. The minimizer is unique
modulo permutations of the bubble indices.  After fixing a local
labeling, let $(\bm z,\bm\alpha,\bm a,\bm\lambda)$ denote this
minimizer and set
\be\label{eq:decomposition-1}
\sigma
=
\xi_{\bm z}
+r_\infty^{-\frac{n-2}{4}}
\sum_{k=1}^{\ell}\alpha_kPU_{a_k,\lambda_k},
\qquad
w=u-\sigma.
\ee

Since the weight $u^{\frac4{n-2}}$ is fixed in the variational
problem \eqref{eq:vp}, differentiation with respect to the
parameters gives
\be\label{eq:w-z-orthogonality}
\int_\Omega
u^{\frac4{n-2}}w
\frac{\partial\xi_{\bm z}}{\partial z_l}
\,\ud x
=0,
\qquad 1\leq l\leq L,
\ee
and
\be\label{eq:w-bubble-orthogonality}
\begin{aligned}
\int_\Omega
u^{\frac4{n-2}}w\,PU_i\,\ud x
&=0,
\\
\int_\Omega
u^{\frac4{n-2}}w\,
\lambda_i\frac{\partial PU_i}{\partial\lambda_i}
\,\ud x
&=0,
\\
\int_\Omega
u^{\frac4{n-2}}w\,
\frac1{\lambda_i}
\frac{\partial PU_i}{\partial a_i^m}
\,\ud x
&=0,
\qquad 1\leq m\leq n,
\end{aligned}
\qquad 1\leq i\leq\ell.
\ee
Here \eqref{eq:w-z-orthogonality} is omitted when
$u_\infty\equiv0$, and \eqref{eq:w-bubble-orthogonality} is omitted
when $\ell=0$.

The remainder $w$ lies in the directions on which the second
variation is coercive.

\begin{prop}[Coercivity]\label{prop:Coercive-1}
After decreasing $\va_0$ if necessary, there exists a constant
$c>0$, depending only on $n$, $\Omega$, $\ell$, $\delta$,
$u_\infty$ and  $r_\infty$,
such that, for every $\va\in(0,\va_0)$ and for every decomposition \eqref{eq:decomposition-1}
obtained above,
\[
\frac{n+2}{n-2}r_\infty
\int_\Omega
\sigma^{\frac4{n-2}}w^2\,\ud x
\leq
(1-c)\|w\|^2,
\]
and
\[
\frac{n+2}{n-2}r_\infty
\|w\|_{\mathcal L_u(\Omega)}^2
\leq
(1-c)\|w\|^2.
\]
\end{prop}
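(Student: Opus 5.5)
Our plan is to argue by contradiction combined with a concentration-compactness (localization) analysis; this is the standard route for such coercivity statements (cf.\ Bahri \cite{Bahri}, Rey \cite{Rey}). Both inequalities should come from a single argument, since the weights agree to leading order: $u=\sigma+w$ with $\|w\|<\va$, and $\|\mathcal R_u-r_\infty\|_{C^0}<\va$ together with the decomposition gives $\sigma/u\to1$ in a suitable sense. Once the first inequality is proved, the second follows from
\[
\int_\Omega\left|u^{\frac4{n-2}}-\sigma^{\frac4{n-2}}\right|w^2\,\ud x\le C\|w\|_{L^{\frac{2n}{n-2}}}^{2}\left\|u^{\frac4{n-2}}-\sigma^{\frac4{n-2}}\right\|_{L^{\frac n2}}=o(1)\|w\|^2 .
\]
Here we use Hölder and Sobolev, together with $\|u-\sigma\|_{L^{2n/(n-2)}}\le C\va$ and the elementary inequality $|a^{q}-b^{q}|\le |a-b|^{q}$ for $q=\frac4{n-2}\le1$ when $n\ge6$, or the mean value theorem when $n<6$. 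We also need to convert the orthogonality conditions: \eqref{eq:w-z-orthogonality} and \eqref{eq:w-bubble-orthogonality} hold with the weight $u^{\frac4{n-2}}$, and we want them with the weight $\sigma^{\frac4{n-2}}$, or equivalently in the $H_0^1$ inner product. For the bubble directions we use $-\Delta PU_i=U_i^{\frac{n+2}{n-2}}$ and the analogous identities for $\lambda_i\partial_{\lambda_i}PU_i$ and $\lambda_i^{-1}\partial_{a_i}PU_i$. For the regular directions we use \eqref{eq:phil} and $\partial_{z_l}\xi_{\bm z}=\phi_l+o(1)$. In each case the conversion produces an error of size $o(1)\|w\|$. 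Thus $w$ is almost $H_0^1$-orthogonal to $\phi_1,\dots,\phi_L$ and to $PU_i$, $\lambda_i\partial_{\lambda_i}PU_i$, $\lambda_i^{-1}\partial_{a_i}PU_i$.

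Suppose the first inequality fails. Then there are sequences $\va_k\to0$ and decompositions with $\|w_k\|=1$ such that
\[
\frac{n+2}{n-2}r_\infty\int_\Omega\sigma_k^{\frac4{n-2}}w_k^2\,\ud x\ge1-o(1).
\]
Pass to a subsequence and split $\Omega$ into a region away from the centers $a_i$, where $\sigma_k\to u_\infty$ up to terms going to zero in $L^{n/2}$, and bubble regions around each $a_i$. The bubble regions should be chosen adapted to the bubble tree, for instance via the partition $A_i$ used in the proof of Proposition~\ref{prop:quantif-1}. The cross terms are negligible because
\[
\sigma^{\frac4{n-2}}\le C\left(u_\infty^{\frac4{n-2}}+\sum_i U_i^{\frac4{n-2}}\right)
\]
and $\int U_i^{\frac4{n-2}}U_j^{\frac4{n-2}}$-type interactions tend to zero in the appropriate scaled sense as $\bm\Lambda\to0$. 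Using cutoff functions we write $w_k=w_k^0+\sum_i w_k^i+o(1)$ in $H_0^1$, with $\|w_k^0\|^2+\sum_i\|w_k^i\|^2=1+o(1)$. We then need two spectral inputs. For the regular part, the quadratic form
\[
\|v\|^2-\frac{n+2}{n-2}r_\infty\int u_\infty^{\frac4{n-2}}v^2
\]
on $\{\phi_1,\dots,\phi_L\}^{\perp}$ is bounded below by $(1-\frac{n+2}{n-2}r_\infty/\mu_{L+1})\|v\|^2$, and $\mu_{L+1}>\frac{n+2}{n-2}r_\infty$ by the definition of $L$. For each bubble part, after rescaling $x\mapsto a_i+x/\lambda_i$, we use the nondegeneracy of $U$ due to Bianchi--Egnell and Rey: on $\dot H^1(\mathbb R^n)$, orthogonal to $U$, $\partial_\lambda U$ and $\partial_a U$, one has
\[
\int|\nabla v|^2-\frac{n+2}{n-2}\int U^{\frac4{n-2}}v^2\ge c\int|\nabla v|^2 .
\]
The next eigenvalue of $-U^{-\frac4{n-2}}\Delta$ is strictly bigger than $\frac{n+2}{n-2}$. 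The factor $r_\infty$ cancels because the bubble in $\sigma$ carries the prefactor $r_\infty^{-\frac{n-2}{4}}$. Summing the localized inequalities then contradicts the assumption.

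The main obstacle we expect is the localization step. The cutoffs must be placed on annuli of intermediate scale, say between $\lambda_i^{-1}R$ and $\delta/R$, and also between neighboring bubbles in a cluster. On these annuli the gradient errors $\int|\nabla\chi|^2w^2$ must be made small, for example by choosing logarithmic cutoffs or by selecting a good annulus via pigeonholing among many dyadic ones. At the same time the almost-orthogonality must survive the cutting, and when $n=3,4,5$ the slow decay of $\partial_\lambda U$ makes the truncation errors on the orthogonality directions delicate.
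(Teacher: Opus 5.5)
Your plan is essentially the standard contradiction argument behind the references the paper cites for this proposition (Brendle's Corollaries 5.5 and 6.10, Mayer, Jin--Xiong). You convert the $u^{\frac4{n-2}}$-weighted orthogonality into approximate $H_0^1$-orthogonality, separate the regular part from the rescaled bubble parts, use the gap $\mu_{L+1}>\frac{n+2}{n-2}r_\infty$ on $\{\phi_1,\dots,\phi_L\}^{\perp}$ together with the nondegeneracy of $U$, and pass between the weights $\sigma^{\frac4{n-2}}$ and $u^{\frac4{n-2}}$ via $\|u^{\frac4{n-2}}-\sigma^{\frac4{n-2}}\|_{L^{n/2}(\Omega)}=o(1)$.

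The cutoff localization you flag as the main obstacle can be avoided altogether. Normalize $\|w_k\|=1$ and take weak limits: $w^0$ of $w_k$, and $w^i$ of $\lambda_i^{-\frac{n-2}{2}}w_k(a_i+\cdot/\lambda_i)$. Then $r_\infty\int_\Omega\sigma^{\frac4{n-2}}w_k^2\,\ud x\to r_\infty\int_\Omega u_\infty^{\frac4{n-2}}(w^0)^2\,\ud x+\sum_i\int_{\mathbb R^n}U^{\frac4{n-2}}(w^i)^2\,\ud x$, and the limits satisfy the orthogonality conditions exactly. Asymptotic orthogonality of the profiles (from $\lambda_i\to\infty$ and $\bm\Lambda\to0$) gives the inequality $\|w^0\|^2+\sum_i\int_{\mathbb R^n}|\nabla w^i|^2\,\ud x\le1$, and that inequality is all that is needed.
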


\begin{proof}
For $u_\infty=0$, refer to the proofs of Corollary 5.5 in \cite{Br05}, Proposition 4.5 in \cite{Mayer}, and Lemma 4.3 in \cite{JX20-a}; for $u_\infty>0$, consult Corollary 6.10 in \cite{Br05}, Proposition 5.5 in \cite{Mayer}, and Corollary 4.13 in \cite{JX20-a}.
\end{proof}

\subsection{Auxiliary estimates and preliminary identities}

The following elementary estimates will be used repeatedly. For
$a>0$, $a+b\geq0$, and
$\delta(n)=\min\{1,4/(n-2)\}$,
\[
\begin{aligned}
&\left|
(a+b)^{\frac{n+2}{n-2}}
-a^{\frac{n+2}{n-2}}
-\frac{n+2}{n-2}a^{\frac4{n-2}}b
\right|\leq
C a^{\frac4{n-2}-\delta(n)}|b|^{1+\delta(n)}
+C|b|^{\frac{n+2}{n-2}}.
\end{aligned}
\]
Also, by H\"older's inequality,
\[
\begin{aligned}
&\left|
\int_\Omega
(\mathcal R_u-r_\infty)
 u^{\frac{n+2}{n-2}}\psi\,\ud x
\right|\leq
\left(
\int_\Omega
|\mathcal R_u-r_\infty|^{\frac{2n}{n+2}}
 u^{\frac{2n}{n-2}}\,\ud x
\right)^{\frac{n+2}{2n}}
\|\psi\|_{L^{\frac{2n}{n-2}}(\Omega)}.
\end{aligned}
\]
Finally, the normalization of $u=\sigma+w$ and the triangle inequality imply $$1-\|w\|_{L^{\frac{2n}{n-2}}(\Omega)}\le \|\sigma\|_{L^{\frac{2n}{n-2}}(\Omega)}\le 1+ \|w\|_{L^{\frac{2n}{n-2}}(\Omega)}.$$ Hence,
\be\label{eq:mass-comparison}
\left|
\int_\Omega\sigma^{\frac{2n}{n-2}}\,\ud x-1
\right|
\leq
C\|w\|_{L^{\frac{2n}{n-2}}(\Omega)},
\ee
since $\|w\|$ is small. 
Consequently,
\be\label{eq:mass-factor-expansion}
\begin{aligned}
&\left(
\int_\Omega\sigma^{\frac{2n}{n-2}}\,\ud x
\right)^{\frac{n-2}{n}}-1=
\frac{n-2}{n}
\left(
\int_\Omega\sigma^{\frac{2n}{n-2}}\,\ud x-1
\right)
+O\left(\|w\|_{L^{\frac{2n}{n-2}}(\Omega)}^2\right).
\end{aligned}
\ee

We shall now expand the energy $Y_\Omega(u)$. Since
\[
\mathcal R_u
=
-u^{-\frac{n+2}{n-2}}\Delta u\quad
\mbox{and}\quad
\int_\Omega u^{\frac{2n}{n-2}}\,\ud x=1,
\]
we have
\[
Y_\Omega(u)
=
\int_\Omega|\nabla u|^2\,\ud x
=
\int_\Omega
\mathcal R_u u^{\frac{2n}{n-2}}\,\ud x.
\]
Moreover, since $u=\sigma+w$,
\[
\begin{aligned}
\int_\Omega|\nabla u|^2\,\ud x
&=
\int_\Omega|\nabla\sigma|^2\,\ud x
+
2\int_\Omega\nabla\sigma\cdot\nabla w\,\ud x
+
\int_\Omega|\nabla w|^2\,\ud x\\
&=
\int_\Omega|\nabla\sigma|^2\,\ud x
+
2\int_\Omega
\mathcal R_u u^{\frac{n+2}{n-2}}w\,\ud x
-
\int_\Omega|\nabla w|^2\,\ud x.
\end{aligned}
\]
Consequently,
\[
\begin{aligned}
\int_\Omega
\mathcal R_u u^{\frac{2n}{n-2}}\,\ud x
={}&
Y_\Omega(\sigma)
\left(
\int_\Omega
\sigma^{\frac{2n}{n-2}}\,\ud x
\right)^{\frac{n-2}{n}}+
2\int_\Omega
(\mathcal R_u-r_\infty)
u^{\frac{n+2}{n-2}}w\,\ud x\\
&-
\int_\Omega
\left(
|\nabla w|^2
-
\frac{n+2}{n-2}r_\infty
\sigma^{\frac4{n-2}}w^2
\right)\,\ud x\\
&+
r_\infty
\int_\Omega
\left[
-\frac{n+2}{n-2}
\sigma^{\frac4{n-2}}w^2
+
2(\sigma+w)^{\frac{n+2}{n-2}}w
\right]\,\ud x.
\end{aligned}
\]

We next expand the normalization factor. By \eqref{eq:mass-factor-expansion},
\[
\begin{aligned}
&
\left(
\int_\Omega
\sigma^{\frac{2n}{n-2}}\,\ud x
\right)^{\frac{n-2}{n}}
-1=
\frac{n-2}{n}
\left(
\int_\Omega
\sigma^{\frac{2n}{n-2}}\,\ud x
-1
\right)
+
O\left(
\|w\|_{L^{\frac{2n}{n-2}}(\Omega)}^2
\right).
\end{aligned}
\]
Using
\[
1
=
\int_\Omega
(\sigma+w)^{\frac{2n}{n-2}}\,\ud x,
\]
we equivalently have
\[
\begin{aligned}
-1
={}&
\int_\Omega
\left[
\frac{n-2}{n}
\sigma^{\frac{2n}{n-2}}
-
\frac{n-2}{n}
(\sigma+w)^{\frac{2n}{n-2}}
\right]\,\ud x-
\left(
\int_\Omega
\sigma^{\frac{2n}{n-2}}\,\ud x
\right)^{\frac{n-2}{n}}
+
O\left(
\|w\|_{L^{\frac{2n}{n-2}}(\Omega)}^2
\right).
\end{aligned}
\]

Substituting this identity into the preceding expansion, we obtain
\be\label{eq:Ruup0}
\begin{aligned}
&
\int_\Omega
\mathcal R_u u^{\frac{2n}{n-2}}\,\ud x\\
={}&
r_\infty
+
\big(Y_\Omega(\sigma)-r_\infty\big)
\left(
\int_\Omega
\sigma^{\frac{2n}{n-2}}\,\ud x
\right)^{\frac{n-2}{n}}+
2\int_\Omega
(\mathcal R_u-r_\infty)
u^{\frac{n+2}{n-2}}w\,\ud x\\
&-
\int_\Omega
\left(
|\nabla w|^2
-
\frac{n+2}{n-2}r_\infty
\sigma^{\frac4{n-2}}w^2
\right)\,\ud x\\
&+
r_\infty
\int_\Omega
\Bigg[
\frac{n-2}{n}
\sigma^{\frac{2n}{n-2}}
-
\frac{n+2}{n-2}
\sigma^{\frac4{n-2}}w^2
+
2(\sigma+w)^{\frac{n+2}{n-2}}w
-
\frac{n-2}{n}
(\sigma+w)^{\frac{2n}{n-2}}
\Bigg]\,\ud x\\
&+
O\left(
\|w\|_{L^{\frac{2n}{n-2}}(\Omega)}^2
\right).
\end{aligned}
\ee

By H\"older's inequality,
\be\label{eq:Ruup1}
\begin{aligned}
&
\left|
\int_\Omega
(\mathcal R_u-r_\infty)
u^{\frac{n+2}{n-2}}w\,\ud x
\right|\leq
\left(
\int_\Omega
|\mathcal R_u-r_\infty|^{\frac{2n}{n+2}}
u^{\frac{2n}{n-2}}\,\ud x
\right)^{\frac{n+2}{2n}}
\|w\|_{L^{\frac{2n}{n-2}}(\Omega)}.
\end{aligned}
\ee
Moreover,
\[
\begin{aligned}
&
\left|
\int_\Omega
\left(
|\nabla w|^2
-
\frac{n+2}{n-2}r_\infty
\sigma^{\frac4{n-2}}w^2
\right)\,\ud x
\right|\leq
C\|w\|^2.
\end{aligned}
\]

We next estimate the nonlinear remainder. For every $\sigma\geq0$
and $w\in\mathbb R$ satisfying $\sigma+w\geq0$, one has from Taylor expansion that 
\[
\begin{aligned}
&
\Bigg|
\frac{n-2}{n}
\sigma^{\frac{2n}{n-2}}
-
\frac{n+2}{n-2}
\sigma^{\frac4{n-2}}w^2
+
2(\sigma+w)^{\frac{n+2}{n-2}}w
-
\frac{n-2}{n}
(\sigma+w)^{\frac{2n}{n-2}}
\Bigg|\\
&\quad\leq
C
\sigma^{\max\{0,\frac{2n}{n-2}-3\}}
|w|^{\min\{\frac{2n}{n-2},3\}}
+
C|w|^{\frac{2n}{n-2}}.
\end{aligned}
\]
Indeed, on the set where $|w|\leq\frac12\sigma$, this follows by
Taylor expansion: the terms of orders zero, one, and two cancel
exactly. On the complementary set, $\sigma\leq2|w|$, and the
right-hand side follows from the elementary bound
\[
(\sigma+|w|)^{\frac{2n}{n-2}}
\leq C|w|^{\frac{2n}{n-2}}.
\]
Consequently, by H\"older's inequality and the uniform boundedness
of $
\int_\Omega
\sigma^{\frac{2n}{n-2}}\,\ud x$,
we obtain
\be\label{eq:Ruup2}
\begin{aligned}
&
\int_\Omega
\Bigg|
\frac{n-2}{n}
\sigma^{\frac{2n}{n-2}}
-
\frac{n+2}{n-2}
\sigma^{\frac4{n-2}}w^2
+
2(\sigma+w)^{\frac{n+2}{n-2}}w
-
\frac{n-2}{n}
(\sigma+w)^{\frac{2n}{n-2}}
\Bigg|\,\ud x\\
&\quad\leq
C
\left(
\int_\Omega
|w|^{\frac{2n}{n-2}}\,\ud x
\right)^{
\frac{n-2}{n}
\min\{\frac{n}{n-2},\frac32\}
}\\
&\quad=C\|w\|^{\min\left\{
\frac{2n}{n-2},3
\right\}}\\
&\quad\le C \|w\|^2,
\end{aligned}
\ee
since
$
\min\left\{
\frac{2n}{n-2},3
\right\}>2.
$

Combining \eqref{eq:Ruup0}--\eqref{eq:Ruup2} and using Young's
inequality, we obtain
\be\label{eq:Ruup}
\begin{aligned}
Y_\Omega(u)
={}&
r_\infty
+
\big(Y_\Omega(\sigma)-r_\infty\big)
\left(
\int_\Omega
\sigma^{\frac{2n}{n-2}}\,\ud x
\right)^{\frac{n-2}{n}}\\
&+
O\left(
\left(
\int_\Omega
|\mathcal R_u-r_\infty|^{\frac{2n}{n+2}}
u^{\frac{2n}{n-2}}\,\ud x
\right)^{\frac{n+2}{n}}
\right)
+
O(\|w\|^2).
\end{aligned}
\ee

We also need a one-sided version of \eqref{eq:Ruup}. Since the
function $t\mapsto t^{\frac{n-2}{n}}$ is concave on $(0,\infty)$,
\[
\left(
\int_\Omega
\sigma^{\frac{2n}{n-2}}\,\ud x
\right)^{\frac{n-2}{n}}
-1
\leq
\frac{n-2}{n}
\left(
\int_\Omega
\sigma^{\frac{2n}{n-2}}\,\ud x
-1
\right).
\]
Thus,
\[
\begin{aligned}
-1
\leq{}&
\int_\Omega
\left[
\frac{n-2}{n}
\sigma^{\frac{2n}{n-2}}
-
\frac{n-2}{n}
(\sigma+w)^{\frac{2n}{n-2}}
\right]\,\ud x-
\left(
\int_\Omega
\sigma^{\frac{2n}{n-2}}\,\ud x
\right)^{\frac{n-2}{n}}.
\end{aligned}
\]
Arguing as in the derivation of \eqref{eq:Ruup0}, we obtain
\[
\begin{aligned}
Y_\Omega(u)
\leq{}&
r_\infty
+
\big(Y_\Omega(\sigma)-r_\infty\big)
\left(
\int_\Omega
\sigma^{\frac{2n}{n-2}}\,\ud x
\right)^{\frac{n-2}{n}}\\
&+
2\int_\Omega
(\mathcal R_u-r_\infty)
u^{\frac{n+2}{n-2}}w\,\ud x\\
&-
\int_\Omega
\left(
|\nabla w|^2
-
\frac{n+2}{n-2}r_\infty
\sigma^{\frac4{n-2}}w^2
\right)\,\ud x\\
&+
r_\infty
\int_\Omega
\Bigg[
\frac{n-2}{n}
\sigma^{\frac{2n}{n-2}}
-
\frac{n+2}{n-2}
\sigma^{\frac4{n-2}}w^2
+
2(\sigma+w)^{\frac{n+2}{n-2}}w
-
\frac{n-2}{n}
(\sigma+w)^{\frac{2n}{n-2}}
\Bigg]\,\ud x.
\end{aligned}
\]
By Proposition \ref{prop:Coercive-1},
\[
\int_\Omega
\left(
|\nabla w|^2
-
\frac{n+2}{n-2}r_\infty
\sigma^{\frac4{n-2}}w^2
\right)\,\ud x
\geq
c\|w\|^2.
\]
Using \eqref{eq:Ruup1}, \eqref{eq:Ruup2}, the Sobolev inequality,
and Young's inequality, and then decreasing $\va_0$ if necessary,
we conclude that
\be\label{eq:Ruup<}
\begin{aligned}
Y_\Omega(u)
\leq{}&
r_\infty
+
\big(Y_\Omega(\sigma)-r_\infty\big)
\left(
\int_\Omega
\sigma^{\frac{2n}{n-2}}\,\ud x
\right)^{\frac{n-2}{n}}\\
&+
C\left(
\int_\Omega
|\mathcal R_u-r_\infty|^{\frac{2n}{n+2}}
u^{\frac{2n}{n-2}}\,\ud x
\right)^{\frac{n+2}{n}}
-
c\|w\|^2.
\end{aligned}
\ee

The identities \eqref{eq:Ruup} and \eqref{eq:Ruup<} are
preliminary: at this stage no final estimate for the modulation
parameters has been used. Their consequences for the energy gap
will be derived after Theorem \ref{thm:LS} has been proved.

\subsection{Estimates for the regular component}

We first need estimates for $u-\xi_{\bm z}$ in
$L^{\frac{n+2}{n-2}}(\Omega)$ and $L^1(\Omega)$. The following
lemma is obtained by adapting the proofs of Lemmas 4.14 and 4.15 of
\cite{JX20-a} to the present setting.

\begin{lem}\label{lem:n+2norm}
There exist constants $C>0$ and $\va_1\in(0,1)$ such that, for every $\va\in(0,\va_1)$, and $u\in \mathcal{D}\cap\V(u_\infty, \ell,\delta; \va)$ with $\|\mathcal{R}_{u}-r_\infty\|_{C^0(\om)}<\va$,  there holds
\[
\begin{aligned}
\|u-\xi_{\bm z}\|_{L^{\frac{n+2}{n-2}}(\Omega)}
 ^{\frac{n+2}{n-2}}
\leq{}&
C
\left\|
(\mathcal R_u-r_\infty)u^{\frac{n+2}{n-2}}
\right\|_{L^{\frac{2n}{n+2}}(\Omega)}
 ^{\frac{n+2}{n-2}}+
C\sum_{k=1}^{\ell}
\lambda_k^{-\frac{n-2}{2}},
\end{aligned}
\]
and
\[
\begin{aligned}
\|u-\xi_{\bm z}\|_{L^1(\Omega)}
\leq{}&
C
\left\|
(\mathcal R_u-r_\infty)u^{\frac{n+2}{n-2}}
\right\|_{L^{\frac{2n}{n+2}}(\Omega)}+
C\sum_{k=1}^{\ell}
\lambda_k^{-\frac{n-2}{2}}.
\end{aligned}
\]
\end{lem}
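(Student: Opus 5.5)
We apply Lemma~\ref{lem:4.7-8-jx}(ii) to $f:=u-\xi_{\bm z}\in W^{2,p}(\Omega)\cap W_0^{1,p}(\Omega)$, where $(\bm z,\bm\alpha,\bm a,\bm\lambda)$ is the minimizer selected in \eqref{eq:vp}. Write
\[
\Delta f+\tfrac{n+2}{n-2}r_\infty u_\infty^{\frac4{n-2}}f
=\big(\Delta u+r_\infty u^{\frac{n+2}{n-2}}\big)-\big(\Delta\xi_{\bm z}+r_\infty\xi_{\bm z}^{\frac{n+2}{n-2}}\big)-r_\infty N,
\]
where
\[
N:=u^{\frac{n+2}{n-2}}-\xi_{\bm z}^{\frac{n+2}{n-2}}-\tfrac{n+2}{n-2}u_\infty^{\frac4{n-2}}f .
\]
Applying $\Pi$ kills the second bracket by \eqref{eq:projection0}. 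The first term is $-(\mathcal R_u-r_\infty)u^{\frac{n+2}{n-2}}$. Its $L^{\frac{n(n+2)}{n^2+4}}$ and $L^1$ norms are bounded by its $L^{\frac{2n}{n+2}}$ norm, since $\Omega$ is bounded and $\frac{n(n+2)}{n^2+4}\le\frac{2n}{n+2}$. Since $\Pi$ is bounded on $L^p$, this leaves $\|N\|$ and the finitely many moments $\int u_\infty^{\frac4{n-2}}\phi_l f$.

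The moments are handled through the bubble content of $u$. By the definition of $\mathcal V$, $f=r_\infty^{-\frac{n-2}{4}}\sum_k\alpha_kPU_k+\tilde w$, where $\tilde w$ is the $H_0^1$-small remainder. First, $\int u_\infty^{\frac4{n-2}}\phi_l PU_k\le C\|PU_k\|_{L^1}\le C\lambda_k^{-\frac{n-2}{2}}$, because $u_\infty,\phi_l$ are bounded. The $\tilde w$-moments must be controlled by the right-hand side rather than merely by $\varepsilon$. To do this we use the orthogonality \eqref{eq:w-z-orthogonality}, where $\partial_{z_l}\xi_{\bm z}=\phi_l+O(|\bm z|)$, and replace the weight $u^{\frac4{n-2}}$ by $u_\infty^{\frac4{n-2}}$. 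The error of this replacement is bounded by $\int|u^{\frac4{n-2}}-u_\infty^{\frac4{n-2}}||w|$. Using $|\phi_l|\le Cd$, this is at most a small multiple of $\|f\|_{L^1}+\|f\|_{L^{\frac{n+2}{n-2}}}$ plus bubble terms $C\sum_k\lambda_k^{-\frac{n-2}{2}}$. The small multiple is then absorbed into the left-hand side. This is the adaptation of \cite[Lemma 4.14]{JX20-a}.

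The main obstacle is the nonlinear term $N$. We split it as
\[
N=\Big(u^{\frac{n+2}{n-2}}-\xi_{\bm z}^{\frac{n+2}{n-2}}-\tfrac{n+2}{n-2}\xi_{\bm z}^{\frac4{n-2}}f\Big)+\tfrac{n+2}{n-2}\big(\xi_{\bm z}^{\frac4{n-2}}-u_\infty^{\frac4{n-2}}\big)f .
\]
The second piece is $O(|\bm z|)|f|$ pointwise, since $\xi_{\bm z}/u_\infty\in[1/2,2]$ and $\xi_{\bm z}$ depends analytically on $\bm z$. It is therefore absorbed.

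For the first piece we use the elementary estimate of the auxiliary subsection: it is bounded by $C\xi_{\bm z}^{\frac4{n-2}-\delta(n)}|f|^{1+\delta(n)}+C|f|^{\frac{n+2}{n-2}}$. In the $L^1$ case, the term $\int|f|^{\frac{n+2}{n-2}}$ is not small relative to $\|f\|_{L^1}$. We therefore split $f$ into its bubble part and the rest. The bubble part gives $\int U_k^{\frac{n+2}{n-2}}\sim\lambda_k^{-\frac{n-2}{2}}$, which is exactly the allowed error. The remainder contributes $\|\tilde w\|_{L^{\frac{2n}{n-2}}}^{\frac4{n-2}}$ times $\|\tilde w\|_{L^1}$-type quantities, which are small multiples after Hölder. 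The same bookkeeping is done in the $L^{\frac{n(n+2)}{n^2+4}}$ norm for the first inequality. There, the exponents are chosen so that $\|\,|f|^{\frac{n+2}{n-2}}\|_{L^{\frac{n(n+2)}{n^2+4}}}$ splits into bubble pieces, each of order $\lambda_k^{-\frac{n-2}{2}}$ after raising the inequality to the power $\frac{n+2}{n-2}$, plus a small multiple of $\|f\|_{L^{\frac{n+2}{n-2}}}$ times a power of $\varepsilon$. Combining with Lemma~\ref{lem:4.7-8-jx}(ii), absorbing the small multiples, and raising to the power $\frac{n+2}{n-2}$ gives the first estimate. The $L^1$ version of the lemma gives the second.

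We expect the delicate part to be verifying that the bubble contributions indeed scale as $\lambda_k^{-\frac{n-2}{2}}$ in these specific norms. If a direct bound fails in some dimension, the first thing to try is to use the cruder pointwise bound $PU_k\le U_k$ together with the explicit radial integrals of $U_k$.
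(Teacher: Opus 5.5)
Your argument for the first inequality is sound and is essentially the intended adaptation. You apply Lemma~\ref{lem:4.7-8-jx}(ii) to $f=u-\xi_{\bm z}$ and let $\Pi$ annihilate the $\xi_{\bm z}$-equation. The bubble pieces of $|f|^{\frac{n+2}{n-2}}$ have size $\lambda_k^{-\frac{(n-2)^2}{2(n+2)}}$ in $L^{\frac{n(n+2)}{n^2+4}}$, and the remainder satisfies $\||w|^{\frac{n+2}{n-2}}\|_{L^{\frac{n(n+2)}{n^2+4}}}\le\|w\|_{L^{\frac{2n}{n-2}}}^{\frac4{n-2}}\|w\|_{L^{\frac{n+2}{n-2}}}$. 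The paper only cites \cite{JX20-a}; since $\bm z$ here comes from the joint minimization \eqref{eq:vp}, the moments do not vanish by construction, and controlling them through \eqref{eq:w-z-orthogonality} is a reasonable way to handle them. For $n=3,4,5$ you still have to dispose of the quadratic term $\xi_{\bm z}^{\frac{6-n}{n-2}}f^2$, e.g.\ via $f^2\le\eta|f|+C_\eta|f|^{\frac{n+2}{n-2}}$, but that is routine.

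The genuine gap is the $L^1$ estimate, which you treat as a parallel, independent argument; it cannot be closed that way.

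\emph{The nonlinear term.} You claim the remainder contributes ``$\|w\|_{L^{2n/(n-2)}}^{4/(n-2)}$ times $\|w\|_{L^1}$-type quantities.'' This is false. H\"older only gives $\int|w|^{\frac{n+2}{n-2}}\le\|w\|_{L^{\frac{2n}{n-2}}}^{\frac4{n-2}}\|w\|_{L^{\frac n{n-2}}}$. Moreover, for $w=\eta\mu^{\frac{n-2}{2}}\varphi(\mu(x-x_0))$ one has $\int|w|^{\frac{n+2}{n-2}}\big/\int|w|\sim\eta^{\frac4{n-2}}\mu^2\to\infty$ while $\|w\|$ stays small. So no bound by a small multiple of $\|w\|_{L^1}$ exists.

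\emph{The moments.} The same issue appears there. A small multiple of $\|f\|_{L^{\frac{n+2}{n-2}}}$ cannot be absorbed into an $L^1$ left-hand side. If you instead bound it by the first inequality with a generic small constant, you only get an error of order $\lambda_k^{-\frac{(n-2)^2}{2(n+2)}}$, which is much worse than the required $\lambda_k^{-\frac{n-2}{2}}$.

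\emph{The missing idea.} The second inequality must be derived \emph{from} the first, which is why the first is stated in raised form. It gives
\[
\int_\Omega|f|^{\frac{n+2}{n-2}}\le C\mathfrak G(u)^{\frac{n+2}{n-2}}+C\sum_k\lambda_k^{-\frac{n-2}{2}}\le C\mathfrak G(u)+C\sum_k\lambda_k^{-\frac{n-2}{2}},
\]
using $\mathfrak G(u)<\va<1$, and the same bound holds for $\int|w|^{\frac{n+2}{n-2}}$. This controls $\|N\|_{L^1}$ and the $|w|^{\frac{n+2}{n-2}}$ part of $\int|u^{\frac4{n-2}}-u_\infty^{\frac4{n-2}}||w|$. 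The remaining cross term must be split by Young's inequality with exponents $\frac{n+2}{4}$ and $\frac{n+2}{n-2}$:
\[
\int(PU_k)^{\frac4{n-2}}|f|\le\|PU_k\|_{L^{\frac{n+2}{n-2}}}^{\frac4{n-2}}\|f\|_{L^{\frac{n+2}{n-2}}}\le C\int(PU_k)^{\frac{n+2}{n-2}}+C\int|f|^{\frac{n+2}{n-2}}.
\]
Both terms on the right are of the allowed size. With these substitutions the $L^1$ bound closes with the sharp error $\sum_k\lambda_k^{-\frac{n-2}{2}}$.
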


\begin{prop}\label{prop:regular-part}
There exist constants $C>0$ and $\va_1\in(0,1)$  such that, for every
$\va\in(0,\va_1)$, and $u\in \mathcal{D}\cap\V(u_\infty, \ell,\delta; \va)$ with $\|\mathcal{R}_{u}-r_\infty\|_{C^0(\om)}<\va$,  there holds
\[
\begin{aligned}
\left|
Y_\Omega(\xi_{\bm z})-Y_\Omega(u_\infty)
\right|
\leq{}&
C
\left(
\int_\Omega
|\mathcal R_u-r_\infty|^{\frac{2n}{n+2}}
u^{\frac{2n}{n-2}}\,\ud x
\right)^{\frac{n+2}{2n}(1+\gamma)}+
C\sum_{k=1}^{\ell}
\lambda_k^{-\frac{n-2}{2}(1+\gamma)},
\end{aligned}
\]
where $\gamma\in(0,1)$ is the exponent in
Lemma \ref{lem:L-ineq}.
\end{prop}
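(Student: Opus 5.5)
By Lemma \ref{lem:L-ineq}, applied to $\xi_{\bm z}$ (note $|\bm z|<\va<\delta_2$ once $\va_1$ is small), it suffices to show that for each $1\le l\le L$,
\[
\left|\int_\Omega\left(\Delta\xi_{\bm z}+r_\infty\xi_{\bm z}^{\frac{n+2}{n-2}}\right)\phi_l\,\ud x\right|
\le C\,\mathfrak G(u)+C\sum_{k=1}^{\ell}\lambda_k^{-\frac{n-2}{2}}.
\]
Raising this to the power $1+\gamma$ and using $(a+b)^{1+\gamma}\le C(a^{1+\gamma}+b^{1+\gamma})$ then gives the claim. Indeed, \eqref{eq:Gu} shows that $\mathfrak G(u)$ equals $\left(\int_\Omega|\mathcal R_u-r_\infty|^{\frac{2n}{n+2}}u^{\frac{2n}{n-2}}\,\ud x\right)^{\frac{n+2}{2n}}$.

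To prove the displayed bound, set $v:=u-\xi_{\bm z}$ and write
\[
\Delta\xi_{\bm z}+r_\infty\xi_{\bm z}^{\frac{n+2}{n-2}}
=\left(\Delta u+r_\infty u^{\frac{n+2}{n-2}}\right)-\Delta v-r_\infty\left(u^{\frac{n+2}{n-2}}-\xi_{\bm z}^{\frac{n+2}{n-2}}\right).
\]
We test each of the three pieces against $\phi_l$.

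The first piece contributes at most $\mathfrak G(u)\|\phi_l\|_{L^{\frac{2n}{n-2}}}\le C\mathfrak G(u)$ by H\"older's inequality.

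For the second piece, we integrate by parts twice; this is legitimate since $v$ and $\phi_l$ both vanish on $\partial\Omega$. Then \eqref{eq:phil} gives
\[
-\int_\Omega\Delta v\,\phi_l\,\ud x=\mu_l\int_\Omega u_\infty^{\frac4{n-2}}\phi_l v\,\ud x,
\]
and since $u_\infty^{\frac4{n-2}}\phi_l\in L^\infty$, this is bounded by $C\|v\|_{L^1}$.

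For the third piece, we use the pointwise bound
\[
\left|u^{\frac{n+2}{n-2}}-\xi_{\bm z}^{\frac{n+2}{n-2}}\right|\le C\,\xi_{\bm z}^{\frac4{n-2}}|v|+C|v|^{\frac{n+2}{n-2}}.
\]
Since $\xi_{\bm z}\le 2u_\infty$ is bounded and $\phi_l$ is bounded, the third piece is at most $C\|v\|_{L^1}+C\|v\|_{L^{\frac{n+2}{n-2}}}^{\frac{n+2}{n-2}}$.

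Lemma \ref{lem:n+2norm} bounds both of these norms by $C\mathfrak G(u)+C\sum_k\lambda_k^{-\frac{n-2}{2}}$. Here we use that $\left\|(\mathcal R_u-r_\infty)u^{\frac{n+2}{n-2}}\right\|_{L^{\frac{2n}{n+2}}}=\mathfrak G(u)$, and that $\mathfrak G(u)^{\frac{n+2}{n-2}}\le C\mathfrak G(u)$ because $\mathfrak G(u)\le C\va$ is small. This yields the displayed bound, and hence the proposition.

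The only delicate point is the nonlinear difference in the third piece. We use the $L^{\frac{n+2}{n-2}}$ estimate of Lemma \ref{lem:n+2norm} precisely to absorb the superlinear part $|v|^{\frac{n+2}{n-2}}$, which is large on the bubble region. The bubbles contribute there only through $\int_\Omega U_k^{\frac{n+2}{n-2}}\,\ud x\sim\lambda_k^{-\frac{n-2}{2}}$, which is consistent with the error term in the statement. Everything else is linear and follows routinely from boundedness of $\phi_l$ and $u_\infty$ on $\overline\Omega$.
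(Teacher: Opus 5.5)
Your proposal is correct and follows the paper's proof essentially line by line. The paper uses the same three-term splitting of $\int_\Omega(\Delta\xi_{\bm z}+r_\infty\xi_{\bm z}^{\frac{n+2}{n-2}})\phi_l\,\ud x$ via integration by parts and \eqref{eq:phil}, the same pointwise bound on $u^{\frac{n+2}{n-2}}-\xi_{\bm z}^{\frac{n+2}{n-2}}$, Lemma \ref{lem:n+2norm} together with the smallness of $\mathfrak G(u)$, and finally Lemma \ref{lem:L-ineq}.

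One minor correction: the $\bm z$ here comes from the minimizer of \eqref{eq:vp}, so the selection only guarantees $|\bm z|<\va_0$, not $|\bm z|<\va$. Applying Lemma \ref{lem:L-ineq} therefore requires $\va_0\le\delta_2$, which is arranged by shrinking $\va_0$, rather than by taking $\va_1$ small.
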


\begin{proof}
For every $1\leq l\leq L$, integration by parts and
\eqref{eq:phil} give
\[
\begin{aligned}
&
\int_\Omega
\left(
\Delta\xi_{\bm z}
+
r_\infty
\xi_{\bm z}^{\frac{n+2}{n-2}}
\right)\phi_l\,\ud x\\
={}&
\int_\Omega
\left(
\Delta u+r_\infty u^{\frac{n+2}{n-2}}
\right)\phi_l\,\ud x+
\mu_l
\int_\Omega
u_\infty^{\frac4{n-2}}
\phi_l(u-\xi_{\bm z})\,\ud x-
r_\infty
\int_\Omega
\phi_l
\left(
u^{\frac{n+2}{n-2}}
-
\xi_{\bm z}^{\frac{n+2}{n-2}}
\right)\,\ud x\\
={}&
\int_\Omega
(r_\infty-\mathcal R_u)
u^{\frac{n+2}{n-2}}\phi_l\,\ud x+
\mu_l
\int_\Omega
u_\infty^{\frac4{n-2}}
\phi_l(u-\xi_{\bm z})\,\ud x-
r_\infty
\int_\Omega
\phi_l
\left(
u^{\frac{n+2}{n-2}}
-
\xi_{\bm z}^{\frac{n+2}{n-2}}
\right)\,\ud x.
\end{aligned}
\]
Since $u,\xi_{\bm z}\geq0$, we have
\[
\begin{aligned}
\left|
u^{\frac{n+2}{n-2}}
-
\xi_{\bm z}^{\frac{n+2}{n-2}}
\right|
\leq{}&
C\xi_{\bm z}^{\frac4{n-2}}
|u-\xi_{\bm z}|+
C|u-\xi_{\bm z}|^{\frac{n+2}{n-2}}.
\end{aligned}
\]
The functions $\phi_l$, $u_\infty^{\frac4{n-2}}\phi_l$, and
$\xi_{\bm z}^{\frac4{n-2}}\phi_l$ are uniformly bounded for
$|\bm z|$ sufficiently small. Therefore,
\be\label{eq:xi_zphi_l}
\begin{aligned}
&
\sup_{1\leq l\leq L}
\left|
\int_\Omega
\left(
\Delta\xi_{\bm z}
+
r_\infty
\xi_{\bm z}^{\frac{n+2}{n-2}}
\right)\phi_l\,\ud x
\right|\\
&\quad\leq
C
\left\|
(r_\infty-\mathcal R_u)
u^{\frac{n+2}{n-2}}
\right\|_{L^{\frac{2n}{n+2}}(\Omega)}+
C\|u-\xi_{\bm z}\|_{L^1(\Omega)}
+
C
\|u-\xi_{\bm z}\|_{L^{\frac{n+2}{n-2}}(\Omega)}
 ^{\frac{n+2}{n-2}}\\
&\quad\leq
C
\left\|
(r_\infty-\mathcal R_u)
u^{\frac{n+2}{n-2}}
\right\|_{L^{\frac{2n}{n+2}}(\Omega)}+
C
\left\|
(\mathcal R_u-r_\infty)u^{\frac{n+2}{n-2}}
\right\|_{L^{\frac{2n}{n+2}}(\Omega)}
 ^{\frac{n+2}{n-2}}+
C\sum_{k=1}^{\ell}
\lambda_k^{-\frac{n-2}{2}}\\
&\quad\leq
C
\left\|
(r_\infty-\mathcal R_u)
u^{\frac{n+2}{n-2}}
\right\|_{L^{\frac{2n}{n+2}}(\Omega)}+
C\sum_{k=1}^{\ell}
\lambda_k^{-\frac{n-2}{2}},
\end{aligned}
\ee
where we used Lemma \ref{lem:n+2norm} in the second inequality, and 
$\|\mathcal R_u-r_\infty\|_{C^0(\Omega)}<\va$
and
$
\int_\Omega u^{\frac{2n}{n-2}}\,\ud x=1
$
in the last inequality. 

The conclusion now follows from Lemma \ref{lem:L-ineq}.
\end{proof}

\begin{lem}\label{lem:regular-energy-balance}
Suppose that $u_\infty>0$. Then
\[
\begin{aligned}
&\left|
\int_\Omega|\nabla\xi_{\bm z}|^2\,\ud x
-r_\infty
\int_\Omega\xi_{\bm z}^{\frac{2n}{n-2}}\,\ud x
\right|\leq
C\left(
\int_\Omega
|\mathcal R_u-r_\infty|^{\frac{2n}{n+2}}
 u^{\frac{2n}{n-2}}\,\ud x
\right)^{\frac{n+2}{2n}}
+C\sum_{k=1}^{\ell}\lambda_k^{-\frac{n-2}{2}}.
\end{aligned}
\]
\end{lem}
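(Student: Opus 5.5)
The plan is to split the quantity through the decomposition \eqref{eq:xiz}, $\xi_{\bm z}=u_\infty+\sum_l z_l\phi_l+h_{\bm z}$, and to read it as the $\xi_{\bm z}$-component of the residual. First integrate by parts:
\[
\int_\Omega|\nabla\xi_{\bm z}|^2\,\ud x-r_\infty\int_\Omega\xi_{\bm z}^{\frac{2n}{n-2}}\,\ud x
=-\int_\Omega\Big(\Delta\xi_{\bm z}+r_\infty\xi_{\bm z}^{\frac{n+2}{n-2}}\Big)\xi_{\bm z}\,\ud x .
\]
By \eqref{eq:projection0}, the function $g:=\Delta\xi_{\bm z}+r_\infty\xi_{\bm z}^{\frac{n+2}{n-2}}$ satisfies $\Pi g=0$. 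Hence
\[
g=\sum_{i=1}^L\Big(\int_\Omega g\phi_i\,\ud x\Big)u_\infty^{\frac4{n-2}}\phi_i .
\]
Pairing with $\xi_{\bm z}$ then gives
\[
\int_\Omega g\,\xi_{\bm z}\,\ud x=\sum_{i=1}^L\Big(\int_\Omega g\phi_i\,\ud x\Big)\int_\Omega u_\infty^{\frac4{n-2}}\phi_i\xi_{\bm z}\,\ud x .
\]

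The second factor is uniformly bounded for $|\bm z|$ small, since $\xi_{\bm z}\le 2u_\infty$ and $\phi_i$ is bounded. So the left side of the lemma is bounded by $C\sup_{1\le l\le L}\big|\int_\Omega g\phi_l\,\ud x\big|$. This is exactly the quantity already estimated in \eqref{eq:xi_zphi_l} in the proof of Proposition~\ref{prop:regular-part}. That estimate gives the bound
\[
C\big\|(\mathcal R_u-r_\infty)u^{\frac{n+2}{n-2}}\big\|_{L^{\frac{2n}{n+2}}(\Omega)}+C\sum_k\lambda_k^{-\frac{n-2}{2}} .
\]
By \eqref{eq:Gu}, the first term equals the $\frac{n+2}{2n}$ power of the integral appearing in the statement, which is what the lemma claims.

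The main point to check is the use of \eqref{eq:xi_zphi_l}. Its derivation relies on Lemma~\ref{lem:n+2norm} to control $\|u-\xi_{\bm z}\|_{L^1}$ and $\|u-\xi_{\bm z}\|_{L^{\frac{n+2}{n-2}}}^{\frac{n+2}{n-2}}$. It also absorbs the power $\frac{n+2}{n-2}$ of the residual into its first power, using $\|\mathcal R_u-r_\infty\|_{C^0}<\va$ and the normalization \eqref{eq:normalizedmass}. Both hold under the present hypotheses (the standing assumptions of Proposition~\ref{prop:regular-part}), so the argument carries over verbatim. No Łojasiewicz exponent is needed here: the regular energy balance is linear in the projected residual, unlike the energy difference $Y_\Omega(\xi_{\bm z})-Y_\Omega(u_\infty)$, which required Lemma~\ref{lem:L-ineq}.
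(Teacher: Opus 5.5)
Your proposal is correct and follows essentially the same route as the paper's proof. You integrate by parts, expand $\Delta\xi_{\bm z}+r_\infty\xi_{\bm z}^{\frac{n+2}{n-2}}$ in the directions $u_\infty^{\frac4{n-2}}\phi_l$ using \eqref{eq:projection0}, pair with $\xi_{\bm z}$, use the uniform bound on $\int_\Omega u_\infty^{\frac4{n-2}}\phi_l\xi_{\bm z}\,\ud x$, and conclude with \eqref{eq:xi_zphi_l}.
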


\begin{proof}
By \eqref{eq:Pif} and \eqref{eq:projection0},
\[
\Delta\xi_{\bm z}
+r_\infty\xi_{\bm z}^{\frac{n+2}{n-2}}
=
\sum_{l=1}^{L}
\left(
\int_\Omega
\left(
\Delta\xi_{\bm z}
+r_\infty\xi_{\bm z}^{\frac{n+2}{n-2}}
\right)\phi_l\,\ud x
\right)
 u_\infty^{\frac4{n-2}}\phi_l.
\]
Multiplying by $\xi_{\bm z}$, integrating, and using
\eqref{eq:xi_zphi_l}, together with the uniform boundedness of
\[
\int_\Omega u_\infty^{\frac4{n-2}}\phi_l\xi_{\bm z}\,\ud x,
\qquad 1\leq l\leq L,
\]
gives the conclusion.
\end{proof}

\begin{lem}\label{lem:regular-surface-expansions}
As $|\bm z|\to0$, one has
\be\label{eq:nablaxiz}
\begin{aligned}
\int_\Omega|\nabla\xi_{\bm z}|^2\,\ud x
={}&
\int_\Omega|\nabla u_\infty|^2\,\ud x
+2r_\infty z_1
\left(
\int_\Omega u_\infty^{\frac{2n}{n-2}}\,\ud x
\right)^{1/2}+
\sum_{i=1}^{L}\mu_i z_i^2
+o(|\bm z|^2),
\end{aligned}
\ee
and
\be\label{eq:xiz^p}
\begin{aligned}
\int_\Omega\xi_{\bm z}^{\frac{2n}{n-2}}\,\ud x
={}&
\int_\Omega u_\infty^{\frac{2n}{n-2}}\,\ud x
+\frac{2n}{n-2}z_1
\left(
\int_\Omega u_\infty^{\frac{2n}{n-2}}\,\ud x
\right)^{1/2}+
\frac{n(n+2)}{(n-2)^2}
\sum_{i=1}^{L}z_i^2
+o(|\bm z|^2).
\end{aligned}
\ee
\end{lem}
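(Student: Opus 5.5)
We plan to Taylor expand both functionals along the decomposition \eqref{eq:xiz}, $\xi_{\bm z}=u_\infty+\sum_{l=1}^L z_l\phi_l+h_{\bm z}$, with $\|h_{\bm z}\|=O(|\bm z|^2)$. Here $h_{\bm z}$ is $H_0^1$-orthogonal to $\operatorname{span}\{\phi_1,\dots,\phi_L\}$. The structural facts we will use are: $-\Delta u_\infty=r_\infty u_\infty^{\frac{n+2}{n-2}}$; $\phi_1=u_\infty\big(\int_\Omega u_\infty^{\frac{2n}{n-2}}\big)^{-1/2}$; $\mu_1=r_\infty$; the $H_0^1$-orthogonality $\langle\phi_i,\phi_j\rangle=\mu_i\delta_{ij}$; and the weighted orthonormality \eqref{eq:phiij}.

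\emph{Dirichlet integral.} Write $v_{\bm z}=\sum_l z_l\phi_l$. Then
\[
\int_\Omega|\nabla\xi_{\bm z}|^2
=\|u_\infty\|^2+2\langle u_\infty,v_{\bm z}\rangle+\|v_{\bm z}\|^2+2\langle u_\infty+v_{\bm z},h_{\bm z}\rangle+\|h_{\bm z}\|^2 .
\]
Since $u_\infty$ is a multiple of $\phi_1$, both $u_\infty$ and $v_{\bm z}$ lie in $\operatorname{span}\{\phi_1,\dots,\phi_L\}$, so the cross term with $h_{\bm z}$ vanishes exactly. Also $\|h_{\bm z}\|^2=O(|\bm z|^4)$ and $\|v_{\bm z}\|^2=\sum\mu_i z_i^2$. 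For the remaining term,
\[
\langle u_\infty,\phi_l\rangle=\mu_l\int_\Omega u_\infty^{\frac4{n-2}}\phi_l u_\infty
=\mu_l\Big(\int_\Omega u_\infty^{\frac{2n}{n-2}}\Big)^{1/2}\delta_{l1}.
\]
Thus $2\langle u_\infty,v_{\bm z}\rangle=2r_\infty z_1\big(\int u_\infty^{\frac{2n}{n-2}}\big)^{1/2}$, which gives \eqref{eq:nablaxiz}. This part is exact up to $O(|\bm z|^4)$.

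\emph{Critical-power integral.} Set $p=\frac{2n}{n-2}$ and expand
\[
(u_\infty+v_{\bm z}+h_{\bm z})^{p}=u_\infty^p+p\,u_\infty^{p-1}(v_{\bm z}+h_{\bm z})+\tfrac{p(p-1)}{2}u_\infty^{p-2}(v_{\bm z}+h_{\bm z})^2+R .
\]
Since $\xi_{\bm z}/u_\infty\in[1/2,2]$ by Lemma \ref{lem:ift}, the remainder satisfies $|R|\le C u_\infty^{p-3}|v_{\bm z}+h_{\bm z}|^3$ when $p\ge3$. When $p<3$ we use instead $|R|\le C u_\infty^{p-2}|v_{\bm z}+h_{\bm z}|^2\,\omega$, where $\omega$ is small because $|v_{\bm z}+h_{\bm z}|/u_\infty$ is uniformly $o(1)$. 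This last fact holds because $\phi_l/u_\infty$ is bounded (Hopf lemma, $\phi_l\in C^1$ vanishing on $\partial\Omega$), and $h_{\bm z}/u_\infty\to0$ uniformly by the analyticity of $\bm z\mapsto\xi_{\bm z}$ into $W^{2,p}\subset C^1$. Either way $\int R=o(|\bm z|^2)$.

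For the linear term, $p\,u_\infty^{p-1}=p\,u_\infty^{\frac4{n-2}}u_\infty$, so
\[
p\int u_\infty^{\frac4{n-2}}u_\infty\phi_l=p\Big(\int u_\infty^{p}\Big)^{1/2}\delta_{l1}.
\]
This gives the $z_1$ term. The $h_{\bm z}$ contribution is $p\int u_\infty^{\frac{n+2}{n-2}}h_{\bm z}=\frac{p}{r_\infty}\langle u_\infty,h_{\bm z}\rangle=0$, again exactly by orthogonality. For the quadratic term, $u_\infty^{p-2}=u_\infty^{\frac4{n-2}}$, so \eqref{eq:phiij} gives $\int u_\infty^{\frac4{n-2}}v_{\bm z}^2=\sum z_i^2$. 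Moreover $\frac{p(p-1)}2=\frac{n(n+2)}{(n-2)^2}$. The terms involving $h_{\bm z}$ in the quadratic part are $O(|\bm z|^3)$ by H\"older and Sobolev. This yields \eqref{eq:xiz^p}.

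The main point requiring care is the remainder control in the power expansion when $p<3$, i.e.\ $n\ge7$. There we must avoid negative powers of $u_\infty$ near $\partial\Omega$. We will handle this through the uniform bound $|v_{\bm z}+h_{\bm z}|\le o(1)\,u_\infty$ described above. Alternatively, since the map $\bm z\mapsto\int\xi_{\bm z}^{p}$ is $C^2$ (indeed analytic, via Feireisl--Simondon on $\mathcal D$), we could compute its second derivative at $0$ directly using $\partial_{z_l}\xi_{\bm z}|_0=\phi_l$ and $\partial^2_{z}\xi_{\bm z}|_0=\partial^2 h_{\bm z}|_0\perp\operatorname{span}\{\phi_l\}$. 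That route gives the same coefficients with an $o(|\bm z|^2)$ Taylor remainder.
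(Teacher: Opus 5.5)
Your proposal is correct and follows the same route as the paper: you expand along $\xi_{\bm z}=u_\infty+\sum_l z_l\phi_l+h_{\bm z}$, the cross terms vanish by the $H_0^1$-orthogonality of $h_{\bm z}$ to $\operatorname{span}\{\phi_1,\dots,\phi_L\}\ni u_\infty$ together with \eqref{eq:phiij} and $\mu_1=r_\infty$, and the critical power is Taylor expanded using $\xi_{\bm z}/u_\infty\in[1/2,2]$. Your argument that the remainder is $o(|\bm z|^2)$ for $n\ge7$ spells out a step the paper leaves implicit; it uses $|\xi_{\bm z}-u_\infty|\le C|\bm z|\,u_\infty$, which follows from $C^1$ control in $W^{2,p}$ with the paper's $p>n$ (not the exponent $\frac{2n}{n-2}$ you also named $p$) and the Hopf bound $u_\infty\ge c\,d$.
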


\begin{proof}
By \eqref{eq:xiz},
\[
\xi_{\bm z}
=u_\infty+
\sum_{i=1}^{L}z_i\phi_i+h_{\bm z},
\qquad
\|h_{\bm z}\|=O(|\bm z|^2).
\]
The orthogonality of $h_{\bm z}$, together with
\eqref{eq:phiij}, gives
\[
\int_\Omega
\left|
\nabla\left(\sum_{i=1}^{L}z_i\phi_i\right)
\right|^2\,\ud x
=
\sum_{i=1}^{L}\mu_i z_i^2
\]
and
\[
\int_\Omega
\nabla u_\infty\cdot
\nabla\left(\sum_{i=1}^{L}z_i\phi_i\right)\,\ud x
=
r_\infty z_1
\left(
\int_\Omega u_\infty^{\frac{2n}{n-2}}\,\ud x
\right)^{1/2}.
\]
This proves \eqref{eq:nablaxiz}. The expansion
\eqref{eq:xiz^p} follows by Taylor expansion, using
$\xi_{\bm z}/u_\infty$ uniformly bounded above and below,
\eqref{eq:phiij}, and for $i=2, \cdots,L$ that
\[
\int_\Omega u_\infty^{\frac{n+2}{n-2}}\phi_i\,\ud x=\int_\Omega u_\infty^{\frac{n+2}{n-2}}h_{\bm z}\,\ud x=0.
\]
\end{proof}

\begin{prop}\label{pro:Yom(u)-1}
Suppose that $u_\infty>0$ in $\Omega$ and that $\sigma$ is given by
\eqref{eq:decomposition-1}. There exist $\va_0\in(0,1)$ and $c>0$,
depending only on $\Omega$, $n$, $\ell$, $\delta_0$, and
$u_\infty$, such that, if
\[
d(a_i)\geq\delta_0,
\qquad 1\leq i\leq\ell,
\]
and
\[
|\bm z|
+
\sum_{i=1}^{\ell}|\alpha_i-1|
+
\sum_{i=1}^{\ell}\lambda_i^{-1}
+
\sum_{i\neq j}\Lambda_{i,j}
\leq\va_0,
\]
then
\[
\begin{aligned}
Y_\Omega(\sigma)
&+
c\sum_{k=1}^{\ell}
\lambda_k^{-\frac{n-2}{2}}
+
c\sum_{i<j}\Lambda_{i,j}\leq
\left(
Y_\Omega(\xi_{\bm z})^{\frac n2}
+
\sum_{k=1}^{\ell}
Y_\Omega(PU_k)^{\frac n2}
\right)^{\frac2n}.
\end{aligned}
\]
\end{prop}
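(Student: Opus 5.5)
The plan is to expand numerator and denominator of $Y_\Omega(\sigma)$, isolating the regular–bubble cross terms, and show they produce a strictly negative contribution of order $\sum_k\lambda_k^{-\frac{n-2}{2}}$. The bubble–bubble cross terms will similarly produce $-c\sum_{i<j}\Lambda_{i,j}$. What remains is then compared with the right-hand side by a Hölder-type inequality.

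\textbf{Notation.} Write $B:=r_\infty^{-\frac{n-2}{4}}\sum_k\alpha_kPU_k$, $p:=\frac{n+2}{n-2}$, and
\[
A:=\int_\Omega|\nabla\xi_{\bm z}|^2,\quad V:=\int_\Omega\xi_{\bm z}^{\frac{2n}{n-2}},\quad G_k:=\alpha_k^2r_\infty^{-\frac{n-2}{2}}\int_\Omega|\nabla PU_k|^2,\quad M_k:=\Big(\alpha_kr_\infty^{-\frac{n-2}{4}}\Big)^{\frac{2n}{n-2}}\int_\Omega PU_k^{\frac{2n}{n-2}}.
\]

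\textbf{Numerator.} Since $-\Delta PU_k=U_k^p$,
\[
\int_\Omega|\nabla\sigma|^2=A+\sum_kG_k+2I+J,\qquad I:=\int_\Omega\xi_{\bm z}(-\Delta B),\quad J:=\text{bubble–bubble gradient cross terms}.
\]
As in \eqref{eq:PUiUj^p}, $J=r_\infty^{-\frac{n-2}{2}}\sum_{i\ne j}\alpha_i\alpha_j\big(C_1c_0\Lambda_{i,j}+O(\lambda_i^{2-n}+\lambda_j^{2-n})\big)+o(\sum\Lambda)$. For $I$, Taylor expand $\xi_{\bm z}$ at $a_k$, which is smooth and bounded there, to get
\[
I=r_\infty^{-\frac{n-2}{4}}\sum_k\alpha_kC_1\lambda_k^{-\frac{n-2}{2}}\xi_{\bm z}(a_k)+O\Big(\sum_k\lambda_k^{-\frac{n+2}{2}}\Big).
\]

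\textbf{Denominator.} Use the partition argument from the proof of Proposition~\ref{prop:quantif-1}, with the extra region where $\xi_{\bm z}$ dominates. This gives
\[
\int_\Omega(\xi_{\bm z}+B)^{\frac{2n}{n-2}}= V+\sum_kM_k+X+Z+o\Big(\sum\Lambda+\sum_k\lambda_k^{-\frac{n-2}{2}}\Big),
\]
where $X:=\frac{2n}{n-2}\big(\int_\Omega B^p\xi_{\bm z}+\int_\Omega\xi_{\bm z}^pB\big)$ and $Z$ is the bubble–bubble mass cross term of \eqref{eq:sum-bubbles-power}. The term $\int_\Omega B^p\xi_{\bm z}$ equals $I\,r_\infty^{-1}(1+o(1))$, because $\alpha_k\to1$ and $PU_k^p$ differs from $U_k^p$ only by a lower-order term. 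The term $\int_\Omega\xi_{\bm z}^pB\ge c\sum_k\lambda_k^{-\frac{n-2}{2}}$, since $\xi_{\bm z}\ge u_\infty/2$ and $\int_{B_{\delta_0/2}(a_k)}PU_k\sim\lambda_k^{-\frac{n-2}{2}}$. The cross remainders in the Taylor expansion, such as $\xi^{\frac{4}{n-2}}B^2$ on the $\xi$-dominant region and $B^{\frac{4}{n-2}}\xi^2$ near the centers, are $o(\lambda_k^{-\frac{n-2}{2}})$ by direct radial integration. I expect this remainder bookkeeping, especially for $n\ge6$ where $\frac4{n-2}<1$, to be the main technical obstacle. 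I would handle it with the $\varepsilon$-splitting of \eqref{eq:two-bubble-epsilon} adapted to the pair $(\xi_{\bm z},U_k)$.

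\textbf{Combining.} Set $N_0:=A+\sum_kG_k$ and $D_0:=V+\sum_kM_k$. Then
\[
Y_\Omega(\sigma)=\frac{N_0}{D_0^{\frac{n-2}{n}}}+\frac{1}{D_0^{\frac{n-2}{n}}}\Big[2I+J-\frac{n-2}{n}\frac{N_0}{D_0}(X+Z)\Big]+\text{higher order}.
\]
By Lemma~\ref{lem:regular-surface-expansions} and Proposition~\ref{prop:quantif-1}-type expansions, $N_0/D_0=r_\infty+o(1)$ as $\va_0\to0$. Note that no $\mathfrak G$-dependent energy balance is needed here, only $o(1)$ closeness. The bracket therefore becomes
\[
2I-2r_\infty\Big(\int_\Omega B^p\xi_{\bm z}+\int_\Omega\xi_{\bm z}^pB\Big)+\big(J-r_\infty\tfrac{n-2}{n}Z\big)+o(\cdots).
\]
The $I$ terms cancel to leading order, leaving $-2r_\infty\int_\Omega\xi_{\bm z}^pB\le-c\sum_k\lambda_k^{-\frac{n-2}{2}}$. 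Exactly as in the final computation of Proposition~\ref{prop:quantif-1}, $J-r_\infty\frac{n-2}{n}Z=-c\sum_{i\ne j}\Lambda_{i,j}+O(\sum_k\lambda_k^{2-n})+o(\sum\Lambda)$. The $\lambda_k^{2-n}$ errors are absorbed by the $\lambda_k^{-\frac{n-2}{2}}$ gain.

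\textbf{Final comparison.} It remains to show
\[
\frac{N_0}{D_0^{\frac{n-2}{n}}}\le\Big(Y_\Omega(\xi_{\bm z})^{\frac n2}+\sum_kY_\Omega(PU_k)^{\frac n2}\Big)^{\frac2n}.
\]
This follows from the elementary inequality
\[
\frac{\sum a_i}{(\sum b_i)^{\frac{n-2}{n}}}\le\Big(\sum\big(a_i b_i^{-\frac{n-2}{n}}\big)^{\frac n2}\Big)^{\frac2n},
\]
which is Hölder's inequality with exponents $\frac n2$ and $\frac n{n-2}$ applied to $a_i=(a_ib_i^{-\frac{n-2}{n}})\,b_i^{\frac{n-2}{n}}$. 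Here $G_k/M_k^{\frac{n-2}{n}}=Y_\Omega(PU_k)$ by scale invariance, so the $\alpha_k$ disappear and no $|\alpha_k-1|^2$ error arises.
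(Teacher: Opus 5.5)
Your argument is correct, but it is organized differently from the paper's proof.

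\textbf{How the paper argues.} The paper never expands $\int_\Omega\sigma^{\frac{2n}{n-2}}\,\ud x$ to first order. It starts from $\int_\Omega|\nabla\sigma|^2\,\ud x=Y_\Omega(\sigma)\big(\int_\Omega\sigma^{\frac{2n}{n-2}}\,\ud x\big)^{\frac{n-2}{n}}$. It sets $F(\xi)=\int_\Omega|\nabla\xi|^2\,\ud x\big/\int_\Omega|\xi|^{\frac{2n}{n-2}}\,\ud x$ and $f=F(\xi_{\bm z})^{\frac n2}\xi_{\bm z}^{\frac{2n}{n-2}}+\sum_kF(PU_k)^{\frac n2}(PU_k)^{\frac{2n}{n-2}}$, so that $\int_\Omega f\,\ud x=Y_\Omega(\xi_{\bm z})^{\frac n2}+\sum_kY_\Omega(PU_k)^{\frac n2}$. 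It then applies the pointwise H\"older inequality $\int_\Omega f^{\frac2n}\sigma^2\,\ud x\le\big(\int_\Omega f\,\ud x\big)^{\frac2n}\big(\int_\Omega\sigma^{\frac{2n}{n-2}}\,\ud x\big)^{\frac{n-2}{n}}$, which is the continuous counterpart of your discrete H\"older step.

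After expanding $\sigma^2$, each cross term on the right dominates the matching gradient cross term $-\int_\Omega PU_k\Delta\xi_{\bm z}\,\ud x$ or $-\int_\Omega PU_i\Delta PU_j\,\ud x$ on the left. The errors are controlled by $\|\Delta\xi_{\bm z}+F(\xi_{\bm z})\xi_{\bm z}^{\frac{n+2}{n-2}}\|_{L^p(\Omega)}\le C|\bm z|$ and by the projection estimates of Section~2. The strict gains $c\lambda_k^{-\frac{n-2}{2}}$ and $c\Lambda_{i,j}$ come from $(A+B)^{\frac2n}\ge B^{\frac2n}+c_\theta A^{\frac2n}$ for $0\le B\le\theta A$. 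This is used only on the small balls $B_{\lambda_k^{-1}}(a_k)$ and $B_{\lambda_i^{-1}}(a_i)$ with $\lambda_i\ge\lambda_j$.

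\textbf{What each route buys.} The paper needs only crude lower bounds on tiny balls and $o(1)$ information about the mass. It thereby bypasses the region-by-region Taylor bookkeeping for $(\xi_{\bm z}+B)^{\frac{2n}{n-2}}$ that you rightly flag as the main work in your approach.

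Your route instead gives a genuine first-order expansion with explicit deficits. These are $2\int_\Omega\nabla\xi_{\bm z}\cdot\nabla B\,\ud x\approx2r_\infty\int_\Omega\xi_{\bm z}^{\frac{n+2}{n-2}}B\,\ud x$ and $C_1c_0r_\infty^{-\frac{n-2}{2}}\sum_{i\ne j}\Lambda_{i,j}$, each times the bounded factor $D_0^{-\frac{n-2}{n}}$. The scale invariance of your discrete H\"older step removes the $\alpha_k$ with no $|\alpha_k-1|^2$ loss.

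That bookkeeping does close. The set $\{B\ge\xi_{\bm z}\}$ lies in $\bigcup_kB_{C\lambda_k^{-1/2}}(a_k)$, since near $\partial\Omega$ one has $PU_k\le C\lambda_k^{-\frac{n-2}{2}}d(x)$ while $\xi_{\bm z}\ge c\,d(x)$. The regular--bubble remainders are then $o(\lambda_k^{-\frac{n-2}{2}})$ by radial integration.

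\textbf{Two minor points.} First, your justification of $\int_\Omega B^{\frac{n+2}{n-2}}\xi_{\bm z}\,\ud x=r_\infty^{-1}I(1+o(1))$ leaves out the multi-bubble correction $\int_\Omega\xi_{\bm z}\big[B^{\frac{n+2}{n-2}}-\sum_k(\alpha_kr_\infty^{-\frac{n-2}{4}}PU_k)^{\frac{n+2}{n-2}}\big]\,\ud x$. This term is $o(\sum_k\lambda_k^{-\frac{n-2}{2}})$, but it needs an argument. You can avoid it. Write $I=\int_\Omega(-\Delta\xi_{\bm z})B\,\ud x=F(\xi_{\bm z})\int_\Omega\xi_{\bm z}^{\frac{n+2}{n-2}}B\,\ud x+O(|\bm z|\sum_k\lambda_k^{-\frac{n-2}{2}})$, and cancel $2I$ against the $\xi_{\bm z}^{\frac{n+2}{n-2}}B$ term instead. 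The remaining deficit $2r_\infty\int_\Omega B^{\frac{n+2}{n-2}}\xi_{\bm z}\,\ud x$ then needs only a lower bound, which follows from superadditivity of $t\mapsto t^{\frac{n+2}{n-2}}$.

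Second, $\xi_{\bm z}$ is not harmonic, so the error in your Taylor expansion of $I$ is $O(\lambda_k^{-\frac{n+2}{2}}\ln\lambda_k)$ rather than $O(\lambda_k^{-\frac{n+2}{2}})$. This is harmless, since you only use $I=O(\sum_k\lambda_k^{-\frac{n-2}{2}})$.
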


\begin{proof}
For convenience, define
\[
F(\xi)
:=
\frac{\displaystyle\int_\Omega|\nabla\xi|^2\,\ud x}
{\displaystyle\int_\Omega|\xi|^{\frac{2n}{n-2}}\,\ud x},
\qquad
\xi\in H_0^1(\Omega)\setminus\{0\}.
\]
Then
\[
Y_\Omega(\xi)^{\frac n2}
=
\int_\Omega
F(\xi)^{\frac n2}
|\xi|^{\frac{2n}{n-2}}\,\ud x.
\]

By \eqref{eq:decomposition-1},
\[
\begin{aligned}
\int_\Omega|\nabla\sigma|^2\,\ud x
={}&
\int_\Omega|\nabla\xi_{\bm z}|^2\,\ud x+
2r_\infty^{-\frac{n-2}{4}}
\sum_{k=1}^{\ell}\alpha_k
\int_\Omega
\nabla\xi_{\bm z}\cdot\nabla PU_k\,\ud x\\
&+
r_\infty^{-\frac{n-2}{2}}
\sum_{k=1}^{\ell}\alpha_k^2
\int_\Omega|\nabla PU_k|^2\,\ud x+
2r_\infty^{-\frac{n-2}{2}}
\sum_{i<j}\alpha_i\alpha_j
\int_\Omega\nabla PU_i\cdot\nabla PU_j\,\ud x.
\end{aligned}
\]
Therefore,
\be\label{eq:2.11-1}
\begin{aligned}
&
Y_\Omega(\sigma)
\left(
\int_\Omega
\sigma^{\frac{2n}{n-2}}\,\ud x
\right)^{\frac{n-2}{n}}\\
&=
\int_\Omega
F(\xi_{\bm z})
\xi_{\bm z}^{\frac{2n}{n-2}}\,\ud x+
r_\infty^{-\frac{n-2}{2}}
\sum_{k=1}^{\ell}\alpha_k^2
\int_\Omega
F(PU_k)(PU_k)^{\frac{2n}{n-2}}\,\ud x\\
&\quad-
2r_\infty^{-\frac{n-2}{4}}
\sum_{k=1}^{\ell}\alpha_k
\int_\Omega
PU_k\Delta\xi_{\bm z}\,\ud x-
2r_\infty^{-\frac{n-2}{2}}
\sum_{i<j}\alpha_i\alpha_j
\int_\Omega
PU_i\Delta PU_j\,\ud x.
\end{aligned}
\ee

On the other hand, H\"older's inequality gives
\be\label{eq:2.11-3}
\begin{aligned}
&
\left(
Y_\Omega(\xi_{\bm z})^{\frac n2}
+
\sum_{k=1}^{\ell}
Y_\Omega(PU_k)^{\frac n2}
\right)^{\frac2n}
\left(
\int_\Omega
\sigma^{\frac{2n}{n-2}}\,\ud x
\right)^{\frac{n-2}{n}}\\
&=
\left(
\int_\Omega
\left[
F(\xi_{\bm z})^{\frac n2}
\xi_{\bm z}^{\frac{2n}{n-2}}
+
\sum_{k=1}^{\ell}
F(PU_k)^{\frac n2}
(PU_k)^{\frac{2n}{n-2}}
\right]\,\ud x
\right)^{\frac2n}
\left(
\int_\Omega
\sigma^{\frac{2n}{n-2}}\,\ud x
\right)^{\frac{n-2}{n}}\\
&\quad\geq
\int_\Omega
\left[
F(\xi_{\bm z})^{\frac n2}
\xi_{\bm z}^{\frac{2n}{n-2}}
+
\sum_{k=1}^{\ell}
F(PU_k)^{\frac n2}
(PU_k)^{\frac{2n}{n-2}}
\right]^{\frac2n}
\sigma^2\,\ud x.
\end{aligned}
\ee

Expanding $\sigma^2$ and retaining the corresponding nonnegative
terms, we obtain
\[
\begin{aligned}
&\left(
Y_\Omega(\xi_{\bm z})^{\frac n2}
+
\sum_{k=1}^{\ell}
Y_\Omega(PU_k)^{\frac n2}
\right)^{\frac2n}
\left(
\int_\Omega
\sigma^{\frac{2n}{n-2}}\,\ud x
\right)^{\frac{n-2}{n}}\\
&\geq
\int_\Omega
F(\xi_{\bm z})
\xi_{\bm z}^{\frac{2n}{n-2}}\,\ud x\\
&\quad+
r_\infty^{-\frac{n-2}{2}}
\sum_{k=1}^{\ell}\alpha_k^2
\int_\Omega
F(PU_k)(PU_k)^{\frac{2n}{n-2}}\,\ud x\\
&\quad+
2r_\infty^{-\frac{n-2}{4}}
\sum_{k=1}^{\ell}\alpha_k
\int_\Omega
\left[
F(\xi_{\bm z})^{\frac n2}
\xi_{\bm z}^{\frac{2n}{n-2}}
+
F(PU_k)^{\frac n2}
(PU_k)^{\frac{2n}{n-2}}
\right]^{\frac2n}
\xi_{\bm z}PU_k\,\ud x\\
&\quad+
2r_\infty^{-\frac{n-2}{2}}
\sum_{i<j}\alpha_i\alpha_j
\int_\Omega
\left[
F(PU_i)^{\frac n2}
(PU_i)^{\frac{2n}{n-2}}
+
F(PU_j)^{\frac n2}
(PU_j)^{\frac{2n}{n-2}}
\right]^{\frac2n}
PU_iPU_j\,\ud x.
\end{aligned}
\]

We next obtain quantitative lower bounds for the last two terms.

Set
\[
\Omega_{\delta_0}
:=
\left\{
x\in\Omega:
\operatorname{dist}(x,\partial\Omega)\geq\delta_0
\right\}.
\]
We first claim that, for $\lambda$ sufficiently large,
\be\label{eq:PU-lower-interior}
PU_{a,\lambda}(x)
\geq
cU_{a,\lambda}(x)
\end{equation}
whenever
$
a\in\Omega_{\delta_0},
$ and $
x\in\Omega_{\delta_0/2}.
$
Indeed, by the Green representation formula,
\[
PU_{a,\lambda}(x)
=
\int_\Omega
G(x,y)
U_{a,\lambda}(y)^{\frac{n+2}{n-2}}
\,\ud y,
\]
where $G(x,y)$ denotes Green's function of \(-\Delta\) with Dirichlet boundary conditions in \(\Omega\). For $x,y\in\Omega_{\delta_0/2}$, one has
\[
G(x,y)\geq c|x-y|^{2-n}.
\]
Moreover,
$
B_{\lambda^{-1}}(a)
\subset\Omega_{\delta_0/2}
$
for $\lambda$ sufficiently large. Thus, setting
$
z=\lambda(x-a)
$ and 
$\eta=\lambda(y-a)$,
we obtain
\[
\begin{aligned}
PU_{a,\lambda}(x)
&\geq
c\int_{B_{\lambda^{-1}}(a)}
|x-y|^{2-n}
U_{a,\lambda}(y)^{\frac{n+2}{n-2}}
\,\ud y\\
&\geq
c\lambda^{\frac{n-2}{2}}
\int_{B_1(0)}
\frac{|z-\eta|^{2-n}}
{(1+|\eta|^2)^{\frac{n+2}{2}}}
\,\ud\eta\\
&\geq
c\lambda^{\frac{n-2}{2}}
(1+|z|^2)^{-\frac{n-2}{2}}\\
&\geq
cU_{a,\lambda}(x).
\end{aligned}
\]
The third inequality follows by considering separately
$|z|\leq2$ and $|z|>2$.

We now fix an unordered pair of distinct indices. After interchanging
the two indices within this pair, we may assume that
\[
\lambda_i\geq\lambda_j.
\]
This does not affect the cross Dirichlet integral, since
\[
\int_\Omega\nabla PU_i\cdot\nabla PU_j\,\ud x
=
-\int_\Omega PU_i\Delta PU_j\,\ud x
=
-\int_\Omega PU_j\Delta PU_i\,\ud x.
\]

If
\[
|x-a_i|\leq\lambda_i^{-1},
\]
then $x\in\Omega_{\delta_0/2}$ for $\va_0$ sufficiently small.
Using \eqref{eq:PU-lower-interior}, we have $PU_{i}^{\frac{n+2}{n-2}}\geq cU_{i}^{\frac{n+2}{n-2}}\geq c \lambda_i^{\frac{n+2}{2}}$. Also, by $|x-a_j| \leq|x-a_i|+|a_i-a_j| \leq \lambda_i^{-1}+|a_i-a_j|\leq \lambda_j^{-1}+|a_i-a_j|$, we have 
$$
1+\lambda_j^2|x-a_j|^2 \leq C(1+\lambda_j^2 |a_i-a_j|^2),
$$
which implies that
$$
U_j(x) \geq c\lambda_j^{\frac{n-2}{2}}(1+\lambda_j^2 |a_i-a_j|^2)^{-\frac{n-2}{2}}.
$$
Hence, if $|x-a_{i}| \leq \lda_{i}^{-1}$, then
\[
\begin{aligned}
(PU_i)^{\frac{n+2}{n-2}}PU_j
&\geq
c\lambda_i^{\frac{n+2}{2}}
\lambda_j^{\frac{n-2}{2}}
\left(
1+\lambda_j^2|a_i-a_j|^2
\right)^{-\frac{n-2}{2}}\\
&=c\lambda_i^n\Big(\frac{\lambda_i}{\lambda_j}+\lambda_i \lambda_j |a_i-a_j|^2\Big)^{-\frac{n-2}{2}}\\
&\geq c\lambda_i^{n}\Big(\frac{\lambda_i}{\lambda_j}+\frac{\lambda_j}{\lambda_i}+\lambda_i\lambda_j|a_i-a_j|^2\Big)^{-\frac{n-2}{2}}\\
&=
c\lambda_i^n\Lambda_{i,j}.
\end{aligned}
\]

Furthermore, if $|x-a_i| \leq \lambda_i^{-1}$, then
$$
|x-a_j| \geq |a_i-a_j|-|x-a_i|\geq |a_i-a_j|-\lambda_i^{-1}.
$$
We want to show that there exists a constant $c>0$ such that
\be\label{eq:1lambdaj}
1+\lambda_j^2|x-a_j|^2 \geq c(1+\lambda_j^2 |a_i-a_j|^2).
\ee
Indeed, if $|a_i-a_j| \leq 2 \lambda_i^{-1}$, by $\lambda_i \geq \lambda_j$, we obtain $\lambda_j |a_i-a_j| \leq 2 {\lambda_j}\lambda_i^{-1} \leq 2$ and $1+\lambda_j^2 |a_i-a_j|^2 \leq 5$. Hence,
$$
1+\lambda_j^2|x-a_j|^2 \geq 1\geq \frac{1}{5}(1+\lambda_j^2 |a_i-a_j|^2).
$$
If $|a_i-a_j| > 2 \lambda_i^{-1}$, then $|a_i-a_j|-\lambda_i^{-1} \geq \frac{1}{2} |a_i-a_j|$ and $|x-a_j| \geq \frac{1}{2} |a_i-a_j|$. Hence,
$$
1+\lambda_j^2|x-a_j|^2 \geq 1+\frac{1}{4} \lambda_j^2 |a_i-a_j|^2 \geq \frac{1}{4}(1+\lambda_j^2 |a_i-a_j|^2).
$$
Since $P U_i \leq U_i$, $P U_j \leq U_j$, it follows from \eqref{eq:1lambdaj} that
\[
\begin{aligned}
P U_i(x)(P U_j)^{\frac{n+2}{n-2}} & \leq C \lambda_i^{\frac{n-2}{2}} \lambda_j^{\frac{n+2}{2}}(1+\lambda_j^2|x-a_j|^2)^{-\frac{n+2}{2}}\\
&\leq C \lambda_i^{\frac{n-2}{2}} \lambda_j^{\frac{n+2}{2}}(1+\lambda_j^2 |a_i-a_j|^2)^{-\frac{n+2}{2}} \\
& =C \lambda_i^n\Big(\frac{\lambda_i}{\lambda_j}+\lambda_i \lambda_j |a_i-a_j|^2\Big)^{-\frac{n+2}{2}} \\
& \leq C \lambda_i^n \Big(\frac{\lambda_i}{\lambda_j}+\frac{\lambda_j}{\lambda_i}+\lambda_i\lambda_j|a_i-a_j|^2\Big)^{-\frac{n+2}{2}}.
\end{aligned}
\]
Consequently,
\be\label{eq:<<}
\frac{
PU_i(PU_j)^{\frac{n+2}{n-2}}
}{
(PU_i)^{\frac{n+2}{n-2}}PU_j
}
\leq
C\Lambda_{i,j}^{\frac4{n-2}}
\end{equation}
on $B_{\lambda_i^{-1}}(a_i)$ for $i\neq j$. The quantities $F(PU_i)$ are uniformly bounded above and below
away from zero. Therefore, \eqref{eq:<<} implies
\[
\frac{
F(PU_j)^{\frac n2}
(PU_j)^{\frac{2n}{n-2}}
}{
F(PU_i)^{\frac n2}
(PU_i)^{\frac{2n}{n-2}}
}
\leq
C\Lambda_{i,j}^{\frac{2n}{n-2}}
\]
on $B_{\lambda_i^{-1}}(a_i)$. For $0<\theta<1$, there exists $c_\theta>0$ such that
\be\label{eq:(A+B)p}
(A+B)^{\frac2n}
\geq
B^{\frac2n}
+
c_\theta A^{\frac2n}
\end{equation}
whenever
$
0\leq B\leq\theta A.
$
This gives on
$B_{\lambda_i^{-1}}(a_i)$ that
\[
\begin{aligned}
&
\left[
F(PU_i)^{\frac n2}
(PU_i)^{\frac{2n}{n-2}}
+
F(PU_j)^{\frac n2}
(PU_j)^{\frac{2n}{n-2}}
\right]^{\frac2n}
PU_iPU_j\\
&\quad\geq
F(PU_j)PU_i(PU_j)^{\frac{n+2}{n-2}}
+
cF(PU_i)(PU_i)^{\frac{n+2}{n-2}}PU_j\\
&\quad\geq
F(PU_j)PU_i(PU_j)^{\frac{n+2}{n-2}}
+
c\lambda_i^n\Lambda_{i,j}.
\end{aligned}
\]
Integrating over $B_{\lambda_i^{-1}}(a_i)$, whose measure is
comparable to $\lambda_i^{-n}$, yields
\be\label{eq:(A+B)p-1}
\begin{aligned}
&
\int_\Omega
\left[
F(PU_i)^{\frac n2}
(PU_i)^{\frac{2n}{n-2}}
+
F(PU_j)^{\frac n2}
(PU_j)^{\frac{2n}{n-2}}
\right]^{\frac2n}
PU_iPU_j\,\ud x\\
&\quad\geq
\int_\Omega
F(PU_j)PU_i(PU_j)^{\frac{n+2}{n-2}}
\,\ud x
+
c\Lambda_{i,j}.
\end{aligned}
\ee

We next estimate the interaction between $\xi_{\bm z}$ and a bubble.
Since
\[
\frac12
\leq
\frac{\xi_{\bm z}}{u_\infty}
\leq2
\]
and $u_\infty$ is positive on $\Omega_{\delta_0/2}$, there exist
constants $c,C>0$ such that
\[
c\leq\xi_{\bm z}(x)\leq C
\qquad
\mbox{for }x\in\Omega_{\delta_0/2}.
\]
On $B_{\lambda_k^{-1}}(a_k)$,
\[
PU_k\geq c\lambda_k^{\frac{n-2}{2}},
\]
and hence
\[
\frac{
F(\xi_{\bm z})^{\frac n2}
\xi_{\bm z}^{\frac{2n}{n-2}}
}{
F(PU_k)^{\frac n2}
(PU_k)^{\frac{2n}{n-2}}
}
\leq
C\lambda_k^{-n}.
\]
Applying \eqref{eq:(A+B)p}, we obtain
\[
\begin{aligned}
&
\left[
F(\xi_{\bm z})^{\frac n2}
\xi_{\bm z}^{\frac{2n}{n-2}}
+
F(PU_k)^{\frac n2}
(PU_k)^{\frac{2n}{n-2}}
\right]^{\frac2n}
\xi_{\bm z}PU_k\\
&\quad\geq
F(\xi_{\bm z})
\xi_{\bm z}^{\frac{n+2}{n-2}}PU_k
+
cF(PU_k)\xi_{\bm z}(PU_k)^{\frac{n+2}{n-2}}\\
&\quad\geq
F(\xi_{\bm z})
\xi_{\bm z}^{\frac{n+2}{n-2}}PU_k
+
c\lambda_k^{\frac{n+2}{2}}
\end{aligned}
\]
on $B_{\lambda_k^{-1}}(a_k)$. Consequently,
\be\label{164}
\begin{aligned}
&
\int_\Omega
\left[
F(\xi_{\bm z})^{\frac n2}
\xi_{\bm z}^{\frac{2n}{n-2}}
+
F(PU_k)^{\frac n2}
(PU_k)^{\frac{2n}{n-2}}
\right]^{\frac2n}
\xi_{\bm z}PU_k\,\ud x\\
&\quad\geq
\int_\Omega
F(\xi_{\bm z})
\xi_{\bm z}^{\frac{n+2}{n-2}}PU_k\,\ud x
+
c\lambda_k^{-\frac{n-2}{2}}.
\end{aligned}
\ee

Combining \eqref{eq:2.11-3}, \eqref{eq:(A+B)p-1}, and
\eqref{164}, we find
\be\label{eq:2.11-2}
\begin{aligned}
&
\left(
Y_\Omega(\xi_{\bm z})^{\frac n2}
+
\sum_{k=1}^{\ell}
Y_\Omega(PU_k)^{\frac n2}
\right)^{\frac2n}
\left(
\int_\Omega
\sigma^{\frac{2n}{n-2}}\,\ud x
\right)^{\frac{n-2}{n}}\\
&\geq
\int_\Omega
F(\xi_{\bm z})
\xi_{\bm z}^{\frac{2n}{n-2}}\,\ud x+
r_\infty^{-\frac{n-2}{2}}
\sum_{k=1}^{\ell}\alpha_k^2
\int_\Omega
F(PU_k)(PU_k)^{\frac{2n}{n-2}}\,\ud x\\
&\quad+
2r_\infty^{-\frac{n-2}{4}}
\sum_{k=1}^{\ell}\alpha_k
\int_\Omega
F(\xi_{\bm z})
\xi_{\bm z}^{\frac{n+2}{n-2}}PU_k\,\ud x+
c\sum_{k=1}^{\ell}
\lambda_k^{-\frac{n-2}{2}}\\
&\quad+
2r_\infty^{-\frac{n-2}{2}}
\sum_{i<j}\alpha_i\alpha_j
\int_\Omega
F(PU_j)PU_i(PU_j)^{\frac{n+2}{n-2}}\,\ud x+
c\sum_{i<j}\Lambda_{i,j}.
\end{aligned}
\ee
It follows from \eqref{eq:2.11-2} and \eqref{eq:2.11-1} that
\be\label{eq:2.11-4}
\begin{aligned}
&
Y_\Omega(\sigma)
\left(
\int_\Omega
\sigma^{\frac{2n}{n-2}}\,\ud x
\right)^{\frac{n-2}{n}}\\
&\leq
\left(
Y_\Omega(\xi_{\bm z})^{\frac n2}
+
\sum_{k=1}^{\ell}
Y_\Omega(PU_k)^{\frac n2}
\right)^{\frac2n}
\left(
\int_\Omega
\sigma^{\frac{2n}{n-2}}\,\ud x
\right)^{\frac{n-2}{n}}\\
&\quad-
2r_\infty^{-\frac{n-2}{4}}
\sum_{k=1}^{\ell}\alpha_k
\int_\Omega
PU_k
\left[
\Delta\xi_{\bm z}
+
F(\xi_{\bm z})
\xi_{\bm z}^{\frac{n+2}{n-2}}
\right]\,\ud x\\
&\quad-
2r_\infty^{-\frac{n-2}{2}}
\sum_{i<j}\alpha_i\alpha_j
\int_\Omega
PU_i
\left[
\Delta PU_j
+
F(PU_j)(PU_j)^{\frac{n+2}{n-2}}
\right]\,\ud x\\
&\quad-
c\sum_{k=1}^{\ell}
\lambda_k^{-\frac{n-2}{2}}
-
c\sum_{i<j}\Lambda_{i,j}.
\end{aligned}
\ee

We now estimate the two error terms. Since the map
$\bm z\mapsto\xi_{\bm z}$ is analytic in
$W^{2,p}(\Omega)\cap W_0^{1,p}(\Omega)$ and
\[
F(u_\infty)
=
\frac{\int_\Omega|\nabla u_\infty|^2\,\ud x}
{\int_\Omega u_\infty^{\frac{2n}{n-2}}\,\ud x}
=
r_\infty,
\]
we have
\[
\left\|
\Delta\xi_{\bm z}
+
F(\xi_{\bm z})
\xi_{\bm z}^{\frac{n+2}{n-2}}
\right\|_{L^p(\Omega)}
\leq
C|\bm z|.
\]
Moreover, since $p>n$,
\[
\|PU_k\|_{L^{\frac p{p-1}}(\Omega)}
\leq
C\lambda_k^{-\frac{n-2}{2}}.
\]
Therefore,
\[
\begin{aligned}
&
\left|
\int_\Omega
PU_k
\left[
\Delta\xi_{\bm z}
+
F(\xi_{\bm z})
\xi_{\bm z}^{\frac{n+2}{n-2}}
\right]\,\ud x
\right|\leq
C|\bm z|\lambda_k^{-\frac{n-2}{2}}
=
o\left(
\lambda_k^{-\frac{n-2}{2}}
\right).
\end{aligned}
\]

It follows from \eqref{eq:PUiUi^p} and
\eqref{eq:PUi^p+1} that
\[
F(PU_j)
=
1+
\frac{n+2}{n-2}
\frac{C_1}{C_2}
\lambda_j^{2-n}H(a_j,a_j)
+
o(\lambda_j^{2-n}).
\]
Since
\[
\Delta PU_j=-U_j^{\frac{n+2}{n-2}},
\]
we obtain
\[
\begin{aligned}
&
\left|
\int_\Omega
PU_i
\left[
\Delta PU_j
+
F(PU_j)(PU_j)^{\frac{n+2}{n-2}}
\right]\,\ud x
\right|\\
&\quad\leq
\int_\Omega
PU_i
\left[
U_j^{\frac{n+2}{n-2}}
-
(PU_j)^{\frac{n+2}{n-2}}
\right]\,\ud x+
|F(PU_j)-1|
\int_\Omega
PU_i(PU_j)^{\frac{n+2}{n-2}}\,\ud x\\
&\quad=
o(\Lambda_{i,j})
+
o(\lambda_i^{2-n})
+
o(\lambda_j^{2-n}),
\end{aligned}
\]
where we used \eqref{eq:linear-projection-error} and \eqref{eq:PUiPUj^p} in the last equality.

Since
$
\lambda_k^{2-n}
=
o\left(
\lambda_k^{-\frac{n-2}{2}}
\right),
$
all these error terms can be absorbed into the two positive
deficits in \eqref{eq:2.11-4}, after decreasing $\va_0$ if
necessary. Hence,
\[
\begin{aligned}
Y_\Omega(\sigma)
\left(
\int_\Omega
\sigma^{\frac{2n}{n-2}}\,\ud x
\right)^{\frac{n-2}{n}}
&\leq
\left(
Y_\Omega(\xi_{\bm z})^{\frac n2}
+
\sum_{k=1}^{\ell}
Y_\Omega(PU_k)^{\frac n2}
\right)^{\frac2n}
\left(
\int_\Omega
\sigma^{\frac{2n}{n-2}}\,\ud x
\right)^{\frac{n-2}{n}}\\
&\qquad-
c\sum_{k=1}^{\ell}
\lambda_k^{-\frac{n-2}{2}}
-
c\sum_{i<j}\Lambda_{i,j}.
\end{aligned}
\]

We finally estimate the mass of $\sigma$. Using
\[
\left|
(a+b)^{\frac{2n}{n-2}}
-a^{\frac{2n}{n-2}}
-b^{\frac{2n}{n-2}}
\right|
\leq
C\left(
a^{\frac{n+2}{n-2}}b
+
ab^{\frac{n+2}{n-2}}
\right),
\qquad a,b\geq0,
\]
together with the uniform boundedness of $\xi_{\bm z}$ and
\[
\int_\Omega PU_k\,\ud x
+
\int_\Omega
(PU_k)^{\frac{n+2}{n-2}}\,\ud x
\leq
C\lambda_k^{-\frac{n-2}{2}},
\]
we obtain
\[
\begin{aligned}
\int_\Omega
\sigma^{\frac{2n}{n-2}}\,\ud x
={}&
\int_\Omega
\xi_{\bm z}^{\frac{2n}{n-2}}\,\ud x+
r_\infty^{-\frac n2}
\int_\Omega
\left(
\sum_{k=1}^{\ell}\alpha_kPU_k
\right)^{\frac{2n}{n-2}}\,\ud x
+
o(1).
\end{aligned}
\]
By Lemma \ref{lem:regular-surface-expansions},
\[
\int_\Omega
\xi_{\bm z}^{\frac{2n}{n-2}}\,\ud x
=
\int_\Omega
u_\infty^{\frac{2n}{n-2}}\,\ud x
+
o(1).
\]
Moreover, \eqref{eq:sum-bubbles-power}, 
\eqref{eq:PUi^p+1} and \eqref{eq:PUiPUj^p} give
\[
\begin{aligned}
\int_\Omega
\left(
\sum_{k=1}^{\ell}\alpha_kPU_k
\right)^{\frac{2n}{n-2}}\,\ud x
&=
\sum_{k=1}^{\ell}
\alpha_k^{\frac{2n}{n-2}}
\int_\Omega
(PU_k)^{\frac{2n}{n-2}}\,\ud x
+
o(1)=
C_2\ell+o(1).
\end{aligned}
\]
Consequently,
\[
\int_\Omega
\sigma^{\frac{2n}{n-2}}\,\ud x
=
\int_\Omega
u_\infty^{\frac{2n}{n-2}}\,\ud x
+
r_\infty^{-\frac n2}C_2\ell
+
o(1).
\]
Since $u_\infty$ satisfies
\[
-\Delta u_\infty
=
r_\infty
u_\infty^{\frac{n+2}{n-2}},
\]
we have
\[
Y_\Omega(u_\infty)
=
r_\infty
\left(
\int_\Omega
u_\infty^{\frac{2n}{n-2}}\,\ud x
\right)^{\frac2n},
\]
and therefore
\[
\int_\Omega
u_\infty^{\frac{2n}{n-2}}\,\ud x
=
r_\infty^{-\frac n2}
Y_\Omega(u_\infty)^{\frac n2}.
\]
It follows that
\be\label{eq:estsigma}
\begin{aligned}
\int_\Omega
\sigma^{\frac{2n}{n-2}}\,\ud x
&=
r_\infty^{-\frac n2}
\left(
Y_\Omega(u_\infty)^{\frac n2}
+
C_2\ell
\right)
+
o(1)=
1+o(1),
\end{aligned}
\ee
where in the last equality we used \eqref{eq:r-infty-level}.  Dividing the preceding
inequality by this factor and decreasing $c>0$ proves the result.
\end{proof}

It follows from \eqref{eq:PUiUi^p} and
\eqref{eq:PUi^p+1} that
\[
\begin{aligned}
Y_\Omega(PU_k)
&=
\frac{
C_2
-
C_1\lambda_k^{2-n}H(a_k,a_k)
+
o(\lambda_k^{2-n})
}{
\left(
C_2
-
\frac{2n}{n-2}
C_1\lambda_k^{2-n}H(a_k,a_k)
+
o(\lambda_k^{2-n})
\right)^{\frac{n-2}{n}}}\\
&=
C_2^{\frac2n}
+
C_1C_2^{\frac{2-n}{n}}
\lambda_k^{2-n}H(a_k,a_k)
+
o(\lambda_k^{2-n}).
\end{aligned}
\]
In particular,
\[
Y_\Omega(PU_k)^{\frac n2}
=
C_2+O(\lambda_k^{2-n}).
\]

By Proposition \ref{prop:regular-part},
\[
\begin{aligned}
Y_\Omega(\xi_{\bm z})
\leq{}&
Y_\Omega(u_\infty)+
C
\left(
\int_\Omega
|\mathcal R_u-r_\infty|^{\frac{2n}{n+2}}
u^{\frac{2n}{n-2}}\,\ud x
\right)^{\frac{n+2}{2n}(1+\gamma)}+
C\sum_{k=1}^{\ell}
\lambda_k^{-\frac{n-2}{2}(1+\gamma)}.
\end{aligned}
\]
Therefore, Proposition \ref{pro:Yom(u)-1} yields
\[
\begin{aligned}
&Y_\Omega(\sigma)
+
c\sum_{k=1}^{\ell}
\lambda_k^{-\frac{n-2}{2}}
+
c\sum_{i<j}\Lambda_{i,j}\\
&\leq
\left(
Y_\Omega(u_\infty)^{\frac n2}
+
C_2\ell
\right)^{\frac2n}+
C
\left(
\int_\Omega
|\mathcal R_u-r_\infty|^{\frac{2n}{n+2}}
u^{\frac{2n}{n-2}}\,\ud x
\right)^{\frac{n+2}{2n}(1+\gamma)}\\
&\quad+
C\sum_{k=1}^{\ell}
\lambda_k^{-\frac{n-2}{2}(1+\gamma)}
+
C\sum_{k=1}^{\ell}\lambda_k^{2-n}.
\end{aligned}
\]
Using  \eqref{eq:r-infty-level} 
and observing that
$
\lambda_k^{-\frac{n-2}{2}(1+\gamma)}
+
\lambda_k^{2-n}
=
o\left(
\lambda_k^{-\frac{n-2}{2}}
\right),
$
we may absorb the last two sums into the left-hand side and finally obtain
\be\label{eq:Bruinfty>0}
\begin{aligned}
&Y_\Omega(\sigma)
+
c\sum_{k=1}^{\ell}
\lambda_k^{-\frac{n-2}{2}}
+
c\sum_{i<j}\Lambda_{i,j}\leq
r_\infty
+
C
\left(
\int_\Omega
|\mathcal R_u-r_\infty|^{\frac{2n}{n+2}}
u^{\frac{2n}{n-2}}\,\ud x
\right)^{\frac{n+2}{2n}(1+\gamma)}.
\end{aligned}
\ee

\section{Projection estimates and gradient inequalities}\label{sec:gradient}

We will derive energy expansions near the
almost-critical surfaces in terms of the interaction parameters
$\Lambda_{i,j}$, the concentration parameters $\lambda_k$, the
amplitude defects $|\alpha_k-1|$, and the orthogonal remainder $w$.
In this section, we estimate these quantities in terms of the
Euler--Lagrange residual first.

When $u_\infty>0$, let $L_1$ be the largest integer such that
\[
\mu_l<\frac{n+2}{n-2}r_\infty,
\qquad 1\leq l\leq L_1.
\]
We set
\[
\bm z'=(z_1,\ldots,z_{L_1}),
\qquad
\bm z''=(z_{L_1+1},\ldots,z_L).
\]
If $L_1=L$, we use the convention $\bm z''=0$.

Our main estimates are as follows.

\begin{thm}\label{thm:LS}
There exist $\va_1\in(0,1)$ and $C>0$, depending only on the fixed
data, such that the following assertions hold for every
$\va\in(0,\va_1)$ and every normalized function
$
u\in\mathcal D\cap\V(u_\infty,\ell,\delta;\va)
$
satisfying
$
\|\mathcal R_u-r_\infty\|_{C^0(\Omega)}<\va
$
and
$
\int_\Omega u^{\frac{2n}{n-2}}\,\ud x=1.
$
Let
\[
u=\sigma+w
\]
be the decomposition determined by \eqref{eq:vp} and \eqref{eq:decomposition-1}. Recall from \eqref{eq:Gu} that
\[
 \mathfrak G(u)
 =
 \left(
 \int_\Omega
 |\mathcal R_u-r_\infty|^{\frac{2n}{n+2}}
 u^{\frac{2n}{n-2}}\,\ud x
 \right)^{\frac{n+2}{2n}}.
\]
Then we have
\begin{itemize}

\item[(i)] \textbf{Estimate of the amplitude parameters.}

If $u_\infty\equiv0$, then
\be\label{thm:alpha-1-0}
\begin{aligned}
\sum_{k=1}^{\ell}|\alpha_k-1|
={}&
O\left(\mathfrak G(u)\right)+
O\left(\sum_{k=1}^{\ell}\lambda_k^{2-n}\right)
+
O\left(\sum_{i\neq j}\Lambda_{i,j}\right).
\end{aligned}
\ee

If $u_\infty>0$, then
\be\label{thm:alpha-1}
\begin{aligned}
\sum_{k=1}^{\ell}|\alpha_k-1|
={}&
O\left(\mathfrak G(u)\right)+
O\left(
\sum_{k=1}^{\ell}
\lambda_k^{-\frac{n-2}{2}}
\right)
+
O\left(
\sum_{i\neq j}\Lambda_{i,j}
\right).
\end{aligned}
\ee

Under the additional assumption that
$
Y_\Omega(u)\geq r_\infty,
$
we have
\be\label{eq:alpha-1inc}
\sum_{k=1}^{\ell}|\alpha_k-1|
=
\begin{cases}
\displaystyle
O\left( \mathfrak G(u)\right)
+
O\left(
\sum_{k=1}^{\ell}
\lambda_k^{
2-n}
\right),
& u_\infty\equiv0,\\[4mm]
\displaystyle
O\left( \mathfrak G(u)\right),
& u_\infty>0.
\end{cases}
\ee

\item[(ii)] \textbf{Estimate of the orthogonal remainder.}

If $u_\infty\equiv0$, then
\be\label{thm:w-0}
\begin{aligned}
\|w\|
={}&
O\left( \mathfrak G(u)\right)\\
&+
\begin{cases}
\displaystyle
O\left(
\sum_{k=1}^{\ell}\lambda_k^{-\frac{n+2}{2}}
\right)+O\left(
\sum_{i\neq j}
\Lambda_{i,j}^{\frac{n+2}{2(n-2)}}
\left(
\ln\Lambda_{i,j}^{-1}
\right)^{\frac{n+2}{2n}}
\right),
& n>6,\\[2mm]
\displaystyle
O\left(
\sum_{k=1}^{\ell}
\lambda_k^{-4}(\ln\lambda_k)^{\frac23}
\right)+O\left(
\sum_{i\neq j}
\Lambda_{i,j}
\left(
\ln\Lambda_{i,j}^{-1}
\right)^{\frac{2}{3}}
\right),
& n=6,\\[2mm]
\displaystyle
O\left(
\sum_{k=1}^{\ell}\lambda_k^{2-n}
\right)+O\left(
\sum_{i\neq j}\Lambda_{i,j}
\right),
& n=3,4,5,
\end{cases}
\end{aligned}
\ee

If $u_\infty>0$, then
\be\label{thm:w}
\begin{aligned}
\|w\|
={}&
O
\left( \mathfrak G(u)
\right)\\
&+
\begin{cases}
\displaystyle
O\left(\sum_{k=1}^{\ell}
\lambda_k^{-\frac{n+2}{4}}
(\ln\lambda_k)^{\frac{n+2}{2n}}\right)+O\left(\sum_{i\neq j}
\Lambda_{i,j}^{\frac{n+2}{2(n-2)}}
\left(
\ln\Lambda_{i,j}^{-1}
\right)^{\frac{n+2}{2n}}\right),
&n\geq6,\\[3mm]
\displaystyle
O\left(\sum_{k=1}^{\ell}\lambda_k^{\frac{2-n}{2}}\right)+O\left(\sum_{i\neq j}\Lambda_{i,j}\right),
&n=3,4,5,
\end{cases}
\end{aligned}
\ee

If $u_\infty\equiv0$ and $Y_\Omega(u)\geq r_\infty$, then, 
\be\label{eq:w3}
\begin{aligned}
\|w\|
={}&
O\left(\mathfrak G(u)\right)+
\begin{cases}
\displaystyle
O\left(
\sum_{k=1}^{\ell}\lambda_k^{-\frac{n+2}{2}}(\ln\lambda_k)^\frac{n+2}{2n}
\right),
& n\geq6,\\[2mm]
\displaystyle
O\left(
\sum_{k=1}^{\ell}\lambda_k^{2-n}
\right),
& n=3,4,5.
\end{cases}
\end{aligned}
\ee

If $u_\infty>0$ and $Y_\Omega(u)\geq r_\infty$, then
\be\label{eq:w4}
\|w\|
=
\begin{cases}
\displaystyle
O\left(\mathfrak G(u)\right)+
O\left(
\mathfrak G(u)^{\frac{(n+2)}{2(n-2)}(1+\gamma)}
\left(-\ln\mathfrak G(u)\right)^{\frac{n+2}{2n}}\right),\quad
&n\geq6,\\[5mm]
\displaystyle
O\left(\mathfrak G(u)\right), \quad
&n=3,4,5.
\end{cases}
\ee

\item[(iii)] \textbf{Estimate of the regular-part parameters.}

Suppose that $u_\infty>0$. Then
\[
\begin{aligned}
|\bm z'|^2
={}&
o(|\bm z''|^2)+
O\left(\mathfrak G(u)^{1+\gamma}\right)+
O\left(
\sum_{k=1}^{\ell}
\lambda_k^{-\frac{n-2}{2}(1+\gamma)}
\right).
\end{aligned}
\]
Under the additional assumption that
$
Y_\Omega(u)\geq r_\infty,
$
we have
\[
|\bm z'|^2
=
o(|\bm z''|^2)
+
O\left(\mathfrak G(u)^{1+\gamma}\right).
\]

\end{itemize}
\end{thm}

\begin{proof}[Proof of Theorem \ref{thm:LS}\,(i) and (ii)]

If $\ell=0$, the assertions concerning the amplitude parameters are
empty. We therefore assume $\ell\geq1$ whenever the parameters
$\alpha_i$, $a_i$, and $\lambda_i$ occur.

Set
\[
\delta(n):=
\min\left\{
1,\frac4{n-2}
\right\}.
\]

We first derive several estimates that will be used in both cases
$u_\infty\equiv0$ and $u_\infty>0$.

\medskip

\noindent
\textbf{Step 1. Common amplitude estimates.}

\medskip

Fix $1\leq j\leq\ell$. Recall from \eqref{eq:w-bubble-orthogonality} that
\be\label{eq:amplitude-orthogonality}
\int_\Omega
u^{\frac4{n-2}}PU_jw\,\ud x=0.
\ee
Since $\alpha_j$ remains in a fixed neighborhood of $1$, in either
case we have
\be\label{eq:sigma-dominates-PUj}
\sigma\geq cPU_j
\qquad\mbox{in }\Omega
\ee
for some constant $c>0$ independent of the parameters.

We first prove
\be\label{eq:quadratic-amplitude-orthogonality}
\left|
\int_\Omega
\sigma^{\frac4{n-2}}PU_jw\,\ud x
\right|
\leq C\|w\|^2.
\ee
Indeed, by \eqref{eq:amplitude-orthogonality},
\[
\int_\Omega
\sigma^{\frac4{n-2}}PU_jw\,\ud x
=
\int_\Omega
\left(
\sigma^{\frac4{n-2}}
-u^{\frac4{n-2}}
\right)PU_jw\,\ud x.
\]

Suppose first that $3\leq n\leq6$. Then
\[
\left|
u^{\frac4{n-2}}
-\sigma^{\frac4{n-2}}
\right|
\leq
C\sigma^{\frac{6-n}{n-2}}|w|
+
C|w|^{\frac4{n-2}}.
\]
Consequently,
\[
\begin{aligned}
&
\left|
\int_\Omega
\left(
\sigma^{\frac4{n-2}}
-u^{\frac4{n-2}}
\right)PU_jw\,\ud x
\right|\leq
C\int_\Omega
\sigma^{\frac{6-n}{n-2}}PU_jw^2\,\ud x
+
C\int_\Omega
PU_j|w|^{\frac{n+2}{n-2}}\,\ud x.
\end{aligned}
\]
By H\"older's and Sobolev's inequalities,
\[
\begin{aligned}
\int_\Omega
\sigma^{\frac{6-n}{n-2}}PU_jw^2\,\ud x
&\leq
C
\left\|
\sigma^{\frac{6-n}{n-2}}PU_j
\right\|_{L^{\frac n2}(\Omega)}
\|w\|_{L^{\frac{2n}{n-2}}(\Omega)}^2\leq C\|w\|^2,
\end{aligned}
\]
and
\[
\begin{aligned}
\int_\Omega
PU_j|w|^{\frac{n+2}{n-2}}\,\ud x
&\leq
C
\|w\|_{L^{\frac{2n}{n-2}}(\Omega)}
 ^{\frac{n+2}{n-2}}\leq C\|w\|^2.
\end{aligned}
\]
Here we used the smallness of $\|w\|$ and
\[
\frac{n+2}{n-2}\geq2
\qquad\mbox{for }3\leq n\leq6.
\]

Suppose now that $n>6$. Since $\frac4{n-2}\in(0,1)$, for
$a>0$ and $b\geq-a$ one has
\[
\left|
(a+b)^{\frac4{n-2}}
-a^{\frac4{n-2}}
\right|
\leq
Ca^{\frac{6-n}{n-2}}|b|.
\]
Using \eqref{eq:sigma-dominates-PUj}, we obtain
\[
\begin{aligned}
&
\left|
\int_\Omega
\left(
\sigma^{\frac4{n-2}}
-u^{\frac4{n-2}}
\right)PU_jw\,\ud x
\right|\leq
C\int_\Omega
\sigma^{\frac{6-n}{n-2}}PU_jw^2\,\ud x\leq
C\int_\Omega
(PU_j)^{\frac4{n-2}}w^2\,\ud x\leq C\|w\|^2.
\end{aligned}
\]
This proves \eqref{eq:quadratic-amplitude-orthogonality}.

We next claim that
\be\label{eq:u-power-reduction}
\int_\Omega
u^{\frac{n+2}{n-2}}PU_j\,\ud x
=
\int_\Omega
\sigma^{\frac{n+2}{n-2}}PU_j\,\ud x
+
O(\|w\|^2).
\ee
For $3\leq n\leq6$, we use
\[
\begin{aligned}
&
\left|
(\sigma+w)^{\frac{n+2}{n-2}}
-\sigma^{\frac{n+2}{n-2}}
-\frac{n+2}{n-2}
\sigma^{\frac4{n-2}}w
\right|\leq
C\sigma^{\frac{6-n}{n-2}}w^2
+
C|w|^{\frac{n+2}{n-2}}.
\end{aligned}
\]
After multiplication by $PU_j$ and integration, the two terms on
the right are $O(\|w\|^2)$ by the preceding estimates. The linear
term is also $O(\|w\|^2)$ by
\eqref{eq:quadratic-amplitude-orthogonality}.

For $n>6$, we use
\[
\begin{aligned}
&
\left|
(\sigma+w)^{\frac{n+2}{n-2}}
-\sigma^{\frac{n+2}{n-2}}
-\frac{n+2}{n-2}
\sigma^{\frac4{n-2}}w
\right|\leq
C\sigma^{\frac{6-n}{n-2}}w^2.
\end{aligned}
\]
By \eqref{eq:sigma-dominates-PUj},
\[
\int_\Omega
\sigma^{\frac{6-n}{n-2}}PU_jw^2\,\ud x
\leq
C\int_\Omega
(PU_j)^{\frac4{n-2}}w^2\,\ud x
\leq C\|w\|^2.
\]
Together with \eqref{eq:quadratic-amplitude-orthogonality}, this
proves \eqref{eq:u-power-reduction}.

We shall also use the following standard estimate following from H\"older's inequality
\be\label{eq:amplitude-residual}
\begin{aligned}
&
\left|
\int_\Omega
(\mathcal R_u-r_\infty)
u^{\frac{n+2}{n-2}}PU_j\,\ud x
\right|\leq
C
\left(
\int_\Omega
|\mathcal R_u-r_\infty|^{\frac{2n}{n+2}}
u^{\frac{2n}{n-2}}\,\ud x
\right)^{\frac{n+2}{2n}}.
\end{aligned}
\ee

\medskip

\noindent
\textbf{Step 2. Preliminary amplitude estimate when
$u_\infty\equiv0$.}

\medskip

In this case,
\[
\sigma
=
r_\infty^{-\frac{n-2}{4}}
\sum_{i=1}^{\ell}\alpha_iPU_i.
\]
By the definition of $\mathcal R_u$,
\be\label{eq:amplitude-equation-zero}
\begin{aligned}
0
={}&
\int_\Omega
\left(
\Delta u+\mathcal R_u
u^{\frac{n+2}{n-2}}
\right)PU_j\,\ud x\\
={}&
r_\infty
\int_\Omega
u^{\frac{n+2}{n-2}}PU_j\,\ud x+
r_\infty^{-\frac{n-2}{4}}
\sum_{i=1}^{\ell}\alpha_i
\int_\Omega
\Delta PU_i\,PU_j\,\ud x\\
&+
\int_\Omega
\Delta w\,PU_j\,\ud x+
\int_\Omega
(\mathcal R_u-r_\infty)
u^{\frac{n+2}{n-2}}PU_j\,\ud x.
\end{aligned}
\ee
We are going to establish 
\be\label{eq:sigma-power-PUj-zero}
\begin{aligned}
\int_\Omega
\sigma^{\frac{n+2}{n-2}}PU_j\,\ud x
&=
r_\infty^{-\frac{n+2}{4}}
\alpha_j^{\frac{n+2}{n-2}}C_2-
\frac{2n}{n-2}
r_\infty^{-\frac{n+2}{4}}
C_1\lambda_j^{2-n}H(a_j,a_j)\\
&\quad+
\frac{2n}{n-2}
r_\infty^{-\frac{n+2}{4}}
C_1
\sum_{i\neq j}
\left[
c_0\Lambda_{i,j}
-
(\lambda_i\lambda_j)^{-\frac{n-2}{2}}
H(a_i,a_j)
\right]\\
&\quad+
o(\bm\Lambda)
+
o\left(
\sum_{k=1}^{\ell}\lambda_k^{2-n}
\right)
+
O\left(
\sum_{k=1}^{\ell}|\alpha_k-1|^2
\right).
\end{aligned}
\ee
By decreasing $\va_0$ if necessary, we may assume that
\[
\frac12\leq\alpha_i\leq\frac32,
\qquad 1\leq i\leq\ell.
\]
Fix $1\leq j\leq\ell$ and set
\be\label{eq:Aj}
A_j
=
\left\{
x\in\Omega:
\alpha_jPU_j
\geq
\sum_{i\neq j}\alpha_iPU_i
\right\}.
\ee
Choose $\varepsilon>0$ sufficiently small so that
\[
\varepsilon<
\min\left\{
1,\frac{2}{n-2}
\right\},
\]
and, when $3\leq n\leq5$, also
\[
\varepsilon<
\frac{6-n}{n-2}.
\]

On $A_j$, Taylor's formula gives
\[
\begin{aligned}
&
\left(
\sum_{i=1}^{\ell}\alpha_iPU_i
\right)^{\frac{n+2}{n-2}}\\
={}&
(\alpha_jPU_j)^{\frac{n+2}{n-2}}+
\frac{n+2}{n-2}
(\alpha_jPU_j)^{\frac4{n-2}}
\sum_{i\neq j}\alpha_iPU_i+
O\left(
(PU_j)^{\frac4{n-2}-\varepsilon}
\left(
\sum_{i\neq j}PU_i
\right)^{1+\varepsilon}
\right).
\end{aligned}
\]
After multiplication by $PU_j$, the remainder is bounded by
\[
C\sum_{i\neq j}
(PU_j)^{\frac{n+2}{n-2}-\varepsilon}
(PU_i)^{1+\varepsilon}.
\]
It follows from \eqref{eq:two-bubble-epsilon} that
\[
\begin{aligned}
&
\int_{A_j}
(PU_j)^{\frac4{n-2}-\varepsilon}
\left(
\sum_{i\neq j}PU_i
\right)^{1+\varepsilon}
PU_j\,\ud x=
o\left(
\sum_{i\neq j}\Lambda_{i,j}
\right).
\end{aligned}
\]
Therefore,
\be\label{eq:pure-bubble-tested-Bj}
\begin{aligned}
&
\int_{A_j}
\left(
\sum_{i=1}^{\ell}\alpha_iPU_i
\right)^{\frac{n+2}{n-2}}
PU_j\,\ud x\\
={}&
\alpha_j^{\frac{n+2}{n-2}}
\int_{A_j}
(PU_j)^{\frac{2n}{n-2}}\,\ud x+
\frac{n+2}{n-2}
\alpha_j^{\frac4{n-2}}
\sum_{i\neq j}\alpha_i
\int_{A_j}
(PU_j)^{\frac{n+2}{n-2}}PU_i\,\ud x+
o\left(
\sum_{i\neq j}\Lambda_{i,j}
\right).
\end{aligned}
\ee

On $A_j^c$, we have
\[
\alpha_jPU_j
<
\sum_{i\neq j}\alpha_iPU_i.
\]
Taylor expansion around the sum of the remaining bubbles gives
\[
\begin{aligned}
&
\left(
\sum_{i=1}^{\ell}\alpha_iPU_i
\right)^{\frac{n+2}{n-2}}\\
={}&
\left(
\sum_{i\neq j}\alpha_iPU_i
\right)^{\frac{n+2}{n-2}}+
\frac{n+2}{n-2}
\left(
\sum_{i\neq j}\alpha_iPU_i
\right)^{\frac4{n-2}}
\alpha_jPU_j+
O\left(
\Big(
\sum_{i\neq j}PU_i
\Big)^{\frac4{n-2}-\varepsilon}
(PU_j)^{1+\varepsilon}
\right).
\end{aligned}
\]
After multiplication by $PU_j$, the last two terms are bounded by
\[
C
\left(
\sum_{i\neq j}PU_i
\right)^{\frac{n+2}{n-2}-\varepsilon}
(PU_j)^{1+\varepsilon}.
\]
Since $\ell$ is fixed,
\[
\left(
\sum_{i\neq j}PU_i
\right)^{\frac{n+2}{n-2}-\varepsilon}
\leq
C\sum_{i\neq j}
(PU_i)^{\frac{n+2}{n-2}-\varepsilon}.
\]
Thus, by \eqref{eq:two-bubble-epsilon},
\be\label{eq:pure-bubble-tested-Bjc}
\begin{aligned}
&
\int_{A_j^c}
\left(
\sum_{i=1}^{\ell}\alpha_iPU_i
\right)^{\frac{n+2}{n-2}}
PU_j\,\ud x
=
\int_{A_j^c}
\left(
\sum_{i\neq j}\alpha_iPU_i
\right)^{\frac{n+2}{n-2}}
PU_j\,\ud x+
o\left(
\sum_{i\neq j}\Lambda_{i,j}
\right).
\end{aligned}
\ee

As in Section \ref{sec:2}, we also have
\[
\begin{aligned}
&
\int_{A_j^c}
(PU_j)^{\frac{2n}{n-2}}\,\ud x+
\sum_{i\neq j}
\int_{A_j^c}
(PU_j)^{\frac{n+2}{n-2}}PU_i\,\ud x+
\int_{A_j}
\left(
\sum_{i\neq j}\alpha_iPU_i
\right)^{\frac{n+2}{n-2}}
PU_j\,\ud x\\
&\qquad=
o\left(
\sum_{i\neq j}\Lambda_{i,j}
\right).
\end{aligned}
\]
Indeed, on $A_j$ the remaining bubbles are bounded by
$CPU_j$, while on $A_j^c$ the $j$-th bubble is bounded by a
constant multiple of the sum of the remaining bubbles; one then
uses \eqref{eq:two-bubble-epsilon} exactly as above.

Combining \eqref{eq:pure-bubble-tested-Bj} and
\eqref{eq:pure-bubble-tested-Bjc}, we obtain
\be\label{eq:pure-bubble-tested-intermediate}
\begin{aligned}
&
\int_\Omega
\left(
\sum_{i=1}^{\ell}\alpha_iPU_i
\right)^{\frac{n+2}{n-2}}
PU_j\,\ud x\\
&=
\alpha_j^{\frac{n+2}{n-2}}
\int_\Omega
(PU_j)^{\frac{2n}{n-2}}\,\ud x+
\frac{n+2}{n-2}
\alpha_j^{\frac4{n-2}}
\sum_{i\neq j}\alpha_i
\int_\Omega
(PU_j)^{\frac{n+2}{n-2}}PU_i\,\ud x\\
&\quad+
\int_\Omega
\left(
\sum_{i\neq j}\alpha_iPU_i
\right)^{\frac{n+2}{n-2}}
PU_j\,\ud x+
o\left(
\sum_{i\neq j}\Lambda_{i,j}
\right).
\end{aligned}
\ee

It remains to expand the last integral. We claim that
\be\label{eq:remaining-bubbles-tested}
\begin{aligned}
&
\int_\Omega
\left|
\left(
\sum_{i\neq j}\alpha_iPU_i
\right)^{\frac{n+2}{n-2}}
-
\sum_{i\neq j}
\alpha_i^{\frac{n+2}{n-2}}
(PU_i)^{\frac{n+2}{n-2}}
\right|
PU_j\,\ud x=
o(\bm\Lambda).
\end{aligned}
\ee

For $3\leq n\leq5$, the algebraic inequality
\[
\left|
\left(\sum_i b_i\right)^{\frac{n+2}{n-2}}
-
\sum_i b_i^{\frac{n+2}{n-2}}
\right|
\leq
C\sum_{r\neq s}
b_r^{\frac4{n-2}}b_s
\]
gives triple integrals of the form
\[
\int_\Omega
(PU_r)^{\frac4{n-2}}PU_sPU_j\,\ud x.
\]
By Young's inequality,
\[
\begin{aligned}
&
\int_\Omega
(PU_r)^{\frac4{n-2}}PU_sPU_j\,\ud x\leq
\frac12
\int_\Omega
(PU_r)^{\frac4{n-2}}(PU_s)^2\,\ud x
+
\frac12
\int_\Omega
(PU_r)^{\frac4{n-2}}(PU_j)^2\,\ud x.
\end{aligned}
\]
Splitting each integral into the regions where the two bubbles
dominate one another and using \eqref{eq:two-bubble-epsilon}, we
obtain
\[
\int_\Omega
(PU_r)^{\frac4{n-2}}(PU_s)^2\,\ud x
=
o(\Lambda_{r,s}),
\]
and
\[
\int_\Omega
(PU_r)^{\frac4{n-2}}(PU_j)^2\,\ud x
=
o(\Lambda_{r,j}).
\]
After summing over the finitely many indices, this proves
\eqref{eq:remaining-bubbles-tested} for $3\leq n\leq5$.

For $n\geq6$, we use
\[
\left|
\left(\sum_i b_i\right)^{\frac{n+2}{n-2}}
-
\sum_i b_i^{\frac{n+2}{n-2}}
\right|
\leq
C\sum_{r\neq s}
(b_rb_s)^{\frac{n+2}{2(n-2)}}.
\]
For each triple $(r,s,j)$, partition $\Omega$ into the regions
where $PU_r$, $PU_s$, or $PU_j$ is the largest. On the region where
$PU_r$ is the largest, one has, for sufficiently small
$\varepsilon>0$,
\[
\begin{aligned}
&
(PU_rPU_s)^{\frac{n+2}{2(n-2)}}PU_j\leq
C
(PU_r)^{\frac{n+2}{n-2}-\varepsilon}
(PU_s)^{1+\varepsilon}
+
C
(PU_r)^{\frac{n+2}{n-2}-\varepsilon}
(PU_j)^{1+\varepsilon}.
\end{aligned}
\]
The analogous estimates hold on the other two regions. Therefore,
by \eqref{eq:two-bubble-epsilon},
\[
\begin{aligned}
&
\int_\Omega
(PU_rPU_s)^{\frac{n+2}{2(n-2)}}PU_j\,\ud x\leq
C\Lambda_{r,s}^{1+\varepsilon}
+
C\Lambda_{r,j}^{1+\varepsilon}
+
C\Lambda_{s,j}^{1+\varepsilon}.
\end{aligned}
\]
Summing over the finitely many triples yields
$
o(\bm\Lambda),
$
which proves \eqref{eq:remaining-bubbles-tested} for $n\geq6$.

It follows from
\eqref{eq:pure-bubble-tested-intermediate} and
\eqref{eq:remaining-bubbles-tested} that
for every $1\leq j\leq\ell$, we have
\be\label{eq:pure-bubble-tested}
\begin{aligned}
&
\int_\Omega
\left(
\sum_{i=1}^{\ell}\alpha_iPU_i
\right)^{\frac{n+2}{n-2}}
PU_j\,\ud x\\
&=
\alpha_j^{\frac{n+2}{n-2}}
\int_\Omega
(PU_j)^{\frac{2n}{n-2}}\,\ud x+
\sum_{i\neq j}
\alpha_i^{\frac{n+2}{n-2}}
\int_\Omega
(PU_i)^{\frac{n+2}{n-2}}PU_j\,\ud x\\
&\quad+
\frac{n+2}{n-2}
\alpha_j^{\frac4{n-2}}
\sum_{i\neq j}\alpha_i
\int_\Omega
(PU_j)^{\frac{n+2}{n-2}}PU_i\,\ud x+
o(\bm\Lambda).
\end{aligned}
\ee
Then the estimate \eqref{eq:sigma-power-PUj-zero} follows from \eqref{eq:PUi^p+1} and
\eqref{eq:PUiPUj^p}. 

It follows from \eqref{eq:u-power-reduction} and  \eqref{eq:sigma-power-PUj-zero} that
\be\label{eq:u-power-PUj-zero}
\begin{aligned}
&
\int_\Omega
u^{\frac{n+2}{n-2}}PU_j\,\ud x\\
&=
r_\infty^{-\frac{n+2}{4}}
\alpha_j^{\frac{n+2}{n-2}}C_2-
\frac{2n}{n-2}
r_\infty^{-\frac{n+2}{4}}
C_1\lambda_j^{2-n}H(a_j,a_j)\\
&\quad+
\frac{2n}{n-2}
r_\infty^{-\frac{n+2}{4}}
C_1
\sum_{i\neq j}
\left[
c_0\Lambda_{i,j}
-
(\lambda_i\lambda_j)^{-\frac{n-2}{2}}
H(a_i,a_j)
\right]\\
&\quad+
o(\bm\Lambda)
+
o\left(
\sum_{k=1}^{\ell}\lambda_k^{2-n}
\right)+
O\left(
\sum_{k=1}^{\ell}|\alpha_k-1|^2
\right)+O(\|w\|^2).
\end{aligned}
\ee

We next need an estimate of
\(\int_\Omega\Delta w\,PU_j\,\ud x\). 

Since
\[
-\Delta PU_j
=
U_j^{\frac{n+2}{n-2}},
\]
we have
\be\label{eq:Delta-w-decomposition-zero}
\begin{aligned}
\int_\Omega
\Delta w\,PU_j\,\ud x
={}&
-\int_\Omega
(PU_j)^{\frac{n+2}{n-2}}w\,\ud x+
\int_\Omega
\left[
(PU_j)^{\frac{n+2}{n-2}}
-
U_j^{\frac{n+2}{n-2}}
\right]w\,\ud x.
\end{aligned}
\ee

The projection error satisfies
\be\label{eq:projection-error-zero}
\begin{aligned}
&\Big|\int_{\Omega} (PU_j^{\frac{n+2}{n-2}}-U_j^{\frac{n+2}{n-2}})w \,\ud x\Big|\\
&\leq C\lambda_j^{-\frac{n-2}{2}}\int_{\Omega} U_j^{\frac{4}{n-2}}w \,\ud x \\
&\leq C\lambda_j^{-\frac{n-2}{2}}\Big(\int_{\Omega} U_j^{\frac{8n}{(n+2)(n-2)}} \,\ud x\Big)^{\frac{n+2}{2n}} \Big(\int_{\Omega} |w|^{\frac{2n}{n-2}} \,\ud x\Big)^{\frac{n-2}{2n}}\\
&\leq C\begin{cases}
\lambda_j^{-\frac{n+2}{2}}\|w\|  & n> 6,\\
\lambda_j^{-4}(\ln \lambda_j)^{\frac{2}{3}}\|w\|  & n= 6,\\
\lambda_j^{2-n}\|w\|  & n=3,4,5,\\
\end{cases}\\
&\leq C\|w\|^2+C\begin{cases}
\lambda_j^{-n-2}  & n> 6,\\
\lambda_j^{-8}(\ln \lambda_j)^{\frac{4}{3}}  & n= 6,\\
\lambda_j^{4-2n}  & n=3,4,5,\\
\end{cases}\\
&\leq C\|w\|^2+C \lambda_j^{1-n}.
\end{aligned}
\ee

It follows from \eqref{eq:quadratic-amplitude-orthogonality} 
\be\label{eq:mayer-v-1}
\int_\Omega
\left(
\sum_{i=1}^{\ell}\alpha_iPU_i
\right)^{\frac4{n-2}}
PU_jw\,\ud x
=
O(\|w\|^2).
\ee
On $A_j$ (defined in \eqref{eq:Aj}), since
$
\sum_{i\neq j}\alpha_iPU_i
\leq
\alpha_jPU_j,
$
we have
\[
\begin{aligned}
&
\left|
\left(
\sum_{i=1}^{\ell}\alpha_iPU_i
\right)^{\frac4{n-2}}
-
(\alpha_jPU_j)^{\frac4{n-2}}
\right|PU_j\leq
C
\sum_{i\neq j}
PU_i (PU_j)^{\frac4{n-2}}.
\end{aligned}
\]
On $A_j^c$, we have
\[
\begin{aligned}
&
\left|
\left(
\sum_{i=1}^{\ell}\alpha_iPU_i
\right)^{\frac4{n-2}}
-
(\alpha_jPU_j)^{\frac4{n-2}}
\right|PU_j\leq
C
\left(
\sum_{i\neq j}PU_i
\right)^{\frac4{n-2}}PU_j.
\end{aligned}
\]

Suppose first that $3\leq n\leq5$. Since
$\frac4{n-2}>1$, it follows from the fact that $\ell$ is
fixed that
\[
\left(
\sum_{i\neq j}PU_i
\right)^{\frac4{n-2}}PU_j
\leq
C\sum_{i\neq j}
(PU_i)^{\frac4{n-2}}PU_j.
\]
By Proposition B.2 of \cite{FG2020}, for every $i\neq j$,
\[
\begin{aligned}
&
\left(
\int_\Omega
\left(
(PU_j)^{\frac4{n-2}}PU_i
\right)^{\frac{2n}{n+2}}
\,\ud x
\right)^{\frac{n+2}{n}}+
\left(
\int_\Omega
\left(
(PU_i)^{\frac4{n-2}}PU_j
\right)^{\frac{2n}{n+2}}
\,\ud x
\right)^{\frac{n+2}{n}}\leq
C\Lambda_{i,j}^2
=
o(\Lambda_{i,j}).
\end{aligned}
\]

If $n=6$, then $\frac4{n-2}=1$. Proposition B.2 of
\cite{FG2020} gives
\[
\left(
\int_\Omega
(PU_iPU_j)^{\frac{2n}{n+2}}
\,\ud x
\right)^{\frac{n+2}{n}}
\leq
C\Lambda_{i,j}^2
\left(
\ln\Lambda_{i,j}^{-1}
\right)^{\frac43}
=
o(\Lambda_{i,j}).
\]

Suppose now that $n\geq7$. On $A_j$, we have
$PU_i\leq CPU_j$ for every $i\neq j$, and therefore
\[
\begin{aligned}
(PU_j)^{\frac4{n-2}}PU_i
&=
(PU_iPU_j)^{\frac{n+2}{2(n-2)}}
\left(
\frac{PU_i}{PU_j}
\right)^{\frac{n-6}{2(n-2)}}\leq
C
(PU_iPU_j)^{\frac{n+2}{2(n-2)}}.
\end{aligned}
\]
On $A_j^c$, divide the region into finitely many subregions
according to which $\alpha_iPU_i$, $i\neq j$, is maximal. On such
a subregion,
\[
\sum_{k\neq j}PU_k\leq CPU_i,
\qquad
PU_j\leq CPU_i,
\]
and hence
\[
\begin{aligned}
\left(
\sum_{k\neq j}PU_k
\right)^{\frac4{n-2}}PU_j
&\leq
C(PU_i)^{\frac4{n-2}}PU_j\\
&=
C
(PU_iPU_j)^{\frac{n+2}{2(n-2)}}
\left(
\frac{PU_j}{PU_i}
\right)^{\frac{n-6}{2(n-2)}}\\
&\leq
C
(PU_iPU_j)^{\frac{n+2}{2(n-2)}}.
\end{aligned}
\]
Again by Proposition B.2 of \cite{FG2020},
\[
\begin{aligned}
&
\left(
\int_\Omega
(PU_iPU_j)^{\frac{n}{n-2}}
\,\ud x
\right)^{\frac{n+2}{n}}\leq
C
\Lambda_{i,j}^{\frac{n+2}{n-2}}
\left(
\ln\Lambda_{i,j}^{-1}
\right)^{\frac{n+2}{n}}
=
o(\Lambda_{i,j}).
\end{aligned}
\]
Hence,
\[
\begin{aligned}
&
\left(
\int_\Omega
\left|
\left[
\left(
\sum_{i=1}^{\ell}\alpha_iPU_i
\right)^{\frac4{n-2}}
-
(\alpha_jPU_j)^{\frac4{n-2}}
\right]PU_j
\right|^{\frac{2n}{n+2}}
\,\ud x
\right)^{\frac{n+2}{n}}=
o\left(
\sum_{i\neq j}\Lambda_{i,j}
\right).
\end{aligned}
\]
Therefore, by H\"older's, Sobolev's, and Young's inequalities,
for every fixed $\varepsilon>0$,
\[
\begin{aligned}
&
\left|
\int_\Omega
\left[
\left(
\sum_{i=1}^{\ell}\alpha_iPU_i
\right)^{\frac4{n-2}}
-
(\alpha_jPU_j)^{\frac4{n-2}}
\right]PU_jw\,\ud x
\right|\\
&\quad\leq
C
\left(
\int_\Omega
\left|
\left[
\left(
\sum_{i=1}^{\ell}\alpha_iPU_i
\right)^{\frac4{n-2}}
-
(\alpha_jPU_j)^{\frac4{n-2}}
\right]PU_j
\right|^{\frac{2n}{n+2}}
\,\ud x
\right)^{\frac{n+2}{2n}}
\|w\|\\
&\quad\leq
C\|w\|^2
+
o\left(
\sum_{i\neq j}\Lambda_{i,j}
\right).
\end{aligned}
\]
Consequently,
\[
\begin{aligned}
&
\int_\Omega
\left(
\sum_{i=1}^{\ell}\alpha_iPU_i
\right)^{\frac4{n-2}}
PU_jw\,\ud x=
\int_\Omega
(\alpha_jPU_j)^{\frac4{n-2}}PU_jw\,\ud x
+
o\left(
\sum_{i\neq j}\Lambda_{i,j}
\right)
+
O(\|w\|^2).
\end{aligned}
\]
Combining this with \eqref{eq:mayer-v-1}, we obtain
\be\label{eq:localized-PUj-w}
\int_\Omega
(PU_j)^{\frac{n+2}{n-2}}w\,\ud x
=
o\left(
\sum_{i\neq j}\Lambda_{i,j}
\right)
+
O\left(
\|w\|^{2}
\right).
\ee
It follows from
\eqref{eq:Delta-w-decomposition-zero},
\eqref{eq:projection-error-zero}, and
\eqref{eq:localized-PUj-w} that
\be\label{eq:Delta-w-zero-sharp}
\begin{aligned}
\int_\Omega
\Delta w\,PU_j\,\ud x
={}&
o\left(
\sum_{i\neq j}\Lambda_{i,j}
\right)+
O\left(
\|w\|^{2}
\right)
+
o\left(
\sum_{k=1}^{\ell}\lambda_k^{2-n}
\right).
\end{aligned}
\ee

Substituting
\eqref{eq:PUiUj^p}, \eqref{eq:PUiUi^p},
\eqref{eq:amplitude-residual},
\eqref{eq:u-power-PUj-zero}, and
\eqref{eq:Delta-w-zero-sharp}
into \eqref{eq:amplitude-equation-zero}, we obtain
\[
\begin{aligned}
C_2
\left(
\alpha_j^{\frac{n+2}{n-2}}-\alpha_j
\right)
={}&
\frac{n+2}{n-2}
C_1\lambda_j^{2-n}H(a_j,a_j)\\
&-
\frac{n+2}{n-2}
C_1
\sum_{i\neq j}
\left[
c_0\Lambda_{i,j}
-
(\lambda_i\lambda_j)^{-\frac{n-2}{2}}
H(a_i,a_j)
\right]\\
&+
o(\bm\Lambda)
+
o\left(
\sum_{k=1}^{\ell}\lambda_k^{2-n}
\right)+
O\left(
\|w\|^{2}
\right)+
O\left(
\sum_{k=1}^{\ell}|\alpha_k-1|^2
\right)\\
&+
O\left(
\left(
\int_\Omega
|\mathcal R_u-r_\infty|^{\frac{2n}{n+2}}
u^{\frac{2n}{n-2}}\,\ud x
\right)^{\frac{n+2}{2n}}
\right).
\end{aligned}
\]
Hence,
\be\label{eq:amplitude-preliminary-zero}
\begin{aligned}
\sum_{k=1}^{\ell}|\alpha_k-1|
\leq{}&
C
\left(
\int_\Omega
|\mathcal R_u-r_\infty|^{\frac{2n}{n+2}}
u^{\frac{2n}{n-2}}\,\ud x
\right)^{\frac{n+2}{2n}}+
C\sum_{k=1}^{\ell}\lambda_k^{2-n}
+
C\bm\Lambda
+
C\|w\|^{2}.
\end{aligned}
\ee

\medskip

\noindent
\textbf{Step 3. Preliminary estimate of \(w\) when
\(u_\infty\equiv0\).}

\medskip

The algebraic inequality
\be\label{eq:nonlinear-Taylor}
\begin{aligned}
&
\left|
a^{\frac{n+2}{n-2}}
-b^{\frac{n+2}{n-2}}
-\frac{n+2}{n-2}
b^{\frac4{n-2}}(a-b)
\right|\leq
C
b^{\frac4{n-2}-\delta(n)}
|a-b|^{1+\delta(n)}
+
C|a-b|^{\frac{n+2}{n-2}}
\end{aligned}
\ee
gives
\[
\begin{aligned}
-\Delta w
={}&
(\mathcal R_u-r_\infty)
u^{\frac{n+2}{n-2}}+
\frac{n+2}{n-2}r_\infty
\sigma^{\frac4{n-2}}w+
O\left(
\sigma^{\frac4{n-2}-\delta(n)}
|w|^{1+\delta(n)}
+
|w|^{\frac{n+2}{n-2}}
\right)\\
&+
r_\infty^{-\frac{n-2}{4}}
\left[
\left(
\sum_{k=1}^{\ell}\alpha_kPU_k
\right)^{\frac{n+2}{n-2}}
-
\sum_{k=1}^{\ell}
\alpha_k(PU_k)^{\frac{n+2}{n-2}}
\right]\\
&+
r_\infty^{-\frac{n-2}{4}}
\sum_{k=1}^{\ell}\alpha_k
\left[
(PU_k)^{\frac{n+2}{n-2}}
-
U_k^{\frac{n+2}{n-2}}
\right].
\end{aligned}
\]
Multiplying by \(w\), integrating by parts, and applying
Proposition \ref{prop:Coercive-1}, we find
\[
\begin{aligned}
c\|w\|^2
\leq{}&
\left|
\int_\Omega
(\mathcal R_u-r_\infty)
u^{\frac{n+2}{n-2}}w\,\ud x
\right|+
C\|w\|^{2+\delta(n)}\\
&+
C\left|
\int_\Omega
\left[
\left(
\sum_{k=1}^{\ell}\alpha_kPU_k
\right)^{\frac{n+2}{n-2}}
-
\sum_{k=1}^{\ell}
\alpha_k(PU_k)^{\frac{n+2}{n-2}}
\right]w\,\ud x
\right|\\
&+
C\sum_{k=1}^{\ell}
\left|
\int_\Omega
\left[
(PU_k)^{\frac{n+2}{n-2}}
-
U_k^{\frac{n+2}{n-2}}
\right]w\,\ud x
\right|.
\end{aligned}
\]

Using H\"older's inequality, we have
\be\label{eq:w-residual-estimate}
\begin{aligned}
&
\left|
\int_\Omega
(\mathcal R_u-r_\infty)
u^{\frac{n+2}{n-2}}w\,\ud x
\right|\leq
\frac{c}{8}\|w\|^2
+
C
\left(
\int_\Omega
|\mathcal R_u-r_\infty|^{\frac{2n}{n+2}}
u^{\frac{2n}{n-2}}\,\ud x
\right)^{\frac{n+2}{n}}.
\end{aligned}
\ee
We now estimate the multi-bubble term. We first decompose
\[
\begin{aligned}
&
\left(
\sum_{k=1}^{\ell}\alpha_kPU_k
\right)^{\frac{n+2}{n-2}}
-
\sum_{k=1}^{\ell}
\alpha_k(PU_k)^{\frac{n+2}{n-2}}\\
={}&
\sum_{k=1}^{\ell}
\left(
\alpha_k^{\frac{n+2}{n-2}}-\alpha_k
\right)
(PU_k)^{\frac{n+2}{n-2}}+
\left[
\left(
\sum_{k=1}^{\ell}\alpha_kPU_k
\right)^{\frac{n+2}{n-2}}
-
\sum_{k=1}^{\ell}
(\alpha_kPU_k)^{\frac{n+2}{n-2}}
\right].
\end{aligned}
\]

Since $\alpha_k$ is close to $1$,
\[
\left|
\alpha_k^{\frac{n+2}{n-2}}-\alpha_k
\right|
\leq
C|\alpha_k-1|.
\]
Moreover, by H\"older's and Sobolev's inequalities,
\[
\begin{aligned}
\left|
\int_\Omega
(PU_k)^{\frac{n+2}{n-2}}w\,\ud x
\right|
&\leq
\left(
\int_\Omega
(PU_k)^{\frac{2n}{n-2}}\,\ud x
\right)^{\frac{n+2}{2n}}
\|w\|_{L^{\frac{2n}{n-2}}(\Omega)}\leq C\|w\|.
\end{aligned}
\]
Consequently,
\be\label{eq:multi-bubble-amplitude-part}
\begin{aligned}
&
\sum_{k=1}^{\ell}
\left|
\alpha_k^{\frac{n+2}{n-2}}-\alpha_k
\right|
\left|
\int_\Omega
(PU_k)^{\frac{n+2}{n-2}}w\,\ud x
\right|\leq
\frac{c}{16}\|w\|^2
+
C\sum_{k=1}^{\ell}|\alpha_k-1|^2.
\end{aligned}
\ee

For nonnegative $b_1,\ldots,b_\ell$, one has
\[
\left(
\sum_{k=1}^{\ell}b_k
\right)^p
-
\sum_{k=1}^{\ell}b_k^p
\leq
\begin{cases}
\displaystyle
C\sum_{i\neq j}(b_ib_j)^{p/2},
&1<p\leq2,\\[3mm]
\displaystyle
C\sum_{i\neq j}b_i^{p-1}b_j,
&p>2.
\end{cases}
\]
Applying this inequality with
$
p=\frac{n+2}{n-2},
b_k=\alpha_kPU_k,
$
and using that the $\alpha_k$ are uniformly bounded above and below,
we obtain
\[
\begin{aligned}
&
\left|
\left(
\sum_{k=1}^{\ell}\alpha_kPU_k
\right)^{\frac{n+2}{n-2}}
-
\sum_{k=1}^{\ell}
(\alpha_kPU_k)^{\frac{n+2}{n-2}}
\right|\leq
\begin{cases}
\displaystyle
C\sum_{i\neq j}
(PU_iPU_j)^{\frac{n+2}{2(n-2)}},
&n\geq6,\\[3mm]
\displaystyle
C\sum_{i\neq j}
(PU_i)^{\frac4{n-2}}PU_j,
&n=3,4,5.
\end{cases}
\end{aligned}
\]

Suppose first that $n\geq6$. By H\"older's and Sobolev's
inequalities,
\[
\begin{aligned}
&
\int_\Omega
(PU_iPU_j)^{\frac{n+2}{2(n-2)}}|w|\,\ud x\leq
C
\left(
\int_\Omega
(PU_iPU_j)^{\frac{n}{n-2}}\,\ud x
\right)^{\frac{n+2}{2n}}
\|w\|.
\end{aligned}
\]
By \cite[Proposition B.2]{FG2020},
\[
\left(
\int_\Omega
(PU_iPU_j)^{\frac{n}{n-2}}\,\ud x
\right)^{\frac{n+2}{n}}
\leq
C
\Lambda_{i,j}^{\frac{n+2}{n-2}}
\left(
\ln\Lambda_{i,j}^{-1}
\right)^{\frac{n+2}{n}}.
\]
Therefore, by Young's inequality,
\be\label{eq:multi-bubble-interaction-high}
\begin{aligned}
&
C\sum_{i\neq j}
\int_\Omega
(PU_iPU_j)^{\frac{n+2}{2(n-2)}}|w|\,\ud x\leq
\frac{c}{16}\|w\|^2+
C\sum_{i\neq j}
\Lambda_{i,j}^{\frac{n+2}{n-2}}
\left(
\ln\Lambda_{i,j}^{-1}
\right)^{\frac{n+2}{n}}.
\end{aligned}
\ee

Suppose now that $n=3,4,5$. Again by H\"older's and Sobolev's
inequalities,
\[
\begin{aligned}
&
\int_\Omega
(PU_i)^{\frac4{n-2}}PU_j|w|\,\ud x\leq
C
\left(
\int_\Omega
(PU_i)^{
\frac{8n}{(n+2)(n-2)}
}
(PU_j)^{\frac{2n}{n+2}}
\,\ud x
\right)^{\frac{n+2}{2n}}
\|w\|.
\end{aligned}
\]
By \cite[Proposition B.2]{FG2020},
\[
\left(
\int_\Omega
(PU_i)^{
\frac{8n}{(n+2)(n-2)}
}
(PU_j)^{\frac{2n}{n+2}}
\,\ud x
\right)^{\frac{n+2}{n}}
\leq
C\Lambda_{i,j}^2.
\]
It follows from Young's inequality that
\be\label{eq:multi-bubble-interaction-low}
\begin{aligned}
&
C\sum_{i\neq j}
\int_\Omega
(PU_i)^{\frac4{n-2}}PU_j|w|\,\ud x\leq
\frac{c}{16}\|w\|^2
+
C\sum_{i\neq j}\Lambda_{i,j}^2.
\end{aligned}
\ee

Combining \eqref{eq:multi-bubble-amplitude-part},
\eqref{eq:multi-bubble-interaction-high}, and
\eqref{eq:multi-bubble-interaction-low}, we obtain

\be\label{eq:multi-bubble-w-estimate}
\begin{aligned}
&
\left|
\int_\Omega
\left[
\left(
\sum_{k=1}^{\ell}\alpha_kPU_k
\right)^{\frac{n+2}{n-2}}
-
\sum_{k=1}^{\ell}
\alpha_k(PU_k)^{\frac{n+2}{n-2}}
\right]w\,\ud x
\right|\\
&\quad\leq
\frac{c}{8}\|w\|^2
+
C\sum_{k=1}^{\ell}|\alpha_k-1|^2+
\begin{cases}
\displaystyle
C\sum_{i\neq j}
\Lambda_{i,j}^{\frac{n+2}{n-2}}
\left(
\ln\Lambda_{i,j}^{-1}
\right)^{\frac{n+2}{n}},
& n\geq6,\\[3mm]
\displaystyle
C\sum_{i\neq j}\Lambda_{i,j}^2,
& n=3,4,5.
\end{cases}
\end{aligned}
\ee
Finally, \eqref{eq:projection-error-zero} gives
\be\label{eq:projection-error-w-zero}
\begin{aligned}
&
\sum_{k=1}^{\ell}
\left|
\int_\Omega
\left[
(PU_k)^{\frac{n+2}{n-2}}
-
U_k^{\frac{n+2}{n-2}}
\right]w\,\ud x
\right|\leq
\frac{c}{8}\|w\|^2
+
\begin{cases}
\displaystyle
C\sum_{k=1}^{\ell}\lambda_k^{-n-2},
& n>6,\\[2mm]
\displaystyle
C\sum_{k=1}^{\ell}
\lambda_k^{-8}(\ln\lambda_k)^{\frac43},
& n=6,\\[2mm]
\displaystyle
C\sum_{k=1}^{\ell}\lambda_k^{4-2n},
& n=3,4,5.
\end{cases}
\end{aligned}
\ee

After absorbing \(C\|w\|^{2+\delta(n)}\), we conclude that
\be\label{eq:w-preliminary-zero}
\begin{aligned}
\|w\|^2
\leq{}&
C
\left(
\int_\Omega
|\mathcal R_u-r_\infty|^{\frac{2n}{n+2}}
u^{\frac{2n}{n-2}}\,\ud x
\right)^{\frac{n+2}{n}}+
C\sum_{k=1}^{\ell}|\alpha_k-1|^2\\
&+
\begin{cases}
\displaystyle
C\sum_{k=1}^{\ell}\lambda_k^{-n-2},
& n>6,\\[2mm]
\displaystyle
C\sum_{k=1}^{\ell}
\lambda_k^{-8}(\ln\lambda_k)^{\frac43},
& n=6,\\[2mm]
\displaystyle
C\sum_{k=1}^{\ell}\lambda_k^{4-2n},
& n=3,4,5,
\end{cases}+
\begin{cases}
\displaystyle
C\sum_{i\neq j}
\Lambda_{i,j}^{\frac{n+2}{n-2}}
\left(
\ln\Lambda_{i,j}^{-1}
\right)^{\frac{n+2}{n}},
& n\geq6,\\[3mm]
\displaystyle
C\sum_{i\neq j}\Lambda_{i,j}^2,
& n=3,4,5.
\end{cases}
\end{aligned}
\ee
This proves \eqref{thm:w-0}, and together with \eqref{eq:amplitude-preliminary-zero}, proves \eqref{thm:alpha-1-0}.

\medskip

\noindent
\textbf{Step 4. Preliminary amplitude estimate when
$u_\infty>0$.}

\medskip

Now
\[
\sigma
=
\xi_{\bm z}
+
r_\infty^{-\frac{n-2}{4}}
\sum_{i=1}^{\ell}\alpha_iPU_i.
\]
As before,
\be\label{eq:amplitude-equation-positive}
\begin{aligned}
0
={}&
\int_\Omega
\left(
\Delta u+\mathcal R_u
u^{\frac{n+2}{n-2}}
\right)PU_j\,\ud x\\
={}&
\int_\Omega
\left(
\Delta\xi_{\bm z}
+
r_\infty u^{\frac{n+2}{n-2}}
\right)PU_j\,\ud x+
r_\infty^{-\frac{n-2}{4}}
\sum_{i=1}^{\ell}\alpha_i
\int_\Omega
\Delta PU_i\,PU_j\,\ud x\\
&+
\int_\Omega
\Delta w\,PU_j\,\ud x+
\int_\Omega
(\mathcal R_u-r_\infty)
u^{\frac{n+2}{n-2}}PU_j\,\ud x.
\end{aligned}
\ee

We are going to show first that
\be\label{eq:sigma-power-PUj-positive}
\begin{aligned}
\int_\Omega
\sigma^{\frac{n+2}{n-2}}PU_j\,\ud x
&=
r_\infty^{-\frac{n+2}{4}}
\alpha_j^{\frac{n+2}{n-2}}C_2+
\frac{2n}{n-2}
r_\infty^{-\frac{n+2}{4}}
C_1c_0
\sum_{i\neq j}\Lambda_{i,j}\\
&+
o(\bm\Lambda)
+
O\left(
\sum_{k=1}^{\ell}
\lambda_k^{-\frac{n-2}{2}}
\right)
+
O\left(
\sum_{k=1}^{\ell}|\alpha_k-1|^2
\right).
\end{aligned}
\ee

For $a,b\geq0$, one has
\[
0\leq
(a+b)^{\frac{n+2}{n-2}}
-b^{\frac{n+2}{n-2}}
\leq
C\left(
a^{\frac{n+2}{n-2}}
+
ab^{\frac4{n-2}}
\right).
\]
Applying this with
\[
a=\xi_{\bm z},
\qquad
b=
r_\infty^{-\frac{n-2}{4}}
\sum_{i=1}^{\ell}\alpha_iPU_i,
\]
we obtain
\[
\begin{aligned}
&
\left|
\int_\Omega
\sigma^{\frac{n+2}{n-2}}PU_j\,\ud x
-
r_\infty^{-\frac{n+2}{4}}
\int_\Omega
\left(
\sum_{i=1}^{\ell}\alpha_iPU_i
\right)^{\frac{n+2}{n-2}}
PU_j\,\ud x
\right|\\
&\quad\leq
C\int_\Omega
\xi_{\bm z}^{\frac{n+2}{n-2}}PU_j\,\ud x+
C\int_\Omega
\xi_{\bm z}
\left(
\sum_{i=1}^{\ell}\alpha_iPU_i
\right)^{\frac4{n-2}}
PU_j\,\ud x.
\end{aligned}
\]
Since $\xi_{\bm z}$ is uniformly bounded,
\[
\int_\Omega
\xi_{\bm z}^{\frac{n+2}{n-2}}PU_j\,\ud x
\leq
C\lambda_j^{-\frac{n-2}{2}}.
\]
Moreover,
\[
\left(
\sum_{i=1}^{\ell}\alpha_iPU_i
\right)^{\frac4{n-2}}
\leq
C\sum_{i=1}^{\ell}
(PU_i)^{\frac4{n-2}},
\]
and Young's inequality gives
\[
(PU_i)^{\frac4{n-2}}PU_j
\leq
C(PU_i)^{\frac{n+2}{n-2}}
+
C(PU_j)^{\frac{n+2}{n-2}}.
\]
Since
\[
\int_\Omega
(PU_i)^{\frac{n+2}{n-2}}\,\ud x
\leq
C\lambda_i^{-\frac{n-2}{2}},
\]
we conclude that
\[
\begin{aligned}
&
\int_\Omega
\sigma^{\frac{n+2}{n-2}}PU_j\,\ud x=
r_\infty^{-\frac{n+2}{4}}
\int_\Omega
\left(
\sum_{i=1}^{\ell}\alpha_iPU_i
\right)^{\frac{n+2}{n-2}}
PU_j\,\ud x+
O\left(
\sum_{k=1}^{\ell}
\lambda_k^{-\frac{n-2}{2}}
\right).
\end{aligned}
\]
Then, \eqref{eq:sigma-power-PUj-positive} follows from \eqref{eq:sigma-power-PUj-zero}.

Since
\[
\Pi\left(
\Delta\xi_{\bm z}
+
r_\infty\xi_{\bm z}^{\frac{n+2}{n-2}}
\right)=0,
\]
\eqref{eq:Pif}  and \eqref{eq:xi_zphi_l} imply
\[
\begin{aligned}
&
\int_\Omega
\left(
\Delta\xi_{\bm z}
+
r_\infty\xi_{\bm z}^{\frac{n+2}{n-2}}
\right)PU_j\,\ud x\\
&=
O\left(
\left(
\int_\Omega
|\mathcal R_u-r_\infty|^{\frac{2n}{n+2}}
u^{\frac{2n}{n-2}}\,\ud x
\right)^{\frac{n+2}{2n}}
\right)+
O\left(
\sum_{k=1}^{\ell}
\lambda_k^{-\frac{n-2}{2}}
\right).
\end{aligned}
\]
Consequently,
\be\label{eq:regular-term-positive}
\begin{aligned}
&
\int_\Omega
\left(
\Delta\xi_{\bm z}
+
r_\infty u^{\frac{n+2}{n-2}}
\right)PU_j\,\ud x\\
&=
r_\infty
\int_\Omega
\left(
u^{\frac{n+2}{n-2}}
-
\xi_{\bm z}^{\frac{n+2}{n-2}}
\right)PU_j\,\ud x+
O\left(
\left(
\int_\Omega
|\mathcal R_u-r_\infty|^{\frac{2n}{n+2}}
u^{\frac{2n}{n-2}}\,\ud x
\right)^{\frac{n+2}{2n}}
\right)\\
&\quad+
O\left(
\sum_{k=1}^{\ell}
\lambda_k^{-\frac{n-2}{2}}
\right).
\end{aligned}
\ee
Using \eqref{eq:u-power-reduction} and \eqref{eq:sigma-power-PUj-positive}, we obtain
\[
\begin{aligned}
\int_\Omega
u^{\frac{n+2}{n-2}}PU_j\,\ud x
&=
r_\infty^{-\frac{n+2}{4}}
\alpha_j^{\frac{n+2}{n-2}}C_2+
\frac{2n}{n-2}
r_\infty^{-\frac{n+2}{4}}
C_1c_0
\sum_{i\neq j}\Lambda_{i,j}\\
&\quad+
o(\bm\Lambda)
+
O\left(
\sum_{k=1}^{\ell}
\lambda_k^{-\frac{n-2}{2}}
\right)
+
O\left(
\sum_{k=1}^{\ell}|\alpha_k-1|^2
\right)+
O(\|w\|^2).
\end{aligned}
\]
Since \(\xi_{\bm z}\) is uniformly bounded,
\[
\int_\Omega
\xi_{\bm z}^{\frac{n+2}{n-2}}PU_j\,\ud x
\leq
C\lambda_j^{-\frac{n-2}{2}}.
\]
It follows that
\be\label{eq:regular-difference-positive}
\begin{aligned}
&
\int_\Omega
\left(
\Delta\xi_{\bm z}
+
r_\infty u^{\frac{n+2}{n-2}}
\right)PU_j\,\ud x\\
&=
r_\infty^{-\frac{n-2}{4}}
\alpha_j^{\frac{n+2}{n-2}}C_2+
\frac{2n}{n-2}
r_\infty^{-\frac{n-2}{4}}
C_1c_0
\sum_{i\neq j}\Lambda_{i,j}+
O\left(
\left(
\int_\Omega
|\mathcal R_u-r_\infty|^{\frac{2n}{n+2}}
u^{\frac{2n}{n-2}}\,\ud x
\right)^{\frac{n+2}{2n}}
\right)\\
&\quad+
o(\bm\Lambda)
+
O\left(
\sum_{k=1}^{\ell}
\lambda_k^{-\frac{n-2}{2}}
\right)
+
O\left(
\sum_{k=1}^{\ell}|\alpha_k-1|^2
\right)+
O(\|w\|^2).
\end{aligned}
\ee

We next improve the estimate of
\[
\int_\Omega \Delta w\,PU_j\,\ud x
\]
in the case $u_\infty>0$. We first prove the analogue of
\eqref{eq:Delta-w-zero-sharp}.

Let
\[
D
=
\left\{
x\in\Omega:
\xi_{\bm z}
\geq
r_\infty^{-\frac{n-2}{4}}
\sum_{i=1}^{\ell}\alpha_iPU_i
\right\},
\]
and
\[
A_j
=
\left\{
x\in\Omega:
\alpha_jPU_j
\geq
\sum_{i\neq j}\alpha_iPU_i
\right\}.
\]
After decreasing $\va_0$ if necessary, we may assume that
\[
\frac12\leq\alpha_i\leq2,
\qquad
1\leq i\leq\ell.
\]

We claim that
\be\label{eq:positive-mayer-v-2}
\begin{aligned}
\int_\Omega
\sigma^{\frac4{n-2}}PU_jw\,\ud x
={}&
r_\infty^{-1}
\alpha_j^{\frac4{n-2}}
\int_\Omega
(PU_j)^{\frac{n+2}{n-2}}w\,\ud x\\
&+
o\left(
\lambda_j^{-\frac{n-2}{2}}
\right)
+
o\left(
\sum_{i\neq j}\Lambda_{i,j}
\right)
+
O(\|w\|^2).
\end{aligned}
\ee

We begin with pointwise estimates. On $D$, we have
\[
r_\infty^{-\frac{n-2}{4}}
\sum_{i=1}^{\ell}\alpha_iPU_i
\leq
\xi_{\bm z}.
\]
Consequently,
\[
\sigma\leq2\xi_{\bm z},
\qquad
PU_j\leq C\xi_{\bm z},
\]
and therefore
\be\label{eq:positive-comparison-D}
\left|
\sigma^{\frac4{n-2}}
-
r_\infty^{-1}
\alpha_j^{\frac4{n-2}}
(PU_j)^{\frac4{n-2}}
\right|PU_j
\leq
C\xi_{\bm z}^{\frac4{n-2}}PU_j
\quad\mbox{on }D.
\ee

On $D^c\cap A_j$, we have
\[
\xi_{\bm z}
+
r_\infty^{-\frac{n-2}{4}}
\sum_{i\neq j}\alpha_iPU_i
\leq
CPU_j.
\]
For every $s>0$, if $a>0$ and $0\leq b\leq Ca$, then
\[
|(a+b)^s-a^s|
\leq
C a^{s-1}b.
\]
Applying this with
\[
s=\frac4{n-2},
\qquad
a=r_\infty^{-\frac{n-2}{4}}\alpha_jPU_j,
\]
and
\[
b=
\xi_{\bm z}
+
r_\infty^{-\frac{n-2}{4}}
\sum_{i\neq j}\alpha_iPU_i,
\]
we obtain
\[
\begin{aligned}
&
\left|
\sigma^{\frac4{n-2}}
-
r_\infty^{-1}
\alpha_j^{\frac4{n-2}}
(PU_j)^{\frac4{n-2}}
\right|PU_j\leq
C(PU_j)^{\frac4{n-2}}\xi_{\bm z}
+
C\sum_{i\neq j}
(PU_j)^{\frac4{n-2}}PU_i
\end{aligned}
\]
on $D^c\cap A_j$.

Finally, on $D^c\cap A_j^c$, we have
\[
\xi_{\bm z}
+
r_\infty^{-\frac{n-2}{4}}
\sum_{i=1}^{\ell}\alpha_iPU_i
\leq
C\sum_{i\neq j}PU_i.
\]
Hence
\be\label{eq:positive-comparison-DcAjc}
\begin{aligned}
&
\left|
\sigma^{\frac4{n-2}}
-
r_\infty^{-1}
\alpha_j^{\frac4{n-2}}
(PU_j)^{\frac4{n-2}}
\right|PU_j\leq
C
\left(
\sum_{i\neq j}PU_i
\right)^{\frac4{n-2}}PU_j
\end{aligned}
\ee
on $D^c\cap A_j^c$.

We first estimate the terms involving $\xi_{\bm z}$.

Suppose that $3\leq n\leq5$. Since $\xi_{\bm z}$ is uniformly
bounded, \eqref{eq:positive-comparison-D} gives
\[
\xi_{\bm z}^{\frac4{n-2}}PU_j
\leq
CPU_j
\qquad\mbox{on }D.
\]
Moreover, on $D$ we have $PU_j\leq C\xi_{\bm z}\leq M$ for some
constant $M>0$ independent of the parameters. By
\eqref{eq:PU-lower-interior}, we may choose $c>0$ sufficiently
small and then $\lambda_j$ sufficiently large so that
\[
D\cap
B_{c\lambda_j^{-1/2}}(a_j)
=
\varnothing.
\]
Indeed, for
$x\in B_{c\lambda_j^{-1/2}}(a_j)$,
\[
\begin{aligned}
PU_j(x)
\geq
C^{-1}U_j(x)\geq
C^{-1}c_0
\left(
\frac{\lambda_j}
{1+c^2\lambda_j}
\right)^{\frac{n-2}{2}}\geq
C^{-1}c_0
2^{-\frac{n-2}{2}}
c^{-(n-2)},
\end{aligned}
\]
which is larger than $M$ if $c$ is chosen sufficiently small.

It follows from the change of variables
$y=\lambda_j(x-a_j)$ that
\[
\begin{aligned}
&
\left(
\int_D
\left(
\xi_{\bm z}^{\frac4{n-2}}PU_j
\right)^{\frac{2n}{n+2}}
\,\ud x
\right)^{\frac{n+2}{n}}\leq
C
\left(
\int_{\{|x-a_j|\geq
c\lambda_j^{-1/2}\}\cap\Omega}
U_j^{\frac{2n}{n+2}}
\,\ud x
\right)^{\frac{n+2}{n}}\leq
C\lambda_j^{-(n-2)}.
\end{aligned}
\]
Similarly,
\[
\begin{aligned}
&
\left(
\int_{D^c\cap A_j}
\left(
(PU_j)^{\frac4{n-2}}\xi_{\bm z}
\right)^{\frac{2n}{n+2}}
\,\ud x
\right)^{\frac{n+2}{n}}\leq
C
\left(
\int_\Omega
U_j^{\frac{8n}{(n+2)(n-2)}}
\,\ud x
\right)^{\frac{n+2}{n}}\leq
C\lambda_j^{-(n-2)}.
\end{aligned}
\]

Suppose now that $n\geq6$. Since
$\frac4{n-2}\leq1$, on $D$ we have
\[
\begin{aligned}
\xi_{\bm z}^{\frac4{n-2}}PU_j
&=
(\xi_{\bm z}PU_j)^{\frac{n+2}{2(n-2)}}
\left(
\frac{PU_j}{\xi_{\bm z}}
\right)^{\frac{n-6}{2(n-2)}}\leq
C
(\xi_{\bm z}PU_j)^{\frac{n+2}{2(n-2)}}.
\end{aligned}
\]
Similarly, on $D^c\cap A_j$,
\[
\begin{aligned}
(PU_j)^{\frac4{n-2}}\xi_{\bm z}
&=
(\xi_{\bm z}PU_j)^{\frac{n+2}{2(n-2)}}
\left(
\frac{\xi_{\bm z}}{PU_j}
\right)^{\frac{n-6}{2(n-2)}}\leq
C
(\xi_{\bm z}PU_j)^{\frac{n+2}{2(n-2)}}.
\end{aligned}
\]
Since $\xi_{\bm z}$ is uniformly bounded and $PU_j\leq U_j$,
\[
\begin{aligned}
&
\left(
\int_D
\left(
\xi_{\bm z}^{\frac4{n-2}}PU_j
\right)^{\frac{2n}{n+2}}
\,\ud x
\right)^{\frac{n+2}{n}}+
\left(
\int_{D^c\cap A_j}
\left(
(PU_j)^{\frac4{n-2}}\xi_{\bm z}
\right)^{\frac{2n}{n+2}}
\,\ud x
\right)^{\frac{n+2}{n}}\\
&\quad\leq
C
\left(
\int_\Omega
U_j^{\frac n{n-2}}\,\ud x
\right)^{\frac{n+2}{n}}\\
&\quad\leq
C
\lambda_j^{-\frac{n+2}{2}}
(\ln\lambda_j)^{\frac{n+2}{n}}.
\end{aligned}
\]
In particular, the quantities in the above three estimates are
$
o\left(
\lambda_j^{-\frac{n-2}{2}}
\right).
$

We next estimate the bubble-interaction terms. When
$3\leq n\leq5$, Proposition B.2 of \cite{FG2020} gives, for
$i\neq j$,
\[
\begin{aligned}
&
\left(
\int_\Omega
\left(
(PU_j)^{\frac4{n-2}}PU_i
\right)^{\frac{2n}{n+2}}
\,\ud x
\right)^{\frac{n+2}{n}}+
\left(
\int_\Omega
\left(
(PU_i)^{\frac4{n-2}}PU_j
\right)^{\frac{2n}{n+2}}
\,\ud x
\right)^{\frac{n+2}{n}}\leq
C\Lambda_{i,j}^2
=
o(\Lambda_{i,j}).
\end{aligned}
\]

Suppose that $n\geq6$. On $A_j$, one has
$PU_i\leq CPU_j$ for $i\neq j$, and therefore
\[
(PU_j)^{\frac4{n-2}}PU_i
\leq
C
(PU_iPU_j)^{\frac{n+2}{2(n-2)}}.
\]
On $A_j^c$, divide the region into finitely many subregions
according to which $\alpha_iPU_i$, $i\neq j$, is maximal. On each
such subregion,
\[
\sum_{k\neq j}PU_k\leq CPU_i,
\qquad
PU_j\leq CPU_i,
\]
and hence
\[
\left(
\sum_{k\neq j}PU_k
\right)^{\frac4{n-2}}PU_j
\leq
C
(PU_iPU_j)^{\frac{n+2}{2(n-2)}}.
\]
By Proposition B.2 of \cite{FG2020},
\[
\begin{aligned}
&
\left(
\int_\Omega
(PU_iPU_j)^{\frac n{n-2}}
\,\ud x
\right)^{\frac{n+2}{n}}\leq
C
\Lambda_{i,j}^{\frac{n+2}{n-2}}
\left(
\ln\Lambda_{i,j}^{-1}
\right)^{\frac{n+2}{n}}
=
o(\Lambda_{i,j}).
\end{aligned}
\]

Since $\ell$ is fixed, it follows from
\eqref{eq:positive-comparison-D}--
\eqref{eq:positive-comparison-DcAjc} and (1)--(8) that
\[
\begin{aligned}
&
\left(
\int_\Omega
\left|
\sigma^{\frac4{n-2}}PU_j
-
r_\infty^{-1}
\alpha_j^{\frac4{n-2}}
(PU_j)^{\frac{n+2}{n-2}}
\right|^{\frac{2n}{n+2}}
\,\ud x
\right)^{\frac{n+2}{n}}\leq
o\left(
\lambda_j^{-\frac{n-2}{2}}
\right)
+
o\left(
\sum_{i\neq j}\Lambda_{i,j}
\right).
\end{aligned}
\]

By H\"older's, Sobolev's, and Young's inequalities, for every
fixed $\varepsilon>0$,
\[
\begin{aligned}
&
\left|
\int_\Omega
\left[
\sigma^{\frac4{n-2}}PU_j
-
r_\infty^{-1}
\alpha_j^{\frac4{n-2}}
(PU_j)^{\frac{n+2}{n-2}}
\right]w\,\ud x
\right|\leq
\varepsilon\|w\|^2
+
o\left(
\lambda_j^{-\frac{n-2}{2}}
\right)
+
o\left(
\sum_{i\neq j}\Lambda_{i,j}
\right).
\end{aligned}
\]
This proves \eqref{eq:positive-mayer-v-2}.

By \eqref{eq:quadratic-amplitude-orthogonality},
\[
\int_\Omega
\sigma^{\frac4{n-2}}PU_jw\,\ud x
=
O(\|w\|^2).
\]
Combining this with \eqref{eq:positive-mayer-v-2}, and using that
$\alpha_j$ is bounded above and below away from zero, we obtain
\be\label{eq:positive-mayer-v-3}
\int_\Omega
(PU_j)^{\frac{n+2}{n-2}}w\,\ud x
=
o\left(
\lambda_j^{-\frac{n-2}{2}}
\right)
+
o\left(
\sum_{i\neq j}\Lambda_{i,j}
\right)
+
O(\|w\|^2).
\ee
Hence,
\be\label{eq:Delta-w-positive}
\begin{aligned}
\int_\Omega
\Delta w\,PU_j\,\ud x
={}&
-\int_\Omega
U_j^{\frac{n+2}{n-2}}w\,\ud x\\
={}&
-\int_\Omega
(PU_j)^{\frac{n+2}{n-2}}w\,\ud x+
\int_\Omega
\left[
(PU_j)^{\frac{n+2}{n-2}}
-
U_j^{\frac{n+2}{n-2}}
\right]w\,\ud x\\
={}&
o\left(
\lambda_j^{-\frac{n-2}{2}}
\right)+
o\left(
\sum_{i\neq j}\Lambda_{i,j}
\right)
+
O(\|w\|^2).
\end{aligned}
\ee
where we used  \eqref{eq:positive-mayer-v-3} and
\eqref{eq:projection-error-zero} in the last equality.

Substituting
\eqref{eq:PUiUj^p}, \eqref{eq:PUiUi^p},
\eqref{eq:amplitude-residual},
\eqref{eq:regular-term-positive},
\eqref{eq:regular-difference-positive}, and
\eqref{eq:Delta-w-positive}
into \eqref{eq:amplitude-equation-positive}, we obtain
\[
\begin{aligned}
\left|
\alpha_j^{\frac{n+2}{n-2}}-\alpha_j
\right|
\leq{}&
C
\left(
\int_\Omega
|\mathcal R_u-r_\infty|^{\frac{2n}{n+2}}
u^{\frac{2n}{n-2}}\,\ud x
\right)^{\frac{n+2}{2n}}\\
&+
C\sum_{k=1}^{\ell}
\lambda_k^{-\frac{n-2}{2}}
+
C\bm\Lambda+
C\|w\|^2
+
C\sum_{k=1}^{\ell}|\alpha_k-1|^2.
\end{aligned}
\]
Summing over \(j\) and absorbing the quadratic amplitude term gives
\be\label{eq:amplitude-preliminary-positive}
\begin{aligned}
\sum_{k=1}^{\ell}|\alpha_k-1|
\leq{}&
C
\left(
\int_\Omega
|\mathcal R_u-r_\infty|^{\frac{2n}{n+2}}
u^{\frac{2n}{n-2}}\,\ud x
\right)^{\frac{n+2}{2n}}+
C\sum_{k=1}^{\ell}
\lambda_k^{-\frac{n-2}{2}}
+
C\bm\Lambda
+
C\|w\|^2.
\end{aligned}
\ee

\medskip

\noindent
\textbf{Step 5. Preliminary estimate of \(w\) when
\(u_\infty>0\).}

\medskip

Write
\[
\sigma
=
\xi_{\bm z}
+
r_\infty^{-\frac{n-2}{4}}
\sum_{k=1}^{\ell}\alpha_kPU_k.
\]
Subtracting the equation satisfied by the components of \(\sigma\)
from the equation for \(u\), and using
\eqref{eq:nonlinear-Taylor}, we obtain
\[
\begin{aligned}
-\Delta w
={}&
(\mathcal R_u-r_\infty)
u^{\frac{n+2}{n-2}}+
\frac{n+2}{n-2}r_\infty
\sigma^{\frac4{n-2}}w\\
&+
O\left(
\sigma^{\frac4{n-2}-\delta(n)}
|w|^{1+\delta(n)}
+
|w|^{\frac{n+2}{n-2}}
\right)\\
&+
\left(
\Delta\xi_{\bm z}
+
r_\infty\xi_{\bm z}^{\frac{n+2}{n-2}}
\right)\\
&+
r_\infty
\left[
\sigma^{\frac{n+2}{n-2}}
-
\xi_{\bm z}^{\frac{n+2}{n-2}}
-
\left(
r_\infty^{-\frac{n-2}{4}}
\sum_{k=1}^{\ell}\alpha_kPU_k
\right)^{\frac{n+2}{n-2}}
\right]\\
&+
r_\infty^{-\frac{n-2}{4}}
\left[
\left(
\sum_{k=1}^{\ell}\alpha_kPU_k
\right)^{\frac{n+2}{n-2}}
-
\sum_{k=1}^{\ell}
\alpha_k(PU_k)^{\frac{n+2}{n-2}}
\right]\\
&+
r_\infty^{-\frac{n-2}{4}}
\sum_{k=1}^{\ell}\alpha_k
\left[
(PU_k)^{\frac{n+2}{n-2}}
-
U_k^{\frac{n+2}{n-2}}
\right].
\end{aligned}
\]
Multiplying by \(w\), integrating, and using coercivity, the
residual, nonlinear, multi-bubble, and projection terms are
estimated as in
\eqref{eq:w-residual-estimate},
\eqref{eq:multi-bubble-w-estimate}, and
\eqref{eq:projection-error-w-zero}.

Since
$
\Pi\left(
\Delta\xi_{\bm z}
+
r_\infty\xi_{\bm z}^{\frac{n+2}{n-2}}
\right)=0,
$
\eqref{eq:Pif} and \eqref{eq:xi_zphi_l} imply
\[
\begin{aligned}
&
\left|
\int_\Omega
\left(
\Delta\xi_{\bm z}
+
r_\infty\xi_{\bm z}^{\frac{n+2}{n-2}}
\right)w\,\ud x
\right|\leq
\frac{c}{8}\|w\|^2
+
C
\left(
\int_\Omega
|\mathcal R_u-r_\infty|^{\frac{2n}{n+2}}
u^{\frac{2n}{n-2}}\,\ud x
\right)^{\frac{n+2}{n}}
+
C\sum_{k=1}^{\ell}\lambda_k^{2-n}.
\end{aligned}
\]

It remains to estimate the regular--bubble interaction. For
$A,B\geq0$ and $p>1$, one has
\[
\left|
(A+B)^p-A^p-B^p
\right|
\leq
C\left(
A^{p-1}B+AB^{p-1}
\right).
\]
Moreover, if $1<p\leq2$, then
\[
\left|
(A+B)^p-A^p-B^p
\right|
\leq
C(AB)^{p/2}.
\]

Suppose first that $3\leq n\leq5$. Applying the first inequality
with
\[
A=\xi_{\bm z},
\qquad
B=
r_\infty^{-\frac{n-2}{4}}
\sum_{k=1}^{\ell}\alpha_kPU_k,
\qquad
p=\frac{n+2}{n-2},
\]
and using the uniform boundedness of $\xi_{\bm z}$, we obtain
\[
\begin{aligned}
&
\left|
\sigma^{\frac{n+2}{n-2}}
-
\xi_{\bm z}^{\frac{n+2}{n-2}}
-
\left(
r_\infty^{-\frac{n-2}{4}}
\sum_{k=1}^{\ell}\alpha_kPU_k
\right)^{\frac{n+2}{n-2}}
\right|\leq
C\sum_{k=1}^{\ell}PU_k
+
C\sum_{k=1}^{\ell}
(PU_k)^{\frac4{n-2}}.
\end{aligned}
\]
Here we used that $\ell$ is fixed and
\[
\left(
\sum_{k=1}^{\ell}PU_k
\right)^{\frac4{n-2}}
\leq
C\sum_{k=1}^{\ell}
(PU_k)^{\frac4{n-2}}.
\]
Direct scaling gives
\[
\left(
\int_\Omega
(PU_k)^{\frac{2n}{n+2}}\,\ud x
\right)^{\frac{n+2}{2n}}
\leq
C\lambda_k^{-\frac{n-2}{2}},
\]
and
\[
\left(
\int_\Omega
(PU_k)^{
\frac{8n}{(n+2)(n-2)}
}\,\ud x
\right)^{\frac{n+2}{2n}}
\leq
C\lambda_k^{-\frac{n-2}{2}}.
\]
It follows that
\[
\begin{aligned}
&
\left\|
\sigma^{\frac{n+2}{n-2}}
-
\xi_{\bm z}^{\frac{n+2}{n-2}}
-
\left(
r_\infty^{-\frac{n-2}{4}}
\sum_{k=1}^{\ell}\alpha_kPU_k
\right)^{\frac{n+2}{n-2}}
\right\|_{L^{\frac{2n}{n+2}}(\Omega)}\leq
C\sum_{k=1}^{\ell}
\lambda_k^{-\frac{n-2}{2}},
\qquad
3\leq n\leq5.
\end{aligned}
\]

Suppose now that $n\geq6$. Since
\[
1<
\frac{n+2}{n-2}
\leq2,
\]
the second algebraic inequality gives
\[
\begin{aligned}
&
\left|
\sigma^{\frac{n+2}{n-2}}
-
\xi_{\bm z}^{\frac{n+2}{n-2}}
-
\left(
r_\infty^{-\frac{n-2}{4}}
\sum_{k=1}^{\ell}\alpha_kPU_k
\right)^{\frac{n+2}{n-2}}
\right|\leq
C
\left[
\xi_{\bm z}
\left(
\sum_{k=1}^{\ell}PU_k
\right)
\right]^{\frac{n+2}{2(n-2)}}.
\end{aligned}
\]
Consequently,
\[
\begin{aligned}
\left\|
\sigma^{\frac{n+2}{n-2}}
-
\xi_{\bm z}^{\frac{n+2}{n-2}}
-
\left(
r_\infty^{-\frac{n-2}{4}}
\sum_{k=1}^{\ell}\alpha_kPU_k
\right)^{\frac{n+2}{n-2}}
\right\|_{L^{\frac{2n}{n+2}}(\Omega)}&\leq
C
\left(
\int_\Omega
\xi_{\bm z}^{\frac n{n-2}}
\left(
\sum_{k=1}^{\ell}PU_k
\right)^{\frac n{n-2}}
\,\ud x
\right)^{\frac{n+2}{2n}}\\
&\quad\leq
C
\left(
\sum_{k=1}^{\ell}
\int_\Omega
(PU_k)^{\frac n{n-2}}\,\ud x
\right)^{\frac{n+2}{2n}}.
\end{aligned}
\]
Since
\[
\begin{aligned}
\int_\Omega
(PU_k)^{\frac n{n-2}}\,\ud x
&\leq
C\lambda_k^{-\frac n2}
\int_0^{C\lambda_k}
\frac{r^{n-1}}
{(1+r^2)^{\frac n2}}\,\ud r\leq
C\lambda_k^{-\frac n2}\ln\lambda_k,
\end{aligned}
\]
we conclude that
\[
\begin{aligned}
&
\left\|
\sigma^{\frac{n+2}{n-2}}
-
\xi_{\bm z}^{\frac{n+2}{n-2}}
-
\left(
r_\infty^{-\frac{n-2}{4}}
\sum_{k=1}^{\ell}\alpha_kPU_k
\right)^{\frac{n+2}{n-2}}
\right\|_{L^{\frac{2n}{n+2}}(\Omega)}\leq
C\sum_{k=1}^{\ell}
\lambda_k^{-\frac{n+2}{4}}
(\ln\lambda_k)^{\frac{n+2}{2n}},
\  n\geq6.
\end{aligned}
\]
This proves 
\[
\begin{aligned}
&
\left\|
\sigma^{\frac{n+2}{n-2}}
-
\xi_{\bm z}^{\frac{n+2}{n-2}}
-
\left(
r_\infty^{-\frac{n-2}{4}}
\sum_{k=1}^{\ell}\alpha_kPU_k
\right)^{\frac{n+2}{n-2}}
\right\|_{L^{\frac{2n}{n+2}}(\Omega)}\leq
\begin{cases}
\displaystyle
C\sum_{k=1}^{\ell}
\lambda_k^{-\frac{n-2}{2}},
\ 3\leq n\leq5,\\[3mm]
\displaystyle
C\sum_{k=1}^{\ell}
\lambda_k^{-\frac{n+2}{4}}
(\ln\lambda_k)^{\frac{n+2}{2n}},
\  n\geq6.
\end{cases}
\end{aligned}
\]

Applying H\"older's and Young's inequalities, we obtain
\[
\begin{aligned}
&\left|
\int_\Omega
\left[
\sigma^{\frac{n+2}{n-2}}
-
\xi_{\bm z}^{\frac{n+2}{n-2}}
-
\left(
r_\infty^{-\frac{n-2}{4}}
\sum_{k=1}^{\ell}\alpha_kPU_k
\right)^{\frac{n+2}{n-2}}
\right]w\,\ud x
\right|\\
&\leq
\frac{c}{8}\|w\|^2+
\begin{cases}
\displaystyle
C\sum_{k=1}^{\ell}\lambda_k^{2-n},
&3\leq n\leq5,\\[3mm]
\displaystyle
C\sum_{k=1}^{\ell}
\lambda_k^{-\frac{n+2}{2}}
(\ln\lambda_k)^{\frac{n+2}{n}},
&n\geq6.
\end{cases}
\end{aligned}
\]

We therefore obtain
\be\label{eq:w-preliminary-positive}
\begin{aligned}
\|w\|^2
\leq{}&
C
\left(
\int_\Omega
|\mathcal R_u-r_\infty|^{\frac{2n}{n+2}}
u^{\frac{2n}{n-2}}\,\ud x
\right)^{\frac{n+2}{n}}+
C\sum_{k=1}^{\ell}|\alpha_k-1|^2\\
&+
\begin{cases}
\displaystyle
C\sum_{k=1}^{\ell}\lambda_k^{2-n}+C\sum_{i\neq j}\Lambda_{i,j}^2,
&3\leq n\leq5,\\[3mm]
\displaystyle
C\sum_{k=1}^{\ell}
\lambda_k^{-\frac{n+2}{2}}
(\ln\lambda_k)^{\frac{n+2}{n}}+C\sum_{i\neq j}
\Lambda_{i,j}^{\frac{n+2}{n-2}}
\left(
\ln\Lambda_{i,j}^{-1}
\right)^{\frac{n+2}{n}},
&n\geq6,
\end{cases}
\end{aligned}
\ee
This, together with
\eqref{eq:amplitude-preliminary-positive}, proves \eqref{thm:w} and \eqref{thm:alpha-1}. 
\medskip

\noindent
\textbf{Step 6. Improved estimates when
\(Y_\Omega(u)\geq r_\infty\).}

\medskip

Suppose first that \(u_\infty\equiv0\). 
Combining \eqref{eq:energy-sigma-alpha} with
\eqref{eq:Ruup<}, and using the already established general
amplitude estimate to control the quadratic amplitude term, we
obtain, under \(Y_\Omega(u)\geq r_\infty\),
\be\label{eq:oneside-1-1-strengthened}
\begin{aligned}
\bm\Lambda+\|w\|^2
\leq{}&
C
\left(
\int_\Omega
|\mathcal R_u-r_\infty|^{\frac{2n}{n+2}}
u^{\frac{2n}{n-2}}\,\ud x
\right)^{\frac{n+2}{n}}+
C\sum_{k=1}^{\ell}\lambda_k^{2-n}.
\end{aligned}
\ee
By substituting \eqref{eq:oneside-1-1-strengthened} into the
preliminary estimates
\eqref{eq:amplitude-preliminary-zero} and \eqref{eq:w-preliminary-zero}, 
we obtain,
\[
\begin{aligned}
\sum_{k=1}^{\ell}|\alpha_k-1|
={}&
O\left(
\left(
\int_\Omega
|\mathcal R_u-r_\infty|^{\frac{2n}{n+2}}
u^{\frac{2n}{n-2}}\,\ud x
\right)^{\frac{n+2}{2n}}
\right)+
O\left(
\sum_{k=1}^{\ell}
\lambda_k^{
2-n}
\right),
\end{aligned}
\]
and
\[
\begin{aligned}
\|w\|^2
={}&
O\left(
\left(
\int_\Omega
|\mathcal R_u-r_\infty|^{\frac{2n}{n+2}}
u^{\frac{2n}{n-2}}\,\ud x
\right)^{\frac{n+2}{n}}
\right)\\
&+
\begin{cases}
\displaystyle
O\left(
\sum_{k=1}^{\ell}\lambda_k^{-n-2}(\ln\lambda_k)^\frac{n+2}{n}
\right),
& n\geq6,\\[2mm]
\displaystyle
O\left(
\sum_{k=1}^{\ell}\lambda_k^{4-2n}
\right),
& n=3,4,5.
\end{cases}
\end{aligned}
\]
These are the one-sided assertions in parts (i) and (ii) for
\(u_\infty\equiv0\).

Suppose now that \(u_\infty>0\). The estimate
\eqref{eq:Bruinfty>0} and the one-sided coercive energy expansion
\eqref{eq:Ruup<} imply, under \(Y_\Omega(u)\geq r_\infty\),
\be\label{eq:oneside-positive-strengthened}
\begin{aligned}
&
\sum_{k=1}^{\ell}
\lambda_k^{-\frac{n-2}{2}}
+
\bm\Lambda
+
\|w\|^2\leq
C
\left(
\int_\Omega
|\mathcal R_u-r_\infty|^{\frac{2n}{n+2}}
u^{\frac{2n}{n-2}}\,\ud x
\right)^{\frac{n+2}{2n}(1+\gamma)}.
\end{aligned}
\ee
Substitution into
\eqref{eq:amplitude-preliminary-positive} gives
\[
\sum_{k=1}^{\ell}|\alpha_k-1|
=
O\left(
\left(
\int_\Omega
|\mathcal R_u-r_\infty|^{\frac{2n}{n+2}}
u^{\frac{2n}{n-2}}\,\ud x
\right)^{\frac{n+2}{2n}}
\right).
\]

Finally, substituting \eqref{eq:oneside-positive-strengthened} into
\eqref{eq:w-preliminary-positive}, 
we obtain \eqref{eq:w4}.

This completes the proof of parts (i) and (ii).
\end{proof}

\begin{proof}[Proof of Theorem \ref{thm:LS}\,(iii)]

This part concerns only the case $u_\infty>0$. Let $L_1$ be the
largest integer such that
\[
\mu_l<\frac{n+2}{n-2}r_\infty,
\qquad 1\leq l\leq L_1.
\]
We set
\[
\bm z'=(z_1,\ldots,z_{L_1}),
\qquad
\bm z''=(z_{L_1+1},\ldots,z_L),
\]
with the convention $\bm z''=0$ if $L_1=L$.

By Lemma \ref{lem:regular-surface-expansions}, the expansions
\eqref{eq:nablaxiz} and \eqref{eq:xiz^p} hold.

Using 
\[
\xi_{\bm z}
=u_\infty+
\sum_{i=1}^{L}z_i\phi_i+h_{\bm z},
\qquad
\|h_{\bm z}\|=O(|\bm z|^2).
\]
and expanding the quotient $Y_\Omega(\xi_{\bm z})$ to second order, including the square of the
linear term in the denominator, gives
\be\label{eq:Y-xiz-quadratic}
\begin{aligned}
Y_\Omega(\xi_{\bm z})
={}&Y_\Omega(u_\infty)+
\frac{
\displaystyle
\sum_{i=2}^{L}
\left(
\mu_i-\frac{n+2}{n-2}r_\infty
\right)z_i^2
}{
\left(
\displaystyle
\int_\Omega u_\infty^{\frac{2n}{n-2}}\,\ud x
\right)^{\frac{n-2}{n}}
}
+o(|\bm z|^2).
\end{aligned}
\ee
For $2\leq i\leq L_1$,
\[
\mu_i-\frac{n+2}{n-2}r_\infty<0,
\]
whereas this coefficient vanishes for $L_1+1\leq i\leq L$.
Therefore, Proposition \ref{prop:regular-part} and
\eqref{eq:Y-xiz-quadratic} imply
\be\label{eq:negative-z-estimate}
\begin{aligned}
\sum_{i=2}^{L_1}z_i^2
={}&
o\left(
 z_1^2+\sum_{i=L_1+1}^{L}z_i^2
\right)+
O\left(
\left(
\int_\Omega
|\mathcal R_u-r_\infty|^{\frac{2n}{n+2}}
 u^{\frac{2n}{n-2}}\,\ud x
\right)^{\frac{n+2}{2n}(1+\gamma)}
\right)\\
&+
O\left(
\sum_{k=1}^{\ell}
\lambda_k^{-\frac{n-2}{2}(1+\gamma)}
\right).
\end{aligned}
\ee

It remains to estimate the scaling parameter $z_1$. Subtracting
$r_\infty$ times \eqref{eq:xiz^p} from \eqref{eq:nablaxiz} and
using Lemma \ref{lem:regular-energy-balance}, we obtain
\be\label{eq:z1-balance}
\begin{aligned}
&\frac4{n-2}r_\infty
\left(
\int_\Omega u_\infty^{\frac{2n}{n-2}}\,\ud x
\right)^{1/2}z_1+
\sum_{i=1}^{L}
\left(
\frac{n(n+2)}{(n-2)^2}r_\infty-\mu_i
\right)z_i^2\\
&=
O\left(
\left(
\int_\Omega
|\mathcal R_u-r_\infty|^{\frac{2n}{n+2}}
 u^{\frac{2n}{n-2}}\,\ud x
\right)^{\frac{n+2}{2n}}
\right)
+O\left(
\sum_{k=1}^{\ell}\lambda_k^{-\frac{n-2}{2}}
\right)
+o(|\bm z|^2).
\end{aligned}
\ee
Since $|\bm z|$ is small, the term involving $z_1^2$ on the
right-hand side of the resulting estimate for $|z_1|$ can be
absorbed. Combining \eqref{eq:z1-balance} with
\eqref{eq:negative-z-estimate} yields
\be\label{eq:z1-estimate}
\begin{aligned}
|z_1|
\leq{}&
C
\left(
\int_\Omega
|\mathcal R_u-r_\infty|^{\frac{2n}{n+2}}
u^{\frac{2n}{n-2}}\,\ud x
\right)^{\frac{n+2}{2n}}+
C\sum_{k=1}^{\ell}
\lambda_k^{-\frac{n-2}{2}}
+
C\sum_{i=L_1+1}^{L}z_i^2.
\end{aligned}
\ee
Here we used $1+\gamma<2$ and the smallness of all quantities.
Adding \eqref{eq:z1-estimate} to
\eqref{eq:negative-z-estimate} proves
\[
\begin{aligned}
|\bm z'|^2
={}&
o(|\bm z''|^2)+
O\left(
\left(
\int_\Omega
|\mathcal R_u-r_\infty|^{\frac{2n}{n+2}}
 u^{\frac{2n}{n-2}}\,\ud x
\right)^{\frac{n+2}{2n}(1+\gamma)}
\right)+
O\left(
\sum_{k=1}^{\ell}
\lambda_k^{-\frac{n-2}{2}(1+\gamma)}
\right).
\end{aligned}
\]
The little-$o$ term is uniform in the regime defining
$\V(u_\infty,\ell,\delta;\va)$ as $\va\to0$.

Under the additional assumption $Y_\Omega(u)\geq r_\infty$,
\eqref{eq:oneside-positive-strengthened} gives
\[
\sum_{k=1}^{\ell}\lambda_k^{-\frac{n-2}{2}}
\leq
C\left(
\int_\Omega
|\mathcal R_u-r_\infty|^{\frac{2n}{n+2}}
 u^{\frac{2n}{n-2}}\,\ud x
\right)^{\frac{n+2}{2n}(1+\gamma)}.
\]
The scale term above can therefore be absorbed into the residual
term, which proves the final assertion of part~(iii).
\end{proof}

\section{Energy-gap consequences}\label{sec:energy-gap}

We now combine the preliminary identities from
Section~\ref{sec:modulation} with Theorem~\ref{thm:LS}.

\subsection{The case without a regular component}

\begin{thm}\label{prop:energ-expand-a}
Suppose that $u_\infty\equiv0$. There exist $\va_1\in(0,1)$ and
$C>0$ such that, for every $\va\in(0,\va_1)$ and every
$
u\in\mathcal D\cap\V(0,\ell,\delta;\va)
$
satisfying
$
\|\mathcal R_u-r_\infty\|_{C^0(\Omega)}<\va
$
and
$
\int_\Omega u^{\frac{2n}{n-2}}\,\ud x=1,
$
we have
\[
\begin{aligned}
&-C\mathfrak G(u)^2
-C\bm\Lambda
-C\sum_{k=1}^{\ell}\lambda_k^{2-n}\leq
Y_\Omega(u)-r_\infty\leq
C
\mathfrak G(u)^2
+C\sum_{k=1}^{\ell}\lambda_k^{2-n},
\end{aligned}
\]
where $\mathfrak G(u)$ is defined in \eqref{eq:Gu}. If, in addition, $Y_\Omega(u)\geq r_\infty$, then
\be\label{eq:oneside-1-1}
\begin{aligned}
\bm\Lambda + \|w\|^2
\leq{}&
C\mathfrak G(u)^2
+C\sum_{k=1}^{\ell}\lambda_k^{2-n}.
\end{aligned}
\ee
\end{thm}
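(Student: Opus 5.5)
The plan is to combine the two preliminary energy identities \eqref{eq:Ruup} and \eqref{eq:Ruup<} with the bubble energy expansion of Proposition~\ref{prop:quantif-1} and the estimates of Theorem~\ref{thm:LS}. When $u_\infty\equiv0$ we have $\sigma=r_\infty^{-\frac{n-2}{4}}\sum_i\alpha_iPU_i$, so \eqref{eq:energy-sigma-alpha} gives
\[
Y_\Omega(\sigma)-r_\infty=-C_1c_0r_\infty^{-\frac{n-2}{2}}\sum_{i\neq j}\Lambda_{i,j}+o(\bm\Lambda)+O\Big(\sum_k|\alpha_k-1|^2\Big)+O\Big(\sum_k\lambda_k^{2-n}\Big).
\]
Moreover, \eqref{eq:estsigma} (whose bubble part applies verbatim here) shows that $\int_\Omega\sigma^{\frac{2n}{n-2}}\,\ud x=1+o(1)$. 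The factor $\big(\int_\Omega\sigma^{\frac{2n}{n-2}}\big)^{\frac{n-2}{n}}$ is therefore bounded above and below by positive constants.

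\emph{Lower bound.} Insert the expansion into \eqref{eq:Ruup}. This yields $Y_\Omega(u)-r_\infty\geq -C\bm\Lambda-C\sum_k|\alpha_k-1|^2-C\sum_k\lambda_k^{2-n}-C\mathfrak G(u)^2-C\|w\|^2$. Next apply \eqref{thm:alpha-1-0} and \eqref{thm:w-0}. Squaring them gives $\sum|\alpha_k-1|^2+\|w\|^2\leq C\mathfrak G(u)^2+C\bm\Lambda^2+C\sum\lambda_k^{2(2-n)}+(\text{smaller interaction and scale terms})$. Every squared interaction or scale term is bounded by $C\bm\Lambda$ or $C\sum\lambda_k^{2-n}$ because these quantities are small. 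For $n\ge 6$, for example, $\Lambda^{\frac{n+2}{n-2}}(\ln\Lambda^{-1})^{\frac{n+2}{n}}\leq\Lambda$ and $\lambda^{-n-2}(\ln\lambda)^{\frac{n+2}{n}}\leq\lambda^{2-n}$. This gives the lower bound.

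\emph{Upper bound.} Use \eqref{eq:Ruup<}, which already has the favorable sign $-c\|w\|^2$. The interaction term $-C_1c_0r_\infty^{-\frac{n-2}{2}}\sum\Lambda_{i,j}+o(\bm\Lambda)$ is $\leq -c\bm\Lambda$ once $\va_1$ is small. Multiplied by the positive mass factor, it stays $\leq -c\bm\Lambda$. We obtain
\[
Y_\Omega(u)-r_\infty+c\bm\Lambda+c\|w\|^2\leq C\mathfrak G(u)^2+C\sum_k|\alpha_k-1|^2+C\sum_k\lambda_k^{2-n}.
\]
The remaining task is the quadratic amplitude term, which I expect to be the main obstacle. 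By \eqref{thm:alpha-1-0}, $\sum|\alpha_k-1|^2\leq C\mathfrak G(u)^2+C\sum\lambda_k^{2(2-n)}+C\bm\Lambda^2$. Since $\bm\Lambda$ is small, $C\bm\Lambda^2\leq\frac c2\bm\Lambda$, and this piece is absorbed into the left-hand side. It is essential here to use the preliminary amplitude estimate with $\bm\Lambda$ appearing only to first order, not $\|w\|$, so that no circularity arises. Dropping the nonnegative terms $\frac c2\bm\Lambda+c\|w\|^2$ then gives the upper bound.

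\emph{One-sided estimate.} If $Y_\Omega(u)\geq r_\infty$, the same displayed inequality, after the same absorption, gives $\frac c2\bm\Lambda+c\|w\|^2\leq C\mathfrak G(u)^2+C\sum\lambda_k^{2-n}$, which is \eqref{eq:oneside-1-1}. This is consistent with \eqref{eq:oneside-1-1-strengthened} in Step~6.
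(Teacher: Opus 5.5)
Your proposal is correct and follows the paper's proof essentially step by step. You expand $Y_\Omega(\sigma)-r_\infty$ via Proposition~\ref{prop:quantif-1}, control the squared amplitude term by \eqref{thm:alpha-1-0}, and use that the mass factor is $1+o(1)$. You then insert this into \eqref{eq:Ruup} together with \eqref{thm:w-0} for the lower bound, and into \eqref{eq:Ruup<} for the upper bound and \eqref{eq:oneside-1-1}.

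The one quibble is terminology. \eqref{thm:alpha-1-0} is the final amplitude estimate, not a preliminary one. Even the preliminary version containing $C\|w\|^2$ would create no circularity, since the resulting $\|w\|^4$ is absorbed by $c\|w\|^2$.
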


\begin{proof}
By Proposition~\ref{prop:quantif-1} and
\eqref{thm:alpha-1-0},
\[
\begin{aligned}
Y_\Omega(\sigma)-r_\infty
={}&
-C_1c_0r_\infty^{-\frac{n-2}{2}}\bm\Lambda+
O\left(\mathfrak G(u)^2\right)+o(\bm\Lambda)
+O\left(\sum_{k=1}^{\ell}\lambda_k^{2-n}\right).
\end{aligned}
\]
Moreover, \eqref{eq:mass-comparison} and Theorem~\ref{thm:LS}(ii) shows that \eqref{eq:estsigma} hold. Substituting into \eqref{eq:Ruup}, together with
\eqref{thm:w-0}, gives the lower bound of $Y_\Omega(u)$. Applying \eqref{eq:Ruup<} therefore gives
\be\label{eq:Yuupper}
\begin{aligned}
Y_\Omega(u)
\leq{}&r_\infty
+C\mathfrak G(u)^2+C\sum_{k=1}^{\ell}\lambda_k^{2-n}
-c\bm\Lambda-c\|w\|^2.
\end{aligned}
\ee
This proves the upper bound. If $Y_\Omega(u)\geq r_\infty$, then \eqref{eq:Yuupper} immediately yields \eqref{eq:oneside-1-1}.
\end{proof}

\subsection{The case with a regular component}

\begin{prop}\label{pro:Yom(u)}
Suppose that $u_\infty>0$. Under the assumptions of
Theorem~\ref{thm:LS},
\[
\begin{aligned}
& |Y_\Omega(\sigma)-r_\infty|\\
&=
O
\left( \mathfrak G(u)
\right)+
\begin{cases}
\displaystyle
O\left(\sum_{k=1}^{\ell}
\lambda_k^{-\frac{n+2}{4}}
(\ln\lambda_k)^{\frac{n+2}{2n}}\right)+O\left(\sum_{i\neq j}
\Lambda_{i,j}^{\frac{n+2}{2(n-2)}}
\left(
\ln\Lambda_{i,j}^{-1}
\right)^{\frac{n+2}{2n}}\right),
&n\geq6,\\[3mm]
\displaystyle
O\left(\sum_{k=1}^{\ell}\lambda_k^{\frac{2-n}{2}}\right)+O\left(\sum_{i\neq j}\Lambda_{i,j}\right),
&n=3,4,5,
\end{cases}
\end{aligned}
\]
\end{prop}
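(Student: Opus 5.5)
The plan is to compute $Y_\Omega(\sigma)-r_\infty$ directly from the approximate Euler--Lagrange equation satisfied by $\sigma$. The key point is that the right-hand side of the statement coincides with the bound for $\|w\|$ in \eqref{thm:w}.

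\textbf{Step 1: error equation and reduction.} Write
\[
E:=\Delta\sigma+r_\infty\sigma^{\frac{n+2}{n-2}} .
\]
Testing against $\sigma$ gives $\int_\Omega|\nabla\sigma|^2\,\ud x=r_\infty M-\int_\Omega E\sigma\,\ud x$, where $M:=\int_\Omega\sigma^{\frac{2n}{n-2}}\,\ud x$. Hence
\[
Y_\Omega(\sigma)-r_\infty=r_\infty\big(M^{\frac2n}-1\big)-M^{-\frac{n-2}{n}}\int_\Omega E\sigma\,\ud x .
\]
By \eqref{eq:estsigma}, $M$ is bounded above and below. By \eqref{eq:mass-comparison} and Sobolev's inequality, $|M^{2/n}-1|\le C\|w\|$. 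The first term is therefore controlled by \eqref{thm:w}, which is exactly the asserted right-hand side.

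\textbf{Step 2: splitting $\int_\Omega E\sigma$.} Since $\sigma$ is bounded in $L^{\frac{2n}{n-2}}(\Omega)$, it suffices by H\"older to bound $\|E\|_{L^{2n/(n+2)}}$, term by term. I decompose $E$ as in Step 5 of the proof of Theorem~\ref{thm:LS}.
\begin{itemize}
\item[(a)] The regular defect $\Delta\xi_{\bm z}+r_\infty\xi_{\bm z}^{\frac{n+2}{n-2}}$. By \eqref{eq:projection0}, \eqref{eq:Pif} and \eqref{eq:xi_zphi_l}, it is $O(\mathfrak G(u))+O\big(\sum_k\lambda_k^{-\frac{n-2}{2}}\big)$ in any $L^q$.
\item[(b)] The regular--bubble cross term $\sigma^{\frac{n+2}{n-2}}-\xi_{\bm z}^{\frac{n+2}{n-2}}-\big(r_\infty^{-\frac{n-2}{4}}\sum_k\alpha_kPU_k\big)^{\frac{n+2}{n-2}}$. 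Its $L^{2n/(n+2)}$ norm was bounded in Step 5 by $\sum_k\lambda_k^{-\frac{n-2}{2}}$ for $n\le5$, and by $\sum_k\lambda_k^{-\frac{n+2}{4}}(\ln\lambda_k)^{\frac{n+2}{2n}}$ for $n\ge6$.
\item[(c)] The amplitude part $\sum_k(\alpha_k^{\frac{n+2}{n-2}}-\alpha_k)(PU_k)^{\frac{n+2}{n-2}}$. It is $O\big(\sum_k|\alpha_k-1|\big)$, and I will invoke \eqref{thm:alpha-1} to bound it.
\item[(d)] The pure interaction $\big(\sum_k\alpha_kPU_k\big)^{\frac{n+2}{n-2}}-\sum_k(\alpha_kPU_k)^{\frac{n+2}{n-2}}$. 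Proposition B.2 of \cite{FG2020} bounds it by $\Lambda_{i,j}$ for $n\le5$, and by $\Lambda_{i,j}^{\frac{n+2}{2(n-2)}}(\ln\Lambda_{i,j}^{-1})^{\frac{n+2}{2n}}$ for $n\ge6$, as in \eqref{eq:multi-bubble-w-estimate}.
\item[(e)] The projection error $(PU_k)^{\frac{n+2}{n-2}}-U_k^{\frac{n+2}{n-2}}$. By \eqref{eq:Uj-PUj-power} it is bounded by $C\lambda_k^{-\frac{n-2}{2}}U_k^{\frac4{n-2}}$, which as in \eqref{eq:projection-error-zero} is even smaller.
\end{itemize}

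\textbf{Step 3: comparing exponents.} The last step checks that every error produced above is dominated by the stated right-hand side.
\begin{itemize}
\item For $n\ge6$, $\lambda_k^{-\frac{n-2}{2}}\le\lambda_k^{-\frac{n+2}{4}}$, since $\frac{n-2}{2}\ge\frac{n+2}{4}$ exactly when $n\ge6$.
\item For $n\ge6$, $\Lambda_{i,j}\le\Lambda_{i,j}^{\frac{n+2}{2(n-2)}}$, since $\frac{n+2}{2(n-2)}\le1$ there.
\item For $n=3,4,5$, the stated bounds $\lambda_k^{\frac{2-n}{2}}$ and $\Lambda_{i,j}$ already appear directly.
\end{itemize}
In particular, the terms $\sum_k\lambda_k^{-\frac{n-2}{2}}+\bm\Lambda$ coming from \eqref{thm:alpha-1} in (c) and from the regular defect in (a) are admissible in every dimension.

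\textbf{Main obstacle.} I expect the real difficulty to be this bookkeeping: confirming that no term in $E$ is only controlled in a weaker norm, or with a worse power, than allowed. The likeliest problem spot is the regular--bubble cross term (b) for $n\ge6$, where the logarithmic factor appears. If the direct $L^{2n/(n+2)}$ bound for it were insufficient, I would instead test $E$ against $\sigma$ separately on the sets $D$ and $D^c$ used in Step 4, exploiting $\sigma\approx\xi_{\bm z}$ on $D$.
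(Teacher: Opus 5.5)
Your argument is correct and shares the paper's skeleton. Your Step~1 identity is exactly the paper's identity (14), because $\int_\Omega|\nabla\sigma|^2\,\ud x-r_\infty M=-\int_\Omega E\sigma\,\ud x$. Both proofs also control the mass defect $r_\infty(M^{2/n}-1)$ through \eqref{eq:mass-comparison} and \eqref{thm:w}.

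The difference is in how the energy-balance term is bounded. The paper expands $\int_\Omega|\nabla\sigma|^2\,\ud x$ and $\int_\Omega\sigma^{\frac{2n}{n-2}}\,\ud x$ separately:
\begin{itemize}
\item it disposes of the regular--bubble cross terms as $O(\lambda_k^{-\frac{n-2}{2}})$ by integrating them against the bounded $\xi_{\bm z}$, not by taking dual norms;
\item it uses the multi-bubble expansions \eqref{eq:sigma1} and \eqref{eq:sigma22} for the pure-bubble part;
\item it uses Lemma~\ref{lem:regular-energy-balance} for the regular part.
\end{itemize}
This yields the intermediate estimate \eqref{eq:sigma-energy-balance-final}, which is linear in $\bm\Lambda$ and in $\lambda_k^{-\frac{n-2}{2}}$.

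You instead bound the Euler--Lagrange residual $E$ of $\sigma$ in $L^{\frac{2n}{n+2}}$ by recycling the Step~5 decomposition. Your rates for (a)--(e) are right. For $n\ge6$ this is cruder: the dual norms of the regular--bubble cross term and of the bubble interaction only give $\lambda_k^{-\frac{n+2}{4}}(\ln\lambda_k)^{\frac{n+2}{2n}}$ and $\Lambda_{i,j}^{\frac{n+2}{2(n-2)}}(\ln\Lambda_{i,j}^{-1})^{\frac{n+2}{2n}}$. The mass defect already costs exactly these rates through \eqref{thm:w}, so nothing is lost in the final statement. The fallback in your ``main obstacle'' paragraph is therefore unnecessary.

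Your route is shorter, since it needs no energy expansion of $\sum_k\alpha_kPU_k$. The paper's route buys a sharper, independently useful bound on $\int_\Omega|\nabla\sigma|^2\,\ud x-r_\infty\int_\Omega\sigma^{\frac{2n}{n-2}}\,\ud x$.
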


\begin{proof}
We first estimate
\[
\int_\Omega|\nabla\sigma|^2\,\ud x
-
r_\infty
\int_\Omega\sigma^{\frac{2n}{n-2}}\,\ud x.
\]
Recall that
\[
\sigma
=
\xi_{\bm z}
+
r_\infty^{-\frac{n-2}{4}}
\sum_{i=1}^{\ell}\alpha_iPU_i.
\]

We shall prove that
\be\label{eq:sigma-energy-balance-positive}
\begin{aligned}
&
\left|
\int_\Omega|\nabla\sigma|^2\,\ud x
-
r_\infty
\int_\Omega\sigma^{\frac{2n}{n-2}}\,\ud x
\right|\\
&\quad\leq
\left|
\int_\Omega|\nabla\xi_{\bm z}|^2\,\ud x
-
r_\infty
\int_\Omega\xi_{\bm z}^{\frac{2n}{n-2}}\,\ud x
\right|+
C\sum_{i=1}^{\ell}|\alpha_i-1|
+
C\bm\Lambda
+
C\sum_{i=1}^{\ell}
\lambda_i^{-\frac{n-2}{2}}.
\end{aligned}
\ee

We first expand the gradient term. Since
\[
-\Delta PU_i
=
U_i^{\frac{n+2}{n-2}}
\qquad\mbox{in }\Omega,
\]
integration by parts gives
\[
\begin{aligned}
\int_\Omega|\nabla\sigma|^2\,\ud x
=&\int_{\Omega}|\nabla \xi_{\boldsymbol{z}}|^2 \,\ud x+2r_\infty^{-\frac{n-2}{4}}
\sum_{i=1}^{\ell}\alpha_i
\int_\Omega
\xi_{\bm z}U_i^{\frac{n+2}{n-2}}\,\ud x +r_{\infty}^{-\frac{n-2}{2}}\int_{\Omega}\Big|\nabla\Big(\sum_{k=1}^{\ell} \alpha_k P U_k\Big)\Big|^2 \,\ud x\\
\end{aligned}
\]

Since $\xi_{\bm z}$ is uniformly bounded and
\[
\int_\Omega
U_i^{\frac{n+2}{n-2}}\,\ud x
=
O\left(
\lambda_i^{-\frac{n-2}{2}}
\right),
\]
we have
\[
\left|
\int_\Omega
\xi_{\bm z}U_i^{\frac{n+2}{n-2}}\,\ud x
\right|
\leq
C\lambda_i^{-\frac{n-2}{2}}.
\]

We next expand the mass. For $a,b\geq0$, one has
\[
\left|
(a+b)^{\frac{2n}{n-2}}
-a^{\frac{2n}{n-2}}
-b^{\frac{2n}{n-2}}
\right|
\leq
C\left(
a^{\frac{n+2}{n-2}}b
+
ab^{\frac{n+2}{n-2}}
\right).
\]
Applying this inequality with
\[
a=\xi_{\bm z},
\qquad
b=
r_\infty^{-\frac{n-2}{4}}
\sum_{i=1}^{\ell}\alpha_iPU_i,
\]
we obtain
\[
\begin{aligned}
&
\left|
\int_\Omega\sigma^{\frac{2n}{n-2}}\,\ud x
-
\int_\Omega\xi_{\bm z}^{\frac{2n}{n-2}}\,\ud x
-
r_\infty^{-\frac n2}
\int_\Omega
\left(
\sum_{i=1}^{\ell}\alpha_iPU_i
\right)^{\frac{2n}{n-2}}
\,\ud x
\right|\\
&\quad\leq
C\sum_{i=1}^{\ell}
\int_\Omega
\xi_{\bm z}^{\frac{n+2}{n-2}}PU_i\,\ud x+
C\int_\Omega
\xi_{\bm z}
\left(
\sum_{i=1}^{\ell}PU_i
\right)^{\frac{n+2}{n-2}}
\,\ud x\\
&\quad\le C\sum_{i=1}^{\ell}\int_\Omega PU_i\,\ud x\leq+C\sum_{i=1}^{\ell}
\int_\Omega
(PU_i)^{\frac{n+2}{n-2}}\,\ud x\\
&\quad \leq
C\sum_{i=1}^{\ell}
\lambda_i^{-\frac{n-2}{2}}.
\end{aligned}
\]
Hence,
\[
\begin{aligned}
&
\int_\Omega|\nabla\sigma|^2\,\ud x
-
r_\infty
\int_\Omega\sigma^{\frac{2n}{n-2}}\,\ud x\\
={}&
\int_\Omega|\nabla\xi_{\bm z}|^2\,\ud x
-
r_\infty
\int_\Omega\xi_{\bm z}^{\frac{2n}{n-2}}\,\ud x\\
&+
r_{\infty}^{-\frac{n-2}{2}}\int_{\Omega}\Big|\nabla\Big(\sum_{k=1}^{\ell} \alpha_k P U_k\Big)\Big|^2 \,\ud x-r_\infty^{-\frac{n-2}{2}}\int_\Omega
\left(
\sum_{i=1}^{\ell}\alpha_iPU_i
\right)^{\frac{2n}{n-2}}
\,\ud x
\\
&+
O\left(
\sum_{i=1}^{\ell}
\lambda_i^{-\frac{n-2}{2}}
\right)
+
O(\bm\Lambda).
\end{aligned}
\]
By \eqref{eq:sigma1} and \eqref{eq:sigma22}, we have
\[
\begin{aligned}
&\int_{\Omega}\Big|\nabla\Big(\sum_{k=1}^{\ell} \alpha_k P U_k\Big)\Big|^2 \,\ud x-\int_\Omega
\left(
\sum_{i=1}^{\ell}\alpha_iPU_i
\right)^{\frac{2n}{n-2}}
\,\ud x\\
&=C_2\sum_{i=1}^{\ell}\left(\alpha_i^2-\alpha_i^\frac{2n}{n-2}\right)+
O\left(
\sum_{i=1}^{\ell}
\lambda_i^{2-n}
\right)
+
O(\bm\Lambda).
\end{aligned}
\]
This proves \eqref{eq:sigma-energy-balance-positive}. Using Lemma \ref{lem:regular-energy-balance} and Theorem \ref{thm:LS}(i), we conclude that
\be\label{eq:sigma-energy-balance-final}
\begin{aligned}
&
\left|
\int_\Omega|\nabla\sigma|^2\,\ud x
-
r_\infty
\int_\Omega\sigma^{\frac{2n}{n-2}}\,\ud x
\right|\\
&\quad\leq
C
\left(
\int_\Omega
|\mathcal R_u-r_\infty|^{\frac{2n}{n+2}}
u^{\frac{2n}{n-2}}\,\ud x
\right)^{\frac{n+2}{2n}}+
C\bm\Lambda
+
C\sum_{i=1}^{\ell}
\lambda_i^{-\frac{n-2}{2}}.
\end{aligned}
\ee

We next convert \eqref{eq:sigma-energy-balance-final} into an
estimate of the Yamabe quotient. Using the exact identity
\[
\begin{aligned}
Y_\Omega(\sigma)-r_\infty
={}&
\frac{
\displaystyle
\int_\Omega|\nabla\sigma|^2\,\ud x
-
r_\infty
\displaystyle\int_\Omega
\sigma^{\frac{2n}{n-2}}\,\ud x
}{
\left(
\displaystyle\int_\Omega
\sigma^{\frac{2n}{n-2}}\,\ud x
\right)^{\frac{n-2}{n}}
}+
r_\infty
\left[
\left(
\int_\Omega
\sigma^{\frac{2n}{n-2}}\,\ud x
\right)^{\frac2n}
-1
\right],
\end{aligned}
\tag{14}
\]
Together with \eqref{eq:mass-comparison} and Theorem \ref{thm:LS}(ii),  this completes the proof the proposition.
\end{proof}

\begin{thm}\label{prop:energ-expand-b}
Suppose that $u_\infty>0$. There exist $\va_1\in(0,1)$ and $C>0$
such that, for every $\va\in(0,\va_1)$ and every
$
u\in\mathcal D\cap\V(u_\infty,\ell,\delta;\va)
$
satisfying
$
\|\mathcal R_u-r_\infty\|_{C^0(\Omega)}<\va
$
and
$
\int_\Omega u^{\frac{2n}{n-2}}\,\ud x=1,
$
we have
\[
\begin{aligned}
&Y_\Omega(u)-r_\infty\\
&\geq
-C\mathfrak G(u)-
\begin{cases}
\displaystyle
C\sum_{k=1}^{\ell}
\lambda_k^{-\frac{n+2}{4}}
(\ln\lambda_k)^{\frac{n+2}{2n}}+C\sum_{i\neq j}
\Lambda_{i,j}^{\frac{n+2}{2(n-2)}}
\left(
\ln\Lambda_{i,j}^{-1}
\right)^{\frac{n+2}{2n}},
&n\geq6,\\[3mm]
\displaystyle
C\sum_{k=1}^{\ell}\lambda_k^{\frac{2-n}{2}}+C\sum_{i\neq j}\Lambda_{i,j},
&n=3,4,5,
\end{cases}
\end{aligned}
\]
and
\[
Y_\Omega(u)-r_\infty
\leq
C\mathfrak G(u)^{1+\gamma}.
\]
If, in addition, $Y_\Omega(u)\geq r_\infty$, then
\be\label{eq:oneside-2}
\begin{aligned}
\sum_{k=1}^{\ell}\lambda_k^{-\frac{n-2}{2}}
+\bm\Lambda+\|w\|^2
\leq
C\mathfrak G(u)^{1+\gamma}.
\end{aligned}
\ee
\end{thm}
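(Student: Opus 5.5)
The proof combines the preliminary identities \eqref{eq:Ruup} and \eqref{eq:Ruup<} with Proposition~\ref{pro:Yom(u)}, the estimate \eqref{eq:Bruinfty>0}, and Theorem~\ref{thm:LS}. Throughout, the weighted residual integral appearing in those identities is rewritten as $\mathfrak G(u)$ via \eqref{eq:Gu}. For instance, the error term in \eqref{eq:Ruup} becomes $O(\mathfrak G(u)^2)$, and the right side of \eqref{eq:Bruinfty>0} becomes $C\mathfrak G(u)^{1+\gamma}$.

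\emph{Lower bound.} Start from \eqref{eq:Ruup}:
\[
Y_\Omega(u)-r_\infty=(Y_\Omega(\sigma)-r_\infty)\Big(\int_\Omega\sigma^{\frac{2n}{n-2}}\Big)^{\frac{n-2}{n}}+O(\mathfrak G(u)^2)+O(\|w\|^2).
\]
By \eqref{eq:mass-comparison} and the smallness of $w$, the mass factor is bounded above and below. Proposition~\ref{pro:Yom(u)} bounds $|Y_\Omega(\sigma)-r_\infty|$ by $C\mathfrak G(u)$ plus exactly the dimension-dependent bubble terms in the statement. Estimate $\|w\|^2$ by squaring \eqref{thm:w}. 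Every term produced this way is dominated by the first powers already present, because all quantities are small: $\mathfrak G(u)^2\le\mathfrak G(u)$, and the squared bubble terms are smaller than their linear counterparts. This gives the lower bound with no further work.

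\emph{Upper bound.} Use \eqref{eq:Ruup<}. Since $u_\infty>0$, the estimate \eqref{eq:Bruinfty>0} gives
\[
Y_\Omega(\sigma)-r_\infty\le C\mathfrak G(u)^{1+\gamma}-c\sum_k\lambda_k^{-\frac{n-2}{2}}-c\sum_{i<j}\Lambda_{i,j}.
\]
Multiply by the mass factor, which is close to $1$. If the right side is negative, the product is still at most $C\mathfrak G(u)^{1+\gamma}-c'(\cdots)$ with a slightly smaller $c'$, so in either case the bound holds. Substituting into \eqref{eq:Ruup<} yields
\[
Y_\Omega(u)-r_\infty\le C\mathfrak G(u)^{1+\gamma}+C\mathfrak G(u)^2-c\sum_k\lambda_k^{-\frac{n-2}{2}}-c\bm\Lambda-c\|w\|^2 .
\]
Since $1+\gamma\le2$ and $\mathfrak G(u)$ is small (it is at most $C\va$ by the $C^0$ assumption and the normalization), $\mathfrak G(u)^2\le\mathfrak G(u)^{1+\gamma}$. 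Dropping the negative terms gives the upper bound.

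\emph{Consequence \eqref{eq:oneside-2}.} If $Y_\Omega(u)\ge r_\infty$, the left side of the last display is nonnegative. Rearranging then gives
\[
c\Big(\sum_k\lambda_k^{-\frac{n-2}{2}}+\bm\Lambda+\|w\|^2\Big)\le C\mathfrak G(u)^{1+\gamma},
\]
noting that $\sum_{i<j}$ and $\sum_{i\ne j}$ differ by a factor of $2$. This is the same as \eqref{eq:oneside-positive-strengthened}, which was already derived inside the proof of Theorem~\ref{thm:LS}, so it can simply be cited.

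The only delicate point is the sign handling when multiplying \eqref{eq:Bruinfty>0} by the mass factor $(\int\sigma^{2n/(n-2)})^{(n-2)/n}$. Because this factor equals $1+o(1)$ by \eqref{eq:estsigma}, the negative deficit terms survive with a smaller constant, which is what \eqref{eq:oneside-2} requires. Everything else is routine bookkeeping.
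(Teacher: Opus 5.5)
Your proposal is correct and follows the paper's proof. The lower bound comes from inserting Proposition~\ref{pro:Yom(u)} into \eqref{eq:Ruup}, where you make explicit the step the paper leaves implicit, namely controlling $\|w\|^2$ via \eqref{thm:w}; the upper bound and \eqref{eq:oneside-2} come from combining \eqref{eq:Bruinfty>0} with \eqref{eq:Ruup<}, with the mass factor bounded above and below, and since that factor is positive and bounded between two constants the multiplication can be done termwise, so the sign issue you flag is harmless.
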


\begin{proof}
The lower bound follows from Proposition~\ref{pro:Yom(u)}. For the upper bound,
combine \eqref{eq:Bruinfty>0} with \eqref{eq:Ruup<}; after
shrinking $\va_0$, the mass factor in \eqref{eq:Ruup<} is bounded
above and below away from zero. This gives
\[
\begin{aligned}
Y_\Omega(u)
\leq{}&r_\infty
+C
\left(
\int_\Omega
|\mathcal R_u-r_\infty|^{\frac{2n}{n+2}}
 u^{\frac{2n}{n-2}}\,\ud x
\right)^{\frac{n+2}{2n}(1+\gamma)}-c\sum_{k=1}^{\ell}\lambda_k^{-\frac{n-2}{2}}
-c\bm\Lambda-c\|w\|^2.
\end{aligned}
\]
The upper bound follows immediately. Under the
additional assumption $Y_\Omega(u)\geq r_\infty$, the same
inequality gives \eqref{eq:oneside-2}.
\end{proof}

\bigskip

\noindent T. Jin

\noindent Department of Mathematics, The Hong Kong University of Science and Technology\\
Clear Water Bay, Kowloon, Hong Kong.  ~  \textsf{Email: tianlingjin@ust.hk}

\bigskip

\noindent J. Xiong

\noindent School of Mathematical Sciences, Beijing Normal University\\
Beijing 100875, China. ~ \textsf{Email: jx@bnu.edu.cn}

\bigskip

\noindent N. Zhou

\noindent Department of Mathematics and LPMC, Nankai University\\
Tianjin 300071, China. ~ \textsf{Email: zhouning@nankai.edu.cn}

\end{document}